\apptocmd{\sloppy}{\hbadness 10000\relax}{}{}
\definecolor{hot}{RGB}{65,105,225}
\numberwithin{equation}{subsection} 
\theoremstyle{plain}
\newtheorem{theorem}{Theorem}[section]
\newtheorem{proposition}[theorem]{Proposition}
\newtheorem{prop}[theorem]{Proposition}
\newtheorem{lemma}[theorem]{Lemma}
\newtheorem{corollary}[theorem]{Corollary}
\newtheorem{cor}[theorem]{Corollary}
\theoremstyle{definition}
\newtheorem{definition}[theorem]{Definition}
\newtheorem{notation}[theorem]{Notation}
\newtheorem{defn}[theorem]{Definition}
\newtheorem{example}[theorem]{Example}
\newtheorem{rmk}[theorem]{Remark}
\theoremstyle{remark}
\newcommand{\bigslant}[2]{{\raisebox{.2em}{$#1$}\left/\raisebox{-.2em}{$#2$}\right.}} 
\newcommand{\ubul}{{\,\begin{picture}(-1,1)(-1,-3)\circle*{2}\end{picture}\ }}
\newcommand{\ra}{\rightarrow}
\DeclareMathOperator{\spec}{Spec}
\DeclareMathOperator{\Hom}{Hom}
\DeclareMathOperator{\End}{End}
\DeclareMathOperator{\MC}{MC}
\DeclareMathOperator{\Def}{Def}
\DeclareMathOperator{\art}{\texttt{\emph{Art}}}
\DeclareMathOperator{\id}{id}
\def\cM{\mathcal{M}}
\def\cV{\mathcal{V}}
\def\cO{\mathcal{O}}
\def\bC{\mathbb{C}}
\def\bA{\mathbb{A}}
\def\bL{\mathbb{L}}
\def\bP{\mathbb{P}}
\def\bN{\mathbb{N}}
\def\bK{\mathbb{K}}
\def\bZ{\mathbb{Z}}
\def\be{\begin{equation}}
\def\ee{\end{equation}}
\newcommand{\mb}{\mathbf{M}_{\textup{B}}}
\def\Linf{L_\infty}
\def\mA{\mathfrak{m}_A}
\def\xra{\xrightarrow}
\def\bone{\mathbf{1}}
\def\tart{\texttt{Art}}
\def\Ainf{A_\infty}
\def\bdf{\begin{definition}}
\def\edf{\end{definition}}
\def\cL{\mathcal{L}}
\def\sL{\mathscr{L}}
\begin{document}

\title{
{L-infinity pairs and applications to singularities}}
\author{Nero Budur} 
\address{KU Leuven, Celestijnenlaan 200B, B-3001 Leuven, Belgium} 
\email{nero.budur@kuleuven.be} 
\email{marcel.rubio@kuleuven.be}

\author{Marcel Rubi\'o}

\keywords{Singular varieties; links; Milnor fibers; fundamental group; homotopy type; weight filtration; $\Linf$ algebras and modules; deformation theory.}
\subjclass[2010]{14F35; 14B05; 14J17; 32S35; 32S50; 16E45}

\begin{abstract} Over a field of characteristic zero, every deformation problem with cohomology constraints is controlled by a pair consisting of a differential graded Lie algebra together with a  module. Unfortunately, these pairs are usually infinite-dimensional. We show that every deformation problem with cohomology constraints is controlled by a typically finite-dimensional $L_\infty$ pair. As a first application, we show that for complex algebraic varieties with no weight-zero 1-cohomology classes, the components of the cohomology jump loci of rank one local systems containing the constant sheaf are tori.  This imposes restrictions on the fundamental groups. The same holds for links and Milnor fibers.
\end{abstract}

\maketitle

\setcounter{tocdepth}{1}

\tableofcontents

\section{Introduction}

Consider a geometric object endowed with a cohomology theory. How can one describe all its deformations constrained by the condition that the degree $i$ cohomology vector space has dimension $\ge k$?

The  main goal of this article is to provide an answer to this question over fields of characteristic zero. In order to motivate the general theory, we start with a description of the applications we give.

\subsection{Singular varieties}\label{subCJr1}

Let $X$ be a connected topological space having the homotopy type of a finite CW-complex.
The space $\mb (X)$   of all rank one $\bC$-local systems on $X$ is identified with the group $\Hom (\pi_1(X),\bC^*)$ of rank one representations of the fundamental group $\pi_1(X)$ based at a fixed point of $X$. $\mb(X)$ is an algebraic group, the product of a finite abelian group with the complex affine torus $(\bC^*)^b$, where $b$ is the first Betti number of $X$.

Define the {\it cohomology jump loci}
$$
\Sigma^i_k(X)=\{\sL\in \mb (X)\mid \dim H^i(X,\sL)\ge k\}. 
$$ 
 These loci can be endowed with a natural structure of closed subschemes of $\mb (X)$. The cohomology jump loci are homotopy invariants of the topological space $X$. Moreover,  $\Sigma^1_k(X)$ depends only on $\pi_1(X)$ and $k$. Hence one can define $\Sigma^1_k(G)$ for any finitely presented group $G$. 
 
 There is a Murphy's Law: every affine scheme of finite type over $\bC$ can be  $\Sigma^1_k(G)$ (respectively, $\Sigma^i_k(X)$) for some finitely presented group $G$ (respectively, for some finite CW-complex $X$).

The cohomology jump loci are known to be special in certain  cases. By a {\it  subtorus} of $\mb(X)$ we mean an algebraic subgroup $(\bC^*)^p\subset \mb (X)$. The irreducible components of (the underlying reduced algebraic set of) $\Sigma^i_k(X)$ were shown to be torsion-translated subtori if $X$ is a smooth quasi-projective complex algebraic variety  \cite{BW-qproj}, a compact K\"ahler manifold \cite{W-cKm}, the complement in a small ball of a complex analytic set \cite{BW-small-ball}, and (without the ``torsion'' part) if $X$ is a quasi-compact K\"ahler manifold \cite{BW17}.  This problem has a long history, starting with Beauville, Catanese, Green-Lazarsfeld, see \cite{BW-survey}. 

Beyond the smooth case, the torsion-translated subtorus property was shown for $\Sigma^1_k(X)=\Sigma^1_k(\pi_1(X))$ for normal varieties $X$ by Arapura, Dimca, and Hain \cite{ADH}. Also, all $\Sigma^i_k(X)$ satisfy the translated subtorus property for any complex algebraic variety $X$ admitting a morphism to $f:X\ra Y$ to smooth algebraic variety $Y$ with an injection $f_*:H_1(X,\bZ)\hookrightarrow H_1(Y,\bZ)$. This follows by applying \cite{BW-abs}*{Theorem 10.1.1} to $Rf_*\bC_X$. 

A natural question is then  how singular can a complex variety $X$ be in order for the cohomology jump loci  to be special? 

There is a Murphy's Law in this direction too. By  Simpson \cite{Simp-sing}, Kapovich-Koll\'ar \cite{KaKo}, and, in the form  stated here, by Kapovich \cite{Kapo}, any finitely presented group is the fundamental group of an irreducible projective surface with at most normal crossings and Whitney umbrellas as singularities. Hence, if these singularities are allowed, any affine scheme of finite type can occur as a $\Sigma^1_k(X)$. 

We prove the following:

\begin{theorem}\label{corW}
Let $X$ be a connected complex algebraic variety, possibly reducible. If $W_0H^1(X,\bC)=0$, where $W$ is the weight filtration, then each irreducible component of the algebraic set $\Sigma^i_k(X)$ containing the constant sheaf is a subtorus of $\mb(X)$.
\end{theorem}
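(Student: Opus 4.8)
The plan is to reduce Theorem~\ref{corW} to a linearity statement for the germ of $\Sigma^i_k(X)$ at the constant sheaf $\bone\in\mb(X)$, and to prove that statement with the finite-dimensional $\Linf$-pair technology of the paper together with the mixed Hodge theory of the rational homotopy type of $X$. Rank-one local systems near $\bone$ are governed by the deformation problem of the trivial representation with the constraint $\dim H^i\ge k$; this is a deformation problem with a cohomology constraint, so by our main theorem it is controlled by a finite-dimensional minimal $\Linf$ pair $(L,M)$. Since the underlying deformation problem is that of an abelian DGLA (a commutative cochain model $A^\bullet$ of $X$, viewed as a DGLA with zero bracket, acting on itself by multiplication), $L$ is abelian with $l_1=0$ and in fact $l_n=0$ for all $n$, $L^1\cong H^1(X,\bC)$ is the tangent space of $\mb(X)$ at $\bone$, and $M^\bullet\cong H^\bullet(X,\bC)$ with generally nonzero higher module operations $l_n$ encoding the non-formality of $X$. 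Under the exponential (period) map $\exp\colon H^1(X,\bC)\to\mb(X)$ the germ $(\Sigma^i_k(X),\bone)$ is thereby identified with the germ at $0$ of
\[
R^i_k=\Big\{\omega\in H^1(X,\bC)\ \Big|\ \dim H^i\!\big(H^\bullet(X,\bC),\ \sum\nolimits_{n\ge 2}\tfrac1{(n-1)!}\,l_n(\omega^{\otimes(n-1)},-)\big)\ \ge\ k\Big\}.
\]
The cochain model $A^\bullet$, and hence $(L,M)$, carries a mixed Hodge structure (following Morgan and Navarro Aznar): weight filtrations $W$ on $L$ and $M$ for which all operations $l_n$ are strict of weight zero, with everything defined over $\bQ$; on $H^1(X,\bC)$ this $W$ is Deligne's weight filtration, so the hypothesis $W_0H^1(X,\bC)=0$ says exactly that the deformation variable $\omega$ lies in weights $\ge 1$.

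The heart of the argument is to deduce from this that the components of $R^i_k$ through $0$ are linear subspaces defined over $\bQ$. I would pass to the weight-graded pair $(\gr^W\!L,\gr^W\!M)$, a genuinely bigraded $\Linf$ pair carrying a $\bC^*$-action under which, because $W_0H^1=0$, all deformation directions have strictly positive weight; its cohomology jump loci at $0$ are therefore $\bC^*$-invariant cones. A weight spectral-sequence comparison — using that the twisted differential $\delta_\omega=\sum_{n\ge 2}\tfrac1{(n-1)!}l_n(\omega^{\otimes(n-1)},-)$ is compatible with the (suitably reindexed) filtration $W$ and that its associated-graded differential vanishes — identifies the $E_1$-page of the twisted complex with $(H^\bullet(X,\bC),\,\bar\omega_1\cup)$, where $\bar\omega_1\in\gr^W_1 H^1(X)$ is the weight-one part of $\omega$ and $\gr^W_1 H^1(X)$ is a \emph{pure} Hodge structure of weight one. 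One then argues that the positive-dimensional components of $R^i_k$ through $0$ are already detected on $E_1$, i.e.\ coincide with components of the classical linear resonance variety $\{\bar\omega_1\mid\dim H^i(H^\bullet(X,\bC),\bar\omega_1\cup)\ge k\}$; by the isotropic-subspace theorem and its mixed-Hodge refinements (as used for smooth quasi-projective varieties, cf.\ \cite{ADH}, \cite{BW-qproj}) every such component is a linear subspace underlying a sub-Hodge structure of $H^1(X,\bQ)$, hence defined over $\bQ$ (the zero-dimensional case being trivial).

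To globalize, let $Z$ be an irreducible component of the reduced set underlying $\Sigma^i_k(X)$ with $\bone\in Z$. By the above its germ at $\bone$ is irreducible, hence equals $\exp(V)$ for a single $\bQ$-rational subspace $V\subseteq H^1(X,\bC)$; thus $(Z,\bone)$ agrees with the germ at $\bone$ of the algebraic subtorus $T:=\exp(V)\subseteq\mb(X)$. Since $T$ is irreducible and $Z$ contains an open neighbourhood of $\bone$ in $T$, we obtain $T\subseteq Z$, and equality of dimensions forces $Z=T$. Equivalently, the local analysis exhibits $Z$ as a translated subtorus, and a translated subtorus containing the identity $\bone$ is automatically a subtorus; this proves the theorem.

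The hard part is the middle step: converting the single numerical hypothesis $W_0H^1(X,\bC)=0$ into linearity of the jump-locus germ in the presence of the nonzero higher operations $l_n$. In the formal case (e.g.\ $X$ smooth projective) these vanish and the conclusion is classical; in general one must genuinely use the finite-dimensional weight-graded $\Linf$ pair — that $W$ is multiplicative and strictly compatible with every $l_n$, and that the deformation directions have strictly positive weight — to justify both the $\bC^*$-equivariance and the spectral-sequence reduction to the pure weight-one resonance variety. A secondary technical point is matching the analytic exponential with the scheme structure on $\Sigma^i_k(X)$, so that the local linear picture globalizes to an honest algebraic subtorus.
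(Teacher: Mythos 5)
Your setup (the finite-dimensional $\Linf$ pair $(H^\ubul(X,\bC),H^\ubul(X,\bC))$ with trivial $\Linf$ algebra structure, the weight grading from Morgan/Navarro/Cirici--Horel compatible with all higher operations, and the identification of the germ of $\Sigma^i_k(X)$ at $\bone$ with the germ at $0$ of the resonance set $R^i_k$ via the exponential) matches the paper. But the step you yourself flag as ``the hard part'' is where the proposal breaks down, and it is not how the paper argues. First, you never extract the actual consequence of $W_0H^1(X,\bC)=0$ that the proof needs: combining ``$\omega$ has weight $\ge 1$'' with the universal bound $W_kH^n=H^n$ for $k>2n$ forces $\mu_{n+1}(\omega,\dots,\omega,\eta)=0$ for all $n>4\dim X+2$, so the twisted differential is a \emph{polynomial} in $\omega$ and $R^i_k$ is an honest algebraic subset of $H^1(X,\bC)$ (this is the content of Theorem \ref{thrmLoc}). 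Without that finiteness, $R^i_k$ is only a formal scheme and your subsequent algebraicity and globalization steps have nothing to stand on.

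Second, your claim that the components of $R^i_k$ through $0$ are linear, proved by a weight spectral sequence whose $E_1$-page is $(H^\ubul,\bar\omega_1\cup)$ together with ``positive-dimensional components are already detected on $E_1$'' and the isotropic subspace theorem, is unjustified and essentially circular. Jump loci of an abutment are not determined by the $E_1$-page of a spectral sequence (degeneration behaviour varies from point to point); the assertion that the nonlinear resonance variety agrees with the quadratic one is a tangent-cone-type statement of exactly the kind that Theorem \ref{thrmGTC} addresses, and even there only a containment of the tangent cone is established. Moreover the isotropic subspace theorem and linearity of resonance components are features of smooth quasi-projective geometry, not available for an arbitrary singular $X$ with $W_0H^1=0$. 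The paper avoids all of this: once $R^i_k$ is an algebraic set whose germ at $0$ is carried isomorphically by $\exp$ onto the germ of the algebraic set $\Sigma^i_k(X)$ at $\bone$, the exponential Ax--Lindemann theorem (Theorem \ref{propAx}) immediately gives that the latter germ is a finite union of subtorus germs; linearity and $\bQ$-rationality upstairs are consequences, not inputs. To repair your proof, replace the middle step by the finiteness argument plus Theorem \ref{propAx}.
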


It is known that $W_0H^1(X,\bC)=0$ if $X$ is a normal variety, or more generally, if $X$ is unibranch. The vanishing $W_0H^1(X,\bC)=0$ also holds if $X$ is a variety which admits a morphism to a smooth variety $Y$ with an injection $H_1(X,\bZ)\hookrightarrow H_1(Y,\bZ)$, by functoriality of mixed Hodge structures.

The theorem suggests that the dimension of $W_0H^1(X,\bC)$ should be a topological invariant for irreducible varieties. If $X$ is compact, this was known by \cite{We}. After we posed this question to M. Saito, he has  promptly provided a proof of the topological invariance beyond the compact case for all equidimensional complex varieties \cite{MS}, cf. Remark \ref{rmkW}.

The proof of the above theorem also works for other spaces behaving like algebraic varieties, such as links and Milnor fibers of singularities. Let $X$ be a complex projective variety, $Z$ and $Z'$ closed subschemes, $Y=Z\cup Z'$, and assume that the singular locus of $X$ is contained in $Y$. The {\it link of $Z$ in $X$ with $Y$ removed} is the complement $\cL=\cL(X,Y,Z):=T-Y$ for a nice neighborhood $T$ of $Z$ in $X$, see \cite{DH}. If $Z=\{x\}$ is an isolated singularity of $X$ and $Z'$ is empty, then $\cL$ is the usual link of the singularity $(X,x)$. The cohomology groups and the rational homotopy type of $\cL$ are endowed with rational mixed Hodge structures by Durfee-Hain \cite{DH}.

\begin{theorem}\label{thrmLink}  If a connected component $\cL'$ of the link $\cL$ satisfies $W_0H^1(\cL',\bC)=0$, then each irreducible component of the algebraic set $\Sigma^i_k(\cL')$ containing the constant sheaf is a subtorus of $\mb(\cL')$.
\end{theorem}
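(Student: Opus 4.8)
The plan is to observe that the link component $\cL'$ satisfies exactly the hypotheses that the proof of Theorem~\ref{corW} uses, and then to run that proof in the identical axiomatic setup. The ingredients needed about a connected space $S$ are: (i) $S$ has the homotopy type of a finite CW-complex, so that $\mb(S)$ is an algebraic group, the product of a finite abelian group with a complex torus, and $\Sigma^i_k(S)\subset\mb(S)$ is a closed subscheme; (ii) the complex homotopy type of $S$ carries a functorial, multiplicative mixed Hodge structure, so that the minimal $L_\infty$ model of the cochain algebra is formal and can be equipped with a weight grading compatible with cup product; (iii) $W_0H^1(S,\bC)=0$. For $S=\cL'$ these hold: (i) because $\cL=T-Y$ for a semialgebraic neighbourhood $T$ of $Z$, so $\cL$ has the homotopy type of a finite complex and hence so does each of its connected components; (ii) is precisely the Durfee--Hain theorem \cite{DH}, which endows the rational homotopy type of $\cL$, and therefore of the union of components $\cL'$, with a mixed Hodge structure; and (iii) is the standing hypothesis.

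With these in place I would recall the mechanism of Theorem~\ref{corW}. By the main theorem of this paper, the deformation problem ``deform the trivial rank one local system $\bone$ on $\cL'$ subject to $\dim H^i\ge k$'' is controlled by a finite-dimensional $L_\infty$ pair $(L,M)$, and a Goldman--Millson type comparison identifies the germ of $\Sigma^i_k(\cL')$ at $\bone$, transported along the exponential map $\exp\colon H^1(\cL',\bC)\to\mb(\cL')$, with the germ at $0$ of the cohomology jump locus of $(L,M)$. Functoriality and multiplicativity of the mixed Hodge structure on the homotopy type of $\cL'$ force $(L,M)$ to be formal and to carry a weight grading; on cohomology the $L_\infty$ structure maps $\ell_n$ and the module structure maps are strict morphisms of mixed Hodge structures, so the jump locus is invariant under the induced $\bC^*$-action. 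The hypothesis $W_0H^1(\cL',\bC)=0$ places the deformation directions in weights $\ge 1$, so this $\bC^*$-action has strictly positive weights; combining positivity with the Hodge structure carried by the weight-graded pieces $\gr^W H^1(\cL',\bC)$, one concludes, exactly as in the quasi-projective and K\"ahler cases, that the germ at $0$ is a finite union of linear subspaces of $H^1(\cL',\bC)$. Hence each irreducible component of $\Sigma^i_k(\cL')$ meeting $\bone$ is the Zariski closure of the exponential of a linear subspace, i.e.\ a subtorus of $\mb(\cL')$.

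The main obstacle is input (ii): one must verify that the Durfee--Hain mixed Hodge structure on the rational homotopy type of $\cL'$ has precisely the features the proof of Theorem~\ref{corW} exploits — functoriality, compatibility with cup product, polarizability of the graded pieces, and weight bounds ensuring that $W_0H^1=0$ is the only condition required (rather than extra constraints on the Hodge types appearing in $\gr^W H^1$) — and that this structure propagates to the controlling pair $(L,M)$. Once this compatibility is checked, the deformation-theoretic core, namely the $L_\infty$ control theorem and the comparison with the cohomology jump ideals, is already formulated in the needed generality, so no further argument beyond the variety case is required; minor points to record are that $\cL'$ is connected and of finite type, so $\mb(\cL')$ has the stated structure, and that restricting to a connected component does not affect the vanishing $W_0H^1=0$.
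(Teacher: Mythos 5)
Your overall strategy coincides with the paper's: Theorem \ref{thrmLink} is proved there by observing (Remark \ref{rmkLinkCH}) that Theorem \ref{thrmCH} applies to each connected component of a link, because Durfee--Hain supply the required cohomologically connected multiplicative mixed Hodge diagram, and then repeating the proof of Theorem \ref{corW} verbatim. You correctly isolate this Hodge-theoretic input as the only genuinely new point, and you correctly note that connectedness and finiteness of the homotopy type of $\cL'$ must be recorded. However, your recollection of how the proof of Theorem \ref{corW} actually runs contains two errors that would become real gaps if the argument were executed as written.

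First, the controlling pair is not formal, and the existence of a multiplicative mixed Hodge structure does not force formality. Theorem \ref{thrmCH} only produces a cdga model carrying an extra grading that induces the weight filtration; it is not a quasi-isomorphism to the cohomology algebra, and the entire $A_\infty$/$L_\infty$ apparatus of the paper exists precisely because singular varieties, links and Milnor fibers need not be formal. Second, the step ``positivity of the weights on $H^1$ plus invariance under the induced $\bC^*$-action implies the germ at $0$ is a finite union of linear subspaces'' does not hold: invariance under a $\bC^*$-action with positive weights only makes the germ a weighted-homogeneous cone, which need not be a union of linear subspaces. What the paper actually does is combine $W_0H^1=0$ (so all weights on $H^1$ are $\ge 1$) with the upper bound $\gr^W_kH^{i+1}=0$ for $k>2(i+1)$ to conclude that $\mu_{n+1}(\omega,\dots,\omega,\eta)=0$ for all $n$ sufficiently large (Theorem \ref{thrmStrict2} together with the weight count in the proof of Theorem \ref{corW}); this is exactly the finiteness hypothesis of Theorem \ref{thrmLoc}, which makes the resonance variety an honest algebraic subset of $H^1(\cL',\bC)$, and the subtorus conclusion is then delivered by the exponential Ax--Lindemann theorem (Theorem \ref{propAx}), not by any linearity, strictness of Hodge morphisms, or polarizability of the graded pieces. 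With these two corrections, your reduction of the link case to the variety case is precisely the paper's proof.
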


In particular, one has restrictions on fundamental groups of such links. In contrast, there is a Murphy's Law for links too: every finitely presented group is the fundamental group of a link of an isolated complex singularity of dimension 3, by Kapovich-Koll\'ar \cite{KaKo}. Unlike for varieties, $W_0H^1(\cL,\bC)=0$ is not a topological property of a connected link, by Steenbrink-Stevens \cite{SS}*{\S 3}.

For a germ of a holomorphic function $f:(\bC^n,0)\ra (\bC,0)$, let $F$ denote the Milnor fiber. A mixed Hodge structure on the cohomology of $F$ has been constructed by Steenbrink \cite{St-F}, Navarro \cite{Na}, and Saito \cite{Sa-F}.  

\begin{theorem}\label{thrmMFiber}  If a connected component $F'$ of the Milnor fiber $F$ satisfies $W_0H^1(F',\bC)=0$, then each irreducible component of the algebraic set $\Sigma^i_k(F')$ containing the constant sheaf is a subtorus of $\mb(F')$.
\end{theorem}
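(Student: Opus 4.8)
The plan is to deduce Theorem~\ref{thrmMFiber} from the same mechanism that, presumably, proves Theorems~\ref{corW} and \ref{thrmLink}: one packages the cohomology jump loci $\Sigma^i_k(F')$ near the constant sheaf into a deformation problem with cohomology constraints, shows this problem is controlled by a finite-dimensional $\Linf$ pair, and then uses the extra structure coming from the mixed Hodge structure on the cohomology of the Milnor fiber to force the $\Linf$ pair to be \emph{formal} with \emph{pure weight-one} degree-one part. Concretely, I would first recall that the germ of $\Sigma^i_k(F')$ at the trivial local system $\bC_{F'}$ is the zero locus, in the (pro-representable) deformation functor of local systems, of the cohomology-jump condition $\dim H^i \ge k$, and that by the main finiteness result of the paper this functor together with its constraint is governed by an $\Linf$ pair $(L, M)$ with $L$ a finite-dimensional $\Linf$ algebra quasi-isomorphic to a model for $H^*(F',\bC)$ (self-extensions of $\bC_{F'}$) and $M$ a finite-dimensional $\Linf$ $L$-module modeling $H^*(F',\bC)$ itself.

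Second, I would invoke the mixed Hodge structure on $H^*(F',\bC)$ constructed by Steenbrink, Navarro, and Saito: this equips the relevant cochain-level model with a bigrading (or a multiplicative weight filtration compatible with the $\Linf$ and module operations), and the hypothesis $W_0H^1(F',\bC)=0$ says precisely that $H^1$ — hence the tangent space $\Hom(\pi_1(F'),\bC)$ of $\mb(F')$ — carries only weights $\ge 1$. The key formal input is that a mixed-Hodge-theoretic $\Linf$ pair with weights concentrated appropriately is \emph{formal}: the higher brackets $\mu_n$ and higher module structure maps $n\ge 3$ shift weight nontrivially and must vanish by strictness, so $(L,M)$ is quasi-isomorphic to its cohomology with the induced (graded Lie algebra, graded module) structure. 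This is the $\Linf$-pair analogue of the Deligne--Griffiths--Morgan--Sullivan formality argument, and it is exactly the step I expect the paper to have abstracted into a lemma about weights on $\Linf$ pairs.

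Third, once formality is in hand, the germ of $\Sigma^i_k(F')$ at the origin is identified with the germ at the origin of the \emph{quadratic-type} (in fact purely Lie-theoretic, here abelian since $H^1$ of a local system computation linearizes) cohomology support locus of the graded pair, i.e.\ with a cone defined by the resonance/jump conditions of $(H^*(F',\bC), \cup)$. The weight-purity of $H^1$ then upgrades this: the defining equations are homogeneous with respect to the $\bC^*$-action coming from the Hodge grading, the exponential $\mb(F')^\circ = H^1(F',\bC)/H^1(F',\bZ(1)) \to (\bC^*)^b$ carries these homogeneous cones to subtori, and since we started with an irreducible component through the identity, its image is an algebraic subgroup which is connected, hence a subtorus. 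This is the ``abelianization/exponentiation'' step: homogeneous cone in the tangent space, closed under the MHS-induced $\bC^*$-action, passes through $\exp$ to a subtorus of $\mb(F')$. I would cite the corresponding step in the proof of Theorem~\ref{corW} verbatim, since the only geometric inputs used there are (i) the existence of a functorial MHS on $H^*$ compatible with cup product, (ii) its compatibility with the $\Linf$-pair controlling cohomology jump loci, and (iii) $W_0H^1=0$; all three hold for $F'$ by Steenbrink--Navarro--Saito and the general machinery.

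The main obstacle is the compatibility in (ii): one must check that the mixed Hodge structure on $H^*(F',\bC)$ lifts to a mixed-Hodge-theoretic structure on an actual finite-dimensional $\Linf$ pair controlling the deformation-with-constraints functor of $\bC_{F'}$ — i.e.\ that the weight filtration is multiplicative for the higher brackets and the higher module maps, not merely for the cup product on cohomology. For varieties this follows from a mixed Hodge structure on the full (co)simplicial or logarithmic de Rham model (Morgan, Navarro); for the Milnor fiber one needs the corresponding statement for the Steenbrink–Navarro–Saito model, namely that the $\Linf$ pair built from it is a \emph{mixed Hodge $\Linf$ pair} in the paper's sense. Granting that (it is the natural analogue of Durfee--Hain's mixed Hodge structure on the rational homotopy type of links, which the paper already uses for Theorem~\ref{thrmLink}), the rest is the formality-plus-exponentiation argument above, applied on each connected component $F'$ separately.
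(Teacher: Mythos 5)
Your reduction to the variety case via the Steenbrink--Navarro--Saito mixed Hodge theory is the right move, and you correctly identify the real geometric input: a \emph{multiplicative} mixed Hodge model for the Milnor fiber (the paper gets this from Navarro's mixed Hodge diagram via Remark \ref{rmkLinkCH}, which extends Theorem \ref{thrmCH} to connected components of Milnor fibers). But the core of your argument --- that weight considerations force the controlling $\Linf$ pair to be \emph{formal} --- is a genuine gap, and in fact it is false. The transferred $A_\infty$/$\Linf$ operations coming from a weight-bigraded model \emph{preserve} the weight grading (Corollary \ref{corKgr}): $\mu_n$ maps $H^{i_1}_{p_1}\otimes\cdots\otimes H^{i_n}_{p_n}$ to $H^{i_1+\cdots+i_n+2-n}_{p_1+\cdots+p_n}$. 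The cohomological degree drops by $n-2$ but the weight does not shift, so strictness gives no vanishing of $\mu_n$ for $n\ge 3$ in general. What the hypothesis $W_0H^1=0$ actually buys is much weaker and is exactly what Theorem \ref{thrmLoc} asks for: the specific iterated products $\mu_{n+1}(\omega,\ldots,\omega,\eta)$ with $\omega\in H^1$ land in weight $\ge n$ inside $H^{i+1}$, whose weights are bounded above by $2(i+1)$, hence vanish for $n$ large. The resonance variety one then exponentiates is cut out by the cohomology jump ideals of the \emph{full} twisted differential $d_\omega=\sum_n \mu_{n+1}(\omega^{\otimes n}\otimes\_)/n!$ (a finite sum by the above), not by the cup product alone; your ``quadratic-type'' description would only be available in the formal case, which is precisely the case the paper's machinery is designed to avoid assuming.

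A secondary problem is your exponentiation step: a homogeneous algebraic cone in $H^1$ does not exponentiate to a subtorus in general (a linear subspace does so only if it is defined over $\bQ$ with respect to the integral lattice). The mechanism in the paper is the exponential Ax--Lindemann theorem (Theorem \ref{propAx}): since the resonance germ and the germ of $\Sigma^i_k(F')$ are germs of \emph{algebraic} sets in $\bC^b$ and $(\bC^*)^b$ respectively, and $\exp$ identifies them, the target must be a finite union of subtori. Homogeneity plays no role; algebraicity of both sides does, and the finiteness $n_0$ of the nonzero products is needed precisely to make the resonance locus an honest algebraic subscheme of $H^1$.
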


The condition  $W_0H^1(X)=0$ implies also Morgan-type obstructions on fundamental groups for varieties, see Proposition \ref{rmkADH}, since the proof of \cite{ADH}*{Theorem 1.2} extends beyond the normal case. In fact, we were informed by R. Hain that this also holds for links and Milnor fibers, see Proposition \ref{propMoLi}. These obstructions do not depend on the deformation theory developed below, we only include them for completion.

\subsection{Strategy}
Theorem \ref{corW} is a consequence of three fundamental results: deformation theory with cohomology constraints, an exponential Ax-Lindemann theorem, and strictness with respect to the weight filtration of the higher multiplication maps on the cohomology of algebraic varieties. 

For a connected topological space $X$, let $\Omega_{{\rm DR}}(X)$ be Sullivan's commutative differential graded algebra (abbreviated {\it cdga} from now) of piecewise smooth $\bC$-forms on $X$. It is known that $\Omega_{{\rm DR}}(X)$ governs {the} structure of  $\Sigma^i_k(X)$ around the constant sheaf for all integers $i$ and $k$ according to \cites{DPS, DimPap14, BW}; the precise meaning of this statement will be explained in \ref{subDP}.

If $\Omega_{{\rm DR}}(X)$ is equivalent to a finite-dimensional cdga, then  every component  of $\Sigma^i_k(X)$ containing the constant sheaf is a subtorus. The reason is that in this case one can construct closed subschemes $R^{\,i}_k(X)$ in $H^1(X,\bC)$, called {\it resonance varieties}, whose image under the exponential map recovers the components of $\Sigma^i_k(X)$ containing the constant sheaf. Then one can apply the following exponential Ax-Lindemann theorem from \cite{BW17}*{Corollary 2.2}:

\begin{theorem}\label{propAx}
Suppose $(W, 0)$ and $(V, 1)$ are analytic germs of two algebraic sets in $\bC^n$ and $(\bC^*)^n$, respectively. If the exponential map $\exp: \bC^n\to (\bC^*)^n$ induces an isomorphism between $(W, 0)$ and $(V, 1)$, then $(V, 1)$ is the germ of a finite union of subtori. 
\end{theorem}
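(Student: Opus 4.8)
The plan is to reduce the statement to the classical Ax–Lindemann theorem (or to the Ax–Schanuel theorem for the torus) by exhibiting the germs $(W,0)$ and $(V,1)$ as germs of algebraic subvarieties that are ``atypical'' for the exponential map, and then to invoke a rigidity statement for such atypical intersections. First I would set up coordinates: write $\exp\colon\bC^n\to(\bC^*)^n$ for the componentwise exponential, so that the graph $\Gamma=\{(z,\exp z)\}\subset\bC^n\times(\bC^*)^n$ is the relevant ``Ax–Schanuel'' correspondence. The hypothesis says that, near the point $(0,1)$, the graph $\Gamma$ meets $W\times V$ in a set whose projection to either factor is an analytic isomorphism onto the germ of $W$ (resp.\ $V$); in particular the local dimension of $\Gamma\cap(W\times V)$ equals $\dim_0 W=\dim_1 V=:d$, which is generally larger than the ``expected'' dimension $\dim W+\dim V-n$ of the intersection inside $\bC^n\times(\bC^*)^n$. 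Thus the component through $(0,1)$ is an atypical component in the sense of Zilber–Pink.

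The key step is then to apply the Ax–Lindemann theorem for the algebraic torus $(\bC^*)^n$: the Zariski closure of a ``weakly special'' (here, linear) subvariety's image under $\exp$ is a coset of a subtorus, and conversely the analytic components of the preimage of an algebraic subvariety that are themselves algebraic must be translates of linear subspaces defined over $\bQ$. Concretely, I would argue that since the isomorphism $\exp\colon(W,0)\xrightarrow{\sim}(V,1)$ is in particular a biholomorphism, the germ $(W,0)$ is an analytic-germ component of $\exp^{-1}(V)$; being simultaneously algebraic (a germ of the algebraic set $W$) forces $(W,0)$ to be the germ of a finite union of affine-linear subspaces each defined over $\bQ$ and passing through $0$, hence of $\bQ$-linear subspaces. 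Their images under $\exp$ are then precisely subtori of $(\bC^*)^n$, and the germ of $V$ at $1$ is the corresponding finite union of (germs of) subtori. The reduction to subtori through $1$ rather than general cosets uses that $0\in W$ maps to $1\in V$, so no translation occurs.

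The main obstacle I anticipate is making precise the passage from ``$(W,0)$ is an algebraic germ that is also a germ-component of the analytic set $\exp^{-1}(V)$'' to ``$(W,0)$ is a germ of a union of rational linear subspaces.'' The cleanest route is to cite the functional Ax–Lindemann statement already available in the literature (for instance the version for semiabelian varieties, or directly Ax's original theorem on the exponential of $\bG_m^n$), applied after taking Zariski closures: let $\overline W$ be the Zariski closure in $\bC^n$ of the union of all maximal algebraic subvarieties of $\exp^{-1}(V)$ whose germ at $0$ is contained in $(W,0)$; Ax–Lindemann identifies the maximal such algebraic subvarieties as translates of $\bQ$-subspaces, and the isomorphism hypothesis pins down finitely many of them whose exponential images cover $(V,1)$. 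A secondary technical point is to check that finiteness of the union is automatic from Noetherianity of the germ of $V$ together with the discreteness of the set of subtori of bounded ``degree''; this is routine but should be stated. Once these points are in place, the theorem follows; the essential input is thus the classical Ax–Lindemann rigidity, and the contribution here is merely the translation into the germ-isomorphism formulation needed for the resonance-variety application.
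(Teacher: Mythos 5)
First, a point of reference: the paper does not prove Theorem \ref{propAx} at all --- it is quoted verbatim from \cite{BW17}*{Corollary 2.2} and used as a black box. So there is no internal proof to compare against; I can only assess your argument on its own terms. Your overall strategy --- interpret the hypothesis as saying that the germ of $W$ at $0$ coincides with the germ of $\exp^{-1}(V)$ at $0$ under the local biholomorphism $\exp$, and then invoke the classical Ax--Lindemann theorem for $\mathbb{G}_m^n$ (maximal irreducible algebraic subvarieties of $\exp^{-1}(V)$ are cosets of $\bQ$-linear subspaces; equivalently, the Zariski closure of $\exp(W')$ for $W'$ irreducible algebraic is a coset of a subtorus) --- is a legitimate route and can be made to work. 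The Zilber--Pink framing in your first paragraph is motivational only and does no work.

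The genuine gap is in the passage from ``contained in'' to ``equal to.'' Ax--Lindemann gives you only that each irreducible \emph{algebraic} component $W_j$ of $W$ through $0$ (note: not each analytic branch of the germ --- these need not be algebraic) satisfies $W_j\subseteq L_j$ for some $\bQ$-linear subspace $L_j\ni 0$ with $\exp(L_j)\subseteq V$; it does not give $W_j=L_j$, and without that you cannot conclude that $(V,1)$ is a union of subtorus germs rather than merely being sandwiched between one union of such germs and another. The missing argument is exactly where the full germ-isomorphism hypothesis (as opposed to mere containment $\exp(W,0)\subseteq (V,1)$, for which the statement is false --- take $W=\{0\}$ and $V$ any non-torus curve through $1$) must be used: since the germ of $W$ at $0$ equals the \emph{entire} germ of $\exp^{-1}(V)$ at $0$, each germ $(L_j,0)$ is forced back inside $(W,0)$, hence inside the germ of some irreducible algebraic component $W_i$; then $L_j\subseteq W_i$ by taking Zariski closures, and since $W_j\subseteq L_j\subseteq W_i$ with $W_j$ a component of $W$, one gets $W_j=W_i=L_j$. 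Only after this does $(W,0)=\bigcup_j(L_j,0)$ and hence $(V,1)=\bigcup_j(\exp(L_j),1)$ follow. You gesture at this step (``the isomorphism hypothesis pins down finitely many of them whose exponential images cover $(V,1)$'') but do not supply it, and it is the crux. Two smaller points: the identity-principle step ($\exp(W_j)\subseteq V$ locally implies globally, so that Ax--Lindemann applies to the algebraic set $W_j$) should be stated; and finiteness is automatic because an analytic germ has finitely many irreducible components, so your proposed appeal to bounded degrees of subtori is unnecessary.
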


It is known that $\Omega_{{\rm DR}}(X)$ is equivalent to a finite-dimensional cdga if $X$ is a quasi-compact K\"ahler manifold \cite{Mo}. Beyond this case, we need a new strategy. The strategy for the proof of Theorem \ref{corW} is to force finiteness. There will be a price to pay for doing so. Then one has to improve the deformation theory to make sure the price paid for finiteness is affordable.

Forcing finiteness for the controlling cdga is not a new idea. We will use the minimality theorem of Kadeishvili which implies that $\Omega_{{\rm DR}}(X)$ is always $A_\infty$ equivalent to the cohomology $H(X)$ endowed with a  structure of  $A_\infty$ algebra, canonical up to $A_\infty$ isomorphism. The price paid is having to work with $A_\infty$ algebras instead of cdga's.

An {$A_\infty$ algebra} is a graded vector space $A$ together with graded linear maps  $$\mu_n:A^{\otimes n}\ra A$$ of degree $2-n$ for each $n\ge 1$, satisfying associativity up to homotopy. Every dga is an $A_\infty$ algebra with $\mu_1$ the differential, $\mu_2$ the multiplication, and $\mu_{>2}=0$. On the cohomology $H(X)$, $\mu_1=0$, $\mu_2$ is the cup product, and the higher $\mu_n$ are related to the higher Massey products.

By Cirici-Horel \cite{CH}, for every complex variety $X$, Sullivan's de Rham cdga is equivalent to a cdga admitting an extra grading compatible with the differential and the multiplication, and inducing the weight filtration on $H(X)$, see Theorem \ref{thrmCH}. This was first proven by Morgan \cite{Mo} in the case of smooth varieties and extended by Cirici-Guill\'en \cite{CG} to possibly singular nilpotent varieties. The approach of \cite{CH} allows to remove the nilpotency conditions in the singular case. The result of Cirici-Horel holds for links and Milnor fibers of singularities as well, cf. Remark \ref{rmkLinkCH}.

Using Theorem \ref{thrmCH}, we prove that the $A_\infty$ products on $H(X)$ are strict with respect to the weight filtration, see Theorem \ref{thrmStrict2}.

To apply all this, an improved version of deformation theory with cohomology constraints is  needed, where the $A_\infty$ algebra $H(X)$ replaces  the cdga $\Omega_{{\rm DR}}(X)$. We will describe this in the next subsections. For now, to finish describing the proof of Theorem \ref{corW}, we  mention that the condition $W_0H^1(X)=0$ guarantees finiteness of the relevant equations describing locally $\Sigma^i_k(X)$ around the constant sheaf. That is,  resonance varieties can be again constructed as honest schemes of finite type over $\bC$, and the exponential Ax-Lindemann theorem applies  to conclude Theorem \ref{corW}.

In practice, to apply deformation theory we work with differential graded Lie algebras (abbreviated {\it dgla}'s) and $L_\infty$ algebras, and with modules over these. We also work not necessarily only around the constant sheaf. The structure of the loci $\Sigma^i_k(X)$ around a local system $\sL$ is governed by the pair $$(\Omega_{{\rm DR}}(X),\Omega_{{\rm DR}}(\sL))$$ by \cite{BW}, as we explain in subsection \ref{subDP}, where $\Omega_{{\rm DR}}(\sL)$ is the dgl module over $\Omega_{{\rm DR}}(X)$  obtained from the forms with values in the rank one local system $\sL$.

If the dgl pair $(\Omega_{{\rm DR}}(X),\Omega_{{\rm DR}}(\sL))$ is equivalent to a finite-dimensional dgl pair, then the translated-subtorus property  holds for every component  of $\Sigma^i_k(X)$ containing $\sL$,  by \cite{BW17}*{Theorem 1.3}. We improve here on this  result as follows. There is an $L_\infty$ pair structure unique up to $\Linf$ pair isomorphism on $(H(X,\bC),H(X,\sL))$  making it equivalent as an $L_\infty$ pair with the dgl pair $(\Omega_{{\rm DR}}(X),\Omega_{{\rm DR}}(\sL))$, see Theorem \ref{thmMainLDef}.   The $\Linf$ algebra structure on $H(X,\bC)$ is trivial, but the $\Linf$ module structure of $H(X,\sL)$ is not, even when $\sL$ is the constant sheaf; we call this {\it a canonical $\Linf$ module structure}. When $\sL$ is the constant sheaf, this is essentially an $A_\infty$ algebra structure on $H(X,\bC)$ making it $A_\infty$ equivalent to the cdga $\Omega_{{\rm DR}}(X)$; we call it  {\it a canonical $A_\infty$ algebra structure}.

\begin{theorem}
\label{thrmLoc} Let $X$ be a connected topological space, homotopy equivalent to a finite CW-complex.

(1) If there exists $n_0$ and a canonical  $A_\infty$ algebra structure $m=(m_n)_{n\ge 2}$ on  $H(X,\bC)$ with $$m_n(\omega,\ldots,\omega,\eta)=0$$  for all $n>n_0$, $\omega\in H^1(X,\bC)$, $\eta\in H(X,\bC)$, then each irreducible component of the algebraic set $\Sigma^i_k(X)$ containing the trivial local system is a subtorus of $\mb(X)$.

(2) Let $\sL$ be a rank one local system on $X$. If there exists $n_0$ 
and a canonical  $L_\infty$ module structure $m=(m_n)_{n\ge 2}$  on  $H(X,\sL)$ over  $H(X,\bC)$,  with $$m_n(\omega,\ldots,\omega,{\eta})=0$$  for all $n>n_0$, $\omega\in H^1(X,\bC)$, ${\eta}\in H^\ubul(X,\sL)$, then every irreducible component of the algebraic set $\Sigma^i_k(X)$ passing through $\sL$ is a translated subtorus of $\mb(X)$.
\end{theorem}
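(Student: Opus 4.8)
The plan is to reduce Theorem~\ref{thrmLoc} to the exponential Ax--Lindemann statement (Theorem~\ref{propAx}) by building \emph{finite-dimensional} resonance varieties out of the hypothesized truncated canonical $A_\infty$/$L_\infty$ structures. First I would recall, following \cites{DPS,DimPap14,BW} and the discussion of \S\ref{subDP}, the Maurer--Cartan/deformation-functor description: the germ of $\Sigma^i_k(X)$ at the local system $\sL$ is cut out by the functor sending an Artin local ring $A$ to gauge-equivalence classes of Maurer--Cartan elements of $\Omega_{{\rm DR}}(X)\otimes\mathfrak{m}_A$ together with the locus where the resulting twisted cohomology $H^i$ jumps. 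The key input is that this deformation problem with cohomology constraints is, up to equivalence of $\Linf$ pairs, governed by $(H(X,\bC),H(X,\sL))$ with its canonical (truncated) $\Linf$ structure $m=(m_n)$, by Theorem~\ref{thmMainLDef}; equivalences of $\Linf$ pairs induce isomorphisms of the associated deformation-with-constraints functors, hence of the germs of the cohomology jump loci.

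Next I would carry out the linearization. Write $a=\omega+(\text{higher order})\in H^1(X,\bC)\otimes\mathfrak m_A$ for a Maurer--Cartan element of the \emph{abelian} $\Linf$ algebra $H(X,\bC)$ (abelian since the canonical $\Linf$ algebra structure on $H(X,\bC)$ is trivial), so the ``base'' deformation space is literally the (formal/analytic) germ of $H^1(X,\bC)$, i.e. the germ at $1$ of $\mb(X)=\Hom(\pi_1,\bC^*)$ pulled back along $\log$. The twisted differential on the module side is then $d_a(\eta)=\sum_{n\ge 2}\tfrac{1}{(n-1)!}m_n(a,\dots,a,\eta)$, which by hypothesis~(1)/(2) is a \emph{finite} sum: only the terms with $2\le n\le n_0$ survive when we restrict attention to the jump condition, because $m_n(\omega,\dots,\omega,\eta)=0$ for $n>n_0$. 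Therefore the condition $\dim H^i(d_a)\ge k$ is defined by vanishing of minors of the finitely many polynomial (in the coordinates of $\omega$) matrices built from $m_2,\dots,m_{n_0}$; this defines an honest closed subscheme $R^i_k(X)\subset H^1(X,\bC)$ of finite type, the \emph{resonance variety}. (For part~(2) one twists additionally by the fixed character corresponding to $\sL$, producing a translate; the same finiteness holds.)

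The third step is the comparison of germs: I would show that $\exp\colon H^1(X,\bC)\to\mb(X)$ (centered at $1$, resp. at $\sL$) carries the germ of $R^i_k(X)$ isomorphically onto the germ of $\Sigma^i_k(X)$ at the trivial local system (resp. at $\sL$). One direction is the standard fact that the flat connection $d+a$ realizes the rank-one local system $\exp(a)$ and computes its cohomology; the reverse containment and the fact that $\exp$ is a local analytic isomorphism matching the \emph{scheme} structures is exactly the content of the cited results \cite{BW17}*{Theorem 1.3} and \cite{BW}, now applied with the finite-dimensional $\Linf$ pair in place of a finite-dimensional dgl pair. Having identified an analytic germ $(V,1)$ of the algebraic set $\Sigma^i_k(X)\subset(\bC^*)^b$ with an analytic germ $(W,0)$ of the algebraic set $R^i_k(X)\subset\bC^b$ via $\exp$, Theorem~\ref{propAx} immediately yields that each irreducible component of $\Sigma^i_k(X)$ through the trivial local system is a subtorus; in case~(2) the extra translation by $\sL$ makes each component a translated subtorus.

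The main obstacle I anticipate is not the Ax--Lindemann step, which is purely formal once finiteness is in hand, but establishing rigorously that the \emph{truncated} canonical $\Linf$ module structure still controls the cohomology jump loci as a scheme — i.e. that one may legitimately replace the a priori infinite tower $(m_n)_{n\ge2}$ by its truncation $(m_n)_{2\le n\le n_0}$ without changing the germ of $\Sigma^i_k(X)$. This requires knowing that the gauge action and the Maurer--Cartan equation only see $m_n(\omega,\dots,\omega,\eta)$ on the relevant (rank-one, degree-one) directions, so that the hypothesis $m_{>n_0}(\omega,\dots,\omega,\eta)=0$ genuinely kills all higher contributions to the jump condition; this is where the deformation theory of \S\ref{subDP} and Theorem~\ref{thmMainLDef} must be invoked with care, together with the observation that for a rank-one (hence one-dimensional) coefficient system the module deformation parameter is essentially a scalar, so all insertions into $m_n$ are of the special form $(\omega,\dots,\omega,\eta)$. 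Once that reduction is justified, the construction of $R^i_k(X)$ and the application of Theorems~\ref{thmMainLDef}, \ref{propAx} assemble into the proof.
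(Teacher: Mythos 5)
Your proposal is correct and follows essentially the same route as the paper: transfer to the canonical $\Linf$ pair $(H(X,\bC),H(X,\sL))$ via Theorem~\ref{thmMainLDef}, use triviality of the $\Linf$ algebra structure to identify the base germ with $H^1(X,\bC)_{(\mathbf{0})}$ under $\exp$, use the hypothesis $m_n(\omega,\ldots,\omega,\eta)=0$ for $n>n_0$ to make the universal twisted differential polynomial so the resonance variety is an honest finite-type scheme, and conclude by Theorem~\ref{propAx}. The worry in your last paragraph is not a real obstacle: nothing is being truncated, since the twisted differential $d_\omega$ by its very definition only evaluates the $m_n$ on inputs of the form $(\omega,\ldots,\omega,\eta)$ with $\omega\in H^1$, where the hypothesis makes them vanish outright.
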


Part (1) is satisfied, and therefore implies Theorem \ref{corW}, if $W_0H^1(X)=0$ for an algebraic variety, by strictness of the higher multiplication maps on $H(X)$.

\subsection{Deformation problems with cohomology constraints}\label{subDP}  The main technical part of the article is a general solution for all deformation problems with cohomology constraints over a field of characteristic zero. We describe now a personal view of the state-of-the-art and introduce some notation. In the next subsection, we state our results.

A {\it deformation problem} means: describe all the deformations keeping a certain structure of an object up to isomorphisms. In the presence of a moduli space $\cM$, say as scheme whose closed points parametrize the isomorphism classes of the objects to be deformed, the deformation problem for an object $\rho\in\cM$ is equivalent to describing the formal germ $\cM_{\rho}$ of $\cM$ at $\rho$.

 A principle of Deligne \cite{Del86} says that, over a field of characteristic zero, every deformation problem is controlled by a dgla, and two equivalent dgla's describe the same deformation theory.  This means that, as functors on Artinian local algebras, $\cM_{\rho}$ is naturally isomorphic to the deformation functor $\Def (C)$ attached to some dgla $C$, defined as the Maurer-Cartan elements in $H^1C$ modulo the gauge action of $H^0C$, and if $C'$ is a dgla equivalent to $C$, then $\Def(C)\simeq\Def(C')$. 

This principle has been illustrated many times, beginning with the work of Goldman-Millson \cite{GM88}, where the bases of the deformation theory in terms of dgla's were built upon previous work of Schlessinger, Stasheff. 

The passage from functors to actual equations is however only partially answered by this approach. In order for $\Def(C)$ to provide useful equations describing $\cM_{\rho}$, one would need at least that $C$ is finite-dimensional. Typically this is not the case, e.g. $\Omega_{{\rm DR}}(X)$.

The approach via dgla's to deformation theory is particularly successful in the situation when $C$ is equivalent to its cohomology dgla $HC$, that is, when $C$ is a {\it  formal} dgla. Typically, $HC$ is finite-dimensional. In addition, the vanishing of the differential in $HC$ translates into a beautiful answer for the deformation problem in terms of the linear algebra of the Lie bracket on $HC$: the space $\cM$ has quadratic singularities at $\rho$ up to gauge, see \cite{GM88} and \cite{Man04} for more on this subject.
 
Beyond formality, the  approach via dgla's to deformation problems lacks the necessary power to provide equations. Unless one is able to find at least an equivalent finite-dimensional dgla, however this is typically a difficult process, and in no way guaranteed.

The final answer to providing equations for $\cM_{\rho}$ even beyond formality is given by $L_\infty$ algebras. These come with a possibly-infinite series of ``higher-order operations'' $l_i$ with $i\ge 1$. Those $\Linf$ algebras with $l_i=0$ for $i>2$ are precisely dgla's, in which case $l_1$ is the differential and $l_2$ is the Lie bracket. The Maurer-Cartan equations for $L_\infty$ algebras involve the higher operations $l_i$ as well. There is by now a well-established deformation theory in terms of $L_\infty$ algebras, due to Fukaya, Konstevich, Soibelman, Manetti, etc. Namely,  one can define a deformation functor via Maurer-Cartan elements modulo homotopy equivalence, two equivalent $\Linf$ algebras give the same deformation functor, and that this recovers the deformation theory via dgla's. Moreover, every dgla $C$ is $L_\infty$ equivalent to its cohomology $HC$ endowed with an $L_\infty$ structure canonical up to $\Linf$ isomorphism. Hence  $\cM_{\rho}$ is given by the $L_\infty$ Maurer-Cartan equations in the affine space $H^1C$ modulo homotopy equivalence. The quadraticity from the formal case is a reflection of the fact that in that case only the (quadratic) Lie bracket $l_2$ is involved in the Maurer-Cartan equations. The solutions of the $L_\infty$ Maurer-Cartan equations in $H^1C$, that is before taking the quotient modulo  homotopy equivalence, describe locally the {mini-versal deformation space}, or the {Kuranishi space}, of $\rho$.

By a {\it deformation problem with cohomology constraints} we mean that the objects to be deformed come with a cohomology theory and that we would like to describe  those deformations whose $i$ degree cohomology has dimension $\ge k$.  In the presence of a moduli space $\cM$ of objects with a cohomology theory, this type of problem is equivalent to describing inside $\cM$ the formal germs at $\rho\in\cM$ of the {\it cohomology jump loci}, also known as the {\it generalized Brill-Noether loci}, 
$$
\cV^i_k=\{\rho\in\cM\mid \dim H^i(\rho)\ge k\}
$$
endowed with the natural structure of subschemes of $\cM$, for all $i$ and $k$. 

Deligne's principle has been extended by Budur-Wang \cite{BW} to this type of problems: every deformation problem with cohomology constraints over a field of characteristic zero is controlled by a pair consisting of a dgla together with a dgl module, and two equivalent dgl pairs describe the same deformation problem with cohomology constraints.  

More precisely, given an object $\rho\in \cM$, one has an attached dgla $C$ controlling the germ $\cM_{\rho}$. $C$ is typically an endomorphism object, and as such, it comes with a natural dgl module $M$ on which it acts, and such that $H^iM=H^i(\rho)$. Then the pair $(C,M)$ describes locally at $\rho$  the cohomology jump loci $\cV^i_k$ for all $i$ and $k$ at once. This means that one can attach subfunctors $\Def^i_k(C,M)$ of $\Def(C)$ that are naturally isomorphic to the functors associated to the formal germs $(\cV^i_k)_\rho$ of $\cV^i_k$ at $\rho$. Moreover, for two equivalent dgl pairs, the cohomology jump subfunctors $\Def^i_k$ are isomorphic \cite{BW}.  

The approach via dgl pairs to cohomology jump loci is successful in the situation when the pair $(C,M)$ is equivalent to its cohomology pair $(HC,HM)$, that is, when  $(C,M)$ is a {\it formal} dgl pair. Typically $(HC,HM)$ is finite-dimensional. In addition, the vanishing of the differentials in $(HC,HM)$ translates into a beautiful answer for the deformation problem with cohomology constraints in terms of the linear algebra of the multiplication map $HC\otimes HM\rightarrow HM$: the cohomology jump loci $\cV^i_k$ are locally given at $\rho$ by  certain determinantal ideals, the so-called {\it cohomology jump ideals}, depending on $i$ and $k$ of the matrices of linear forms corresponding to the above multiplication map. In particular, if $\rho$ is generic in the sense that $\rho\in\cV^i_k\setminus\cV^i_{k+1}$, then locally at $\rho$ the locus $\cV^i_k$ is cut out by linear equations from the quadratic space $\cM_{\rho}$. Thus $(\cV^i_k)_\rho$ also has quadratic singularities in this case, see \cite{BW}.

The ``generic'' loci $\{\cV^i_k\setminus\cV^i_{k+1}\mid i\in \bZ\}$ --that is, deformations with the same cohomology-- are equivalently handled by Manetti's deformation functor $\Def_\chi$ {in  \cite{M-a}, where $\chi:C\ra \End (M)$ is a dgla morphism with $(C,M)$ a dgl pair.} However, the non-generic structure of the scheme $\cV^i_k$ is much more complicated even if generically it is not. In fact, in Theorem \ref{propTDef} we will see that away from the generic locus, the Zariski tangent spaces of $\cV^i_k$ are the full tangent space  of the ambient moduli.

\subsection{Cohomology jump loci via \texorpdfstring{$L_\infty$}- pairs.}
\label{subLin} 
How does one describe locally the cohomology jump loci over a field of characteristic zero in the absence of formality for the controlling dgl pair? We provide the  answer in terms of $L_\infty$ pairs. More precisely, we develop the theory of cohomology jump subfunctors $$\Def^i_k(L,M)\subset\Def(L)$$ for any pair $(L,M)$ consisting of an $L_\infty$ algebra $L$ together with an $L_\infty$ module $M$, such that it extends the theory for dgl pairs from \cite{BW}. This combines the higher $L_\infty$ multiplication maps with the theory of cohomology jump ideals. The main result is:

\begin{theorem}
\label{thmMainLDef} Let $(C,M)$ be a dgl pair over a field of characteristic zero. On the associated cohomology pair $(HC,HM)$ there exists an $\Linf$ pair structure with zero differentials and with second-order multiplication maps induced from $(C,M)$, unique up to isomorphisms of $\Linf$ pairs, such that $(C,M)$ and $(HC,HM)$ are $\Linf$ equivalent. Moreover, the cohomology jump subfunctors of the $\Linf$ pair $(HC,HM)$
$$
\Def^i_k(HC,HM)\subset \Def(HC)
$$
are naturally isomorphic to the cohomology jump subfunctors of the dgl pair $(C,M)$
$$
\Def^i_k(C,M)\subset \Def(C)
$$
for all $i,k\in\bZ$.
\end{theorem}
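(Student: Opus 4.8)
The plan is to combine homotopy transfer for $\Linf$ pairs with the invariance of cohomology jump ideals under quasi-isomorphisms of complexes of free modules.

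\emph{Transfer.} Working over the ground field, choose a decomposition of the complex underlying $C$ into its cohomology and a contractible subcomplex; this yields contraction data $(i_C,p_C,h_C)$ with $p_Ci_C=\id$, $\id_C-i_Cp_C=dh_C+h_Cd$, and the side conditions $h_C^2=0$, $p_Ch_C=0$, $h_Ci_C=0$. Choose similarly contraction data $(i_M,p_M,h_M)$ for $M$ onto $HM$. Now apply the homotopy transfer theorem (Kadeishvili's minimality theorem together with its $\Linf$ version) to the dgla $C\ltimes M$ in which $M$ is an abelian ideal and the action of $C$ on $M$ is the only nonzero cross-bracket, using the retraction $i_C\oplus i_M$, $p_C\oplus p_M$, $h_C\oplus h_M$; this is where the hypothesis $\mathrm{char}=0$ enters, in the symmetrization of the tree formula. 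The transferred $\Linf$ structure on $H(C\ltimes M)=HC\oplus HM$ has zero differential and quadratic part the bracket and action induced from $(C,M)$; moreover every rooted tree with two or more leaves in $HM$ evaluates to zero, since $[M,M]=0$ and the bracket and the retraction are compatible with the grading that places $M$ in weight one and $C$ in weight zero. Hence the structure restricts to an $\Linf$ algebra $\{l_n\}$ on $HC$ together with an $\Linf$ module $\{m_n\colon(HC)^{\otimes(n-1)}\otimes HM\to HM\}$, i.e.\ to an $\Linf$ pair $(HC,HM)$. The transfer also supplies an $\Linf$ pair morphism $(HC,HM)\to(C,M)$ with linear term $i_C\oplus i_M$, which is a quasi-isomorphism; a quasi-inverse exists by the usual inversion of $\Linf$ quasi-isomorphisms, so $(C,M)$ and $(HC,HM)$ are $\Linf$ equivalent as pairs.

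\emph{Uniqueness.} If $(HC,HM)$ carries two $\Linf$ pair structures with zero differentials and the same quadratic part induced from $(C,M)$, then composing the quasi-isomorphism above for one of them with a quasi-inverse of the one for the other produces an $\Linf$ pair quasi-isomorphism $(HC,HM)\to(HC,HM)$ whose linear term, a self quasi-isomorphism of complexes with zero differential, is the identity. A quasi-isomorphism of minimal $\Linf$ algebras with invertible linear term is invertible, and likewise for minimal $\Linf$ modules over a fixed minimal $\Linf$ algebra with zero differentials; assembling these, the map is an isomorphism of $\Linf$ pairs, which identifies the two structures.

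\emph{Cohomology jump subfunctors.} For $(C,M)$ regarded as an $\Linf$ pair with vanishing higher operations, the $\Linf$ definitions of $\Def$ and of $\Def^i_k$ reduce verbatim to the dgl-pair definitions of \cite{BW}; hence it suffices to show that any $\Linf$ pair quasi-isomorphism $\Phi=(f,g)\colon(L,M)\to(L',M')$ induces a natural isomorphism $\Def(L)\xra{\ \sim\ }\Def(L')$ carrying $\Def^i_k(L,M)$ onto $\Def^i_k(L',M')$, and then to apply this to the $\Phi$ of the first step. For an Artinian local algebra $(A,\mA)$ the morphism $f$ gives the standard bijection $f_*\colon\MC(L\otimes\mA)\to\MC(L'\otimes\mA)$, $\xi\mapsto\sum_{n\ge1}\tfrac{1}{n!}f_n(\xi^{\otimes n})$, compatible with gauge/homotopy equivalence and natural in $A$, hence a natural isomorphism $\Def(L)\cong\Def(L')$. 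For $\xi\in\MC(L\otimes\mA)$ the twisted operator $d^M_\xi=\sum_{n\ge1}\tfrac{1}{(n-1)!}m_n(\xi^{\otimes(n-1)}\otimes-)$ is $A$-linear and square zero on the complex of free $A$-modules $M\otimes A$, and $g$ twists to an $A$-linear chain map $g_\xi\colon(M\otimes A,d^M_\xi)\to(M'\otimes A,d^{M'}_{f_*\xi})$ with leading term $g_1$; filtering $A$ by powers of $\mA$ reduces the acyclicity of the mapping cone of $g_\xi$ to that of the cone of $g_1$, so $g_\xi$ is a quasi-isomorphism. Since the cohomology jump ideals of a complex of free modules over a Noetherian ring are invariant under quasi-isomorphism \cite{BW}, the degree-$i$, level-$k$ jump ideals of $(M\otimes A,d^M_\xi)$ and of $(M'\otimes A,d^{M'}_{f_*\xi})$ agree, so $f_*$ restricts to a bijection $\Def^i_k(L,M)(A)\to\Def^i_k(L',M')(A)$, naturally in $A$. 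Applying this with $\Phi\colon(HC,HM)\to(C,M)$ from the first step yields the claimed natural isomorphisms.

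\emph{Main obstacle.} The homotopy-transfer and minimal-model steps are classical; the delicate part is the last one, namely the bookkeeping of signs and coefficients and, above all, the naturality in $A$ for the twisting of an $\Linf$ module and of an $\Linf$ module morphism by a Maurer--Cartan element over an Artinian base, together with checking that the twisted morphism $g_\xi$ is an honest $A$-linear quasi-isomorphism so that the cohomology-jump-ideal machinery of \cite{BW} applies. This is where the proof will spend most of its effort.
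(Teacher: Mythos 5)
Your proposal is correct and follows essentially the same route as the paper: homotopy transfer applied to the square-zero extension dgla $C\oplus M$ to produce the minimal $\Linf$ pair structure on $(HC,HM)$ with a weak equivalence to $(C,M)$, followed by invariance of the cohomology jump subfunctors under weak equivalences, proved by showing the twisted complexes $(M\otimes A,d_\omega)$ and $(M'\otimes A,d_{\omega'})$ are quasi-isomorphic and invoking the invariance of cohomology jump ideals from \cite{BW}. The only differences are cosmetic: you justify that the twisted module morphism is a quasi-isomorphism directly by the $\mathfrak{m}_A$-adic filtration where the paper routes this through the auxiliary $\Linf$ algebra $L\oplus M$ and a cited twisting result of Lazarev--Yalin, and you spell out the uniqueness-up-to-isomorphism argument, which the paper leaves implicit in its appeal to Theorems \ref{thmTTP} and \ref{cjdf pairs invariance}.
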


The upshot is: given a deformation problem with cohomology constraints 
\be\label{eqDPCC}
\{(\cV^i_k)_\rho\subset\cM_{\rho}\mid i,k\in\bZ \},
\ee 
let  $(C,M)$ be a dgl pair controlling it; then (\ref{eqDPCC}) is the same as the collections of subfunctors defined via the $\Linf$ pair structure
\be\label{eqDPSF}
\{\Def^i_k(HC,HM)\subset \Def(HC) \mid i,k\in\bZ\}.
\ee
If the cohomology pair $(HC,HM)$ is finite-dimensional, which is typically the case in applications, (\ref{eqDPSF}) provides  equations for (\ref{eqDPCC}) in terms of the cohomology jump ideals of the associated $L_\infty$ multiplication maps, up to homotopy equivalence.

We determine the Zariski tangent spaces to the cohomology jump functors:

\begin{theorem}
\label{propTDef}
Let $(C,M)$ be a dgl pair or, more generally, an $\Linf$ pair, over a field of characteristic zero. Let $h_i=\dim H^iM$. The Zariski tangent spaces to the functors 
$$\Def^i_0(C,M)=\Def(C)\supset\ldots\supset\Def^i_k(C,M)\supset\ldots \supset  \Def^i_{h_i+1}(C,M)=\emptyset$$ 
are: the full Zariski tangent space $T\Def(C)=H^1C$ if $k<h_i$; empty if $k>h_i$; and if $k=h_i$, equal to the kernel of the linear map
$$
H^1C\ra \bigoplus_{j=i-1,i}\Hom(H^{j}M,H^{j+1}M)
$$
induced from the module multiplication maps
$
H^1C\otimes H^jM\ra H^{j+1}M.
$
\end{theorem}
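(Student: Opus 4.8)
The plan is to evaluate the functors on the dual numbers $A=\bK[\epsilon]/(\epsilon^2)$, after first reducing to a minimal model; here is a sketch.

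\emph{Reduction to zero differentials.} By Theorem~\ref{thmMainLDef}, and more precisely by the $\Linf$-quasi-isomorphism invariance of the subfunctors $\Def^i_k$ underlying it, one may replace $(C,M)$ by an $\Linf$ pair with vanishing differentials, underlying graded pair $(HC,HM)$, and with binary operation $m_2$ the one induced from $(C,M)$. In this model $Z^1C=H^1C$, $B^1C=0$, $H^jM=M^j$, the integer $h_i$ equals $\dim M^i$, and the linear map of the statement is $\bar\omega\mapsto\bigl(m_2(\bar\omega,-)|_{M^{i-1}},\ m_2(\bar\omega,-)|_{M^i}\bigr)$.

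\emph{The first-order deformed complex and the criterion.} Over $A=\bK[\epsilon]/(\epsilon^2)$, a Maurer--Cartan element of $C\otimes\mathfrak m_A$ is just a class $\bar\omega\in H^1C$ --- the operations $l_n$ with $n\ge2$ produce terms in $\mathfrak m_A^2=0$ and the gauge action is trivial since $B^1C=0$ --- which re-derives $T\Def(C)=H^1C$. For such $\bar\omega$, the associated deformation of the module is the complex of finite free $A$-modules $\bigl(M^\bullet\otimes_\bK A,\ \partial_{\bar\omega}\bigr)$, $\partial_{\bar\omega}=\epsilon\cdot m_2(\bar\omega,-)$, since $d_M=0$ and each higher $\Linf$-module operation $m_{n+1}(\bar\omega\epsilon,\dots,\bar\omega\epsilon,-)$ with $n\ge2$ is divisible by $\epsilon^2=0$. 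Unwinding the definition of the cohomology jump subfunctors via cohomology jump ideals, $\bar\omega\in T\Def^i_k(C,M)$ if and only if the $k$-th cohomology jump ideal $J^i_k\bigl(M^\bullet\otimes_\bK A,\ \partial_{\bar\omega}\bigr)$ is the zero ideal of $A$.

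\emph{The ideal computation.} One invokes the shape of the cohomology jump ideal from \cite{BW}: for a bounded complex $(P^\bullet,d)$ of finite free modules which is minimal at the point in question and has $\operatorname{rank}P^i=h_i$, the ideal $J^i_k$ is contained in $\mathfrak b^{\,h_i-k+1}$ for $1\le k\le h_i$, where $\mathfrak b=I_1(d^{i-1})+I_1(d^i)$ is generated by the matrix entries of the two adjacent differentials, with $J^i_{h_i}=\mathfrak b$; moreover $J^i_k=A$ for $k>h_i$ and $J^i_k=0$ for $k\le0$ (these last two encoding $0\le\dim H^i\le h_i$). We apply this to $P^\bullet=M^\bullet\otimes_\bK A$ with $d^{j}=\partial_{\bar\omega}=\epsilon\,m_2(\bar\omega,-)$, which is minimal at the point of $\operatorname{Spec}A$ since its entries lie in $\mathfrak m_A=(\epsilon)$ and $(\epsilon)^2=0$. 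For $k>h_i$ one gets $\Def^i_k(A')=\emptyset$ for every Artinian $A'$, so the tangent space is empty. For $0<k<h_i$ one has $h_i-k+1\ge2$, hence $J^i_k\subseteq\mathfrak b^{\,2}\subseteq(\epsilon)^2=0$; thus $J^i_k=0$ for every $\bar\omega$ and the tangent space equals all of $H^1C=T\Def(C)$. For $k=h_i$ one has $J^i_{h_i}=\mathfrak b=\epsilon\cdot\bigl(\text{span of the entries of }m_2(\bar\omega,-)|_{M^{i-1}}\text{ and of }m_2(\bar\omega,-)|_{M^i}\bigr)$, which is the zero ideal of $A$ precisely when both of these maps vanish, i.e.\ precisely when $\bar\omega$ lies in the kernel of the map $H^1C\to\bigoplus_{j=i-1,i}\Hom(H^jM,H^{j+1}M)$ of the statement. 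Together with $\Def^i_0=\Def(C)$ and $\Def^i_{h_i+1}=\emptyset$, this proves the theorem.

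\emph{Main difficulty.} The delicate point is the reduction to the $\Linf$ minimal model: for a non-minimal pair the deformed differential is $d_M+\epsilon\,m_2(\bar\omega,-)$, whose entries need not lie in $\mathfrak m_A$, and $H^i$ does not commute with $\otimes_\bK A$, so the dichotomy ``$J^i_k\subseteq\mathfrak b^{\,2}$ for $k<h_i$, and $J^i_{h_i}=\mathfrak b$'' is not directly visible; it becomes transparent only after passing to $(HC,HM)$, which is exactly what the $\Linf$-invariance of the collection $\{\Def^i_k\}$ developed in this paper provides.
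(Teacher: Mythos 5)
Your proposal is correct and follows essentially the same route as the paper's proof: reduce to the minimal $\Linf$ pair $(HC,HM)$ via weak-equivalence invariance, evaluate on $\bK[\epsilon]/(\epsilon^2)$ where the twisted differential collapses to $\epsilon\, m_2(\bar\omega,-)$ and no homotopy identifications occur, and then read off the determinantal ideals, noting that any minor of size $\ge 2$ is killed by $\epsilon^2=0$ while the $1\times 1$ minors at $k=h_i$ give exactly the kernel condition on $m_2$. The only cosmetic difference is that you package the final step as ``$J^i_k\subseteq\mathfrak b^{\,h_i-k+1}$'' whereas the paper argues directly that products of two matrix entries vanish; the content is identical.
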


In particular, the Zariski tangent spaces of cohomology jump loci $\cV^i_k$ at points in $\cV^i_{k+1}$ are equal to the full Zariski tangent space of the ambient moduli space $\cM=\cV^i_0$. This statement is to our knowledge new for higher rank local systems, but also for vector bundles on higher dimensional varieties.

We use the $\Linf$ pairs theory to give a proof of an open problem on finite dimensional cdga's of Suciu \cite{Su}*{Problem 3.2}:

\begin{theorem}
\label{thrmGTC}
Let $(A=\bigoplus_{i\ge 0}A^i, d)$ be a cdga over the field $\bC$, with $A^0=\bC$ and all $A^i$ finite dimensional. Then the tangent cone at $0$ of the resonance variety $R^i_k(A)$ is contained in the resonance variety $R^{i}_k(H)$ of the cohomology cdga $H=H(A)$. 
\end{theorem}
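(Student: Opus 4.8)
The plan is to reduce Theorem \ref{thrmGTC} to the machinery of $\Linf$ pairs developed above, specializing to the pair built from the cdga $A$ acting on itself. Concretely, fix a rank-one representation direction: the resonance variety $R^i_k(A)$ lives in $H^1(A)$ and, by the discussion in \ref{subDP}, arises as the tangent cone (or the degree-one part of the defining equations) of the cohomology jump locus $\cV^i_k$ attached to the dgl pair $(C,M)$ associated to $A$ — here one takes $C$ to be the dgla obtained from $A$ (e.g.\ via $A\otimes \mathfrak{g}$-type constructions, or directly the dgl pair $(A, A)$ viewed through the module structure), and $M=A$ as a dgl module over it. The constant sheaf / trivial local system corresponds to the origin $0\in H^1(A)$, and the local structure of $\cV^i_k$ at $0$ is governed, by Theorem \ref{thmMainLDef}, by the cohomology jump ideals of the $\Linf$ pair $(HC,HM)=(H(A),H(A))$ with its canonical higher multiplication maps.

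The first step is to make precise that $R^i_k(A)$ equals the tangent cone at $0$ of $\Def^i_k(C,M)$ realized as a scheme germ in $H^1C=H^1(A)$: this is essentially the content of how resonance varieties are extracted from the controlling pair, combined with the finite-dimensionality hypothesis on $A$ which guarantees $\Def^i_k(C,M)$ is pro-represented by an honest local ring. The second step is to compare two ideals in the power series ring on $H^1(A)$: the ideal $I_A$ cutting out $\Def^i_k(C,M)$ (built from the full $\Linf$ multiplication maps $m_n$, $n\ge 2$, which encode Massey products of $A$) and the ideal $I_H$ cutting out $\Def^i_k(HC,HM)=R^i_k(H)$ viewed with only its quadratic part, i.e.\ using $m_2$ alone. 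The key observation is that $I_H$ is generated by the \emph{initial (lowest-degree, hence quadratic) forms} of a set of generators of $I_A$: the cohomology jump ideal of the $\Linf$ pair, expanded in the Maurer–Cartan/deformation parameter, has leading term exactly the cohomology jump ideal computed from $m_2=$ cup product, which is by definition the ideal of $R^i_k(H)$. From $\mathrm{in}_0(I_A)\subseteq I_H$ one gets the containment of tangent cones $TC_0\big(V(I_A)\big)\subseteq V(\mathrm{in}_0(I_A)) \supseteq$... — more carefully, $TC_0(V(I_A))=V(\mathrm{in}_0(I_A))$ and $\mathrm{in}_0(I_A)\supseteq (\text{quadratic parts of generators})$, whence $V(\mathrm{in}_0(I_A))\subseteq V(\text{quadratic parts})=R^i_k(H)$, giving the theorem. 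Here one must be slightly careful: $\mathrm{in}_0$ of the \emph{ideal} can be strictly larger than the ideal generated by initial forms of chosen generators, but since we only need a \emph{containment} $R^i_k(A)=TC_0\subseteq R^i_k(H)$, and $R^i_k(H)$ is defined by the quadratic generators, the containment $\mathrm{in}_0(I_A)\subseteq I_H$ in the right direction is what we verify — every element of $I_A$ has its lowest-degree part landing in the determinantal ideal generated by the cup-product forms, because setting the higher $m_n$ to zero is exactly passing to leading order.

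The step I expect to be the main obstacle is the precise bookkeeping in the second step: showing that the leading-order term of each generator of the cohomology jump ideal of the $\Linf$ pair $(H(A),H(A))$ — these generators being the $k\times k$ minors of a matrix whose entries are the $\Linf$-twisted differentials $\sum_{n\ge 2} m_n(\omega,\dots,\omega,-)$ on $H(A)$ — coincides with the corresponding minor of the cup-product matrix $\omega\cup(-)$ defining $R^i_k(H)$. This requires unwinding the definition of $\Def^i_k$ for $\Linf$ pairs from \ref{subLin}, checking that the weight/degree grading in the deformation parameter $\omega\in H^1$ assigns $m_n(\omega,\dots,\omega,-)$ degree $n-1$, so that $m_2$ is the unique linear-in-$\omega$ piece, and then invoking standard facts about initial forms of determinantal ideals (the minors of the leading form of a matrix generate the initial ideal of the minors, up to the containment we need). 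A secondary subtlety is ensuring the identification of $R^i_k(A)$ — defined by Suciu directly in terms of $A$ and its differential — with the tangent cone of the cohomology jump functor; this is where one cites the equivalence, from \ref{subDP} and \cite{BW}, between the scheme structure on $\cV^i_k$ coming from cohomology jump ideals and the classical resonance/characteristic variety picture, together with the finite-dimensionality of $A$ to rule out convergence issues.
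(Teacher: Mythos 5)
Your proposal follows essentially the same route as the paper's proof: identify the germ of $R^i_k(A)$ at $0$ with the scheme germ pro-representing $\Def^i_k(A,A)$ via \cite{BW}*{Corollary 2.5}, transfer to the $\Linf$ pair $(H,H)$ by Theorem \ref{thmMainLDef}, grade the entries of the universal twisted differential by their degree $n$ in $\omega$, and observe that each minor of the linearized ($\mu_2$-only) matrix is an initial form of the corresponding minor of the full matrix, so that $J^i_k(H\otimes\cO,\mu_{2,univ})$ is contained in the ideal of initial forms and the tangent cone lands inside $R^i_k(H)$. The only blemishes are wording slips — $R^i_k(A)$ itself, not its tangent cone, is the germ realizing $\Def^i_k(A,A)$, and your first statement of the inclusion $\mathrm{in}_0(I_A)\subseteq I_H$ is reversed — but you self-correct both, and the degree bookkeeping you flag as the main obstacle is exactly the paper's central computation.
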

Here $H^1$ is identified with the 1-cocycles of $(A,d)$, and 
\begin{align*}
R^i_k(A) & :=\{a\in H^1\mid \dim H^i(A,d+a)\ge k\},\\
R^i_k(H) & :=\{a\in H^1\mid \dim H^i(H,a)\ge k\}.
\end{align*}

In a sequel to this article, we will apply the $\Linf$ pairs theory developed here to Brill-Noether loci of vector bundles on smooth projective varieties.


 
\subsection{Organization.} Section \ref{sec2} is contains the main definitions and properties of $\Linf$ pairs, the material being standard or straight-forward if new. In Section \ref{def functors section}, the technical core of the article, we introduce the cohomology jump functors of an $\Linf$ pair, and prove Theorems \ref{thmMainLDef}, \ref{propTDef}, and \ref{thrmGTC}. Section \ref{sec4} contains the applications to local systems, namely the proofs of Theorems \ref{corW}, \ref{thrmLink}, \ref{thrmMFiber}, and \ref{thrmLoc}.

\subsection{Acknowledgement.} We thank  D. Arapura, J. Cirici, R. Hain, J. Koll\'ar, Y. Liu, D. Petersen, M. Saito, B. Shoikhet, A. Suciu, S. Yalin, B. Wang, M. Zambon, and the referees for comments and discussions. The authors were partly supported by the grant STRT/13/005 from KU Leuven, the Methusalem grant METH/15/026, and the grants G0B2115N, G097819N, G0F4216N from FWO.

\section{\texorpdfstring{$L_\infty$}~ pairs}\label{sec2}

Throughout this paper $\bK$ is a fixed field of characteristic zero. $\texttt{Art}$ denotes the category of commutative Artinian local finite type $\bK$-algebras. All the vector spaces we consider are over $\bK$ unless mentioned otherwise. By a complex we mean a cochain complex.

\subsection{$\Linf$ algebras} We review basic notions, see \cites{LaMa94,Kon03, LodVall12}.

\begin{notation}
For a graded vector space $L=\bigoplus_{i\in\mathbb{N}}L^i$ and a homogeneous element $a\in L$, we denote by $|a|$ the degree of $a$. For two homogeneous elements $a,b\in L$, the Koszul sign of their transposition is defined as $(-1)^{|a||b|}$. More generally, for $n$ homogeneous elements $a_1,\ldots , a_n\in L$, and any $n$-permutation $\sigma$, the {\it Koszul sign} $\chi(\sigma)=\chi(\sigma; |a_1|,\ldots,|a_n|)$ is the product of the Koszul signs of the transpositions necessary to permute $(a_{\sigma(1)},\ldots, a_{\sigma(n)})$ to $(a_1,\ldots,a_n)$.
\end{notation}

\begin{definition}
A {\it graded antisymmetric multilinear map} on a graded vector space $L $ is a linear map of graded vector spaces $$l_n: L^{\otimes n} \rightarrow L$$  such that $$
l_n(a_{\sigma(1)}, \dots, a_{\sigma(n)})= \chi(\sigma)l_n( a_1, \dots, a_n)
$$
for every $n$-permutation $\sigma$ and homogeneous $a_i\in L$, where we write commas instead of tensor symbols between the $a_i$.
\end{definition}

\begin{definition}
An {\it $L_\infty$ algebra}  is a graded vector space $L$ together with a collection of  graded antisymmetric multilinear maps 
$$
\{l_n: L^{\otimes n} \rightarrow L\}_{n\geq1}
$$ called the higher-order $\Linf$ multiplication maps, such that $l_n$ has degree $2-n$ and the generalized Jacobi identity
$$
\sum_{i+j = n+1} \sum_{\sigma \in \Sigma{(i,n-i)}} \chi(\sigma)(-1)^{i(j-1)} l_j(l_i(a_{\sigma(1)}, \dots, a_{\sigma(i)}), a_{\sigma(i+1)}, \dots, a_{\sigma(n)}) = 0
$$
holds for any homogeneous elements $a_1,\ldots, a_n\in L$, where $\Sigma{(i, n-i)}$ is the set of $(i, n-i)$-unshuffles with $i\ge 1$, that is, the subset of $n$-permutations $\sigma$  such that $\sigma(1) < \cdots < \sigma(i)$ and $\sigma(i+1) <\cdots<\sigma(n)$. 
\end{definition}

For a simpler notation, we will denote 
$$\mathfrak{S}_{n}=\{   (i,j,\sigma) \mid  \sigma \in \Sigma(i,n-i), i\ge 1, \text{ and }  i+j = n+1\}$$ 
$$I = (1, \dots, n), \quad  a_{\sigma(I)} = (a_{\sigma(1)}, \dots, a_{\sigma(n)}),$$ and  the generalized Jacobi identity as:
$$
\sum_{(i,j,\sigma) \in \mathfrak{S}_{n}} \chi(\sigma)(-1)^{i(j-1)} l_j (l_i \otimes \textbf{1}^{\otimes (j-1)})(a_{\sigma(I)}) = 0.
$$

By definition, $(L,l_1)$ is a complex and we will denote by $HL$ the cohomology of $(L,l_1)$.



\begin{definition}\label{inf-morphism}
A {\it morphism of $L_\infty$ algebras} $f: L\ra L'$ is a collection of degree ${1-k}$   graded antisymmetric multilinear maps
$$
\{f_k: L^{\otimes k}\rightarrow L'\}_{ k \geq1}
$$
such that for any  $a_I=(a_1,\ldots,a_n)$ with $a_i\in L$ homogeneous and $n\ge 1$,
\begin{align*}
\sum_{(i,j,\sigma) \in \mathfrak{S}_{n}} \chi(\sigma) (-1)^{i(j-1)} & f_j(l_i \otimes \textbf{1}^{\otimes (j-1)})(a_{\sigma(I)}) = \\
& = \sum_{(k_1,\ldots,k_j,\tau) \in \mathfrak{S}_{j,n}}{\chi(\tau)}(-1)^\varepsilon l'_j(f_{k_1}\otimes \cdots \otimes f_{k_j})(a_{\tau(I)}),
\end{align*}
where: $l$ and $l'$ are the structure operations on $L$ and $L'$ respectively;
 $\mathfrak{S}_{j,n}$ is the set of tuples $(k_1,\ldots,k_j,\tau)$ with $k_i\ge 1$, $k_1+\ldots +k_j=n$, and $\tau$ is an $n$-permutation which preserves the order within each block of length $k_i$; and 
 $$
 \varepsilon = (j-1)(k_1-1)+(j-2)(k_2-1)+\ldots +2(k_{j-2}-1)+(k_{j-1}-1).
 $$
\end{definition}

\begin{definition}\label{defWE}
A morphism of $\Linf$ algebras $f: L \rightarrow L'$ is a {\it weak equivalence} if the map of complexes $f_1: (L,l_1) \rightarrow (L',l'_1)$ is  a quasi-isomorphism. 
\end{definition}

\subsection{$A_\infty$ algebras}\label{secAinf}

A source of $\Linf$ algebras are the $A_\infty$ algebras.  We follow \cites{Keller,LaMa94} to recall some definitions and facts. 

\begin{defn}
An \emph{$\Ainf$ algebra} is a $\mathbb{Z}$-graded vector space $A=\bigoplus_{r\in{\mathbb{N}}}A^r$ together with a collection of degree $2-n$ multilinear maps 
\[\left\{\nu_n: A^{\otimes n} \rightarrow A\right\}_{n\geq 1}\]
that satisfy the generalized associative relation:
\[\sum_{p+q+r=n}(-1)^{p+qr}\nu_{p+r+1}(\textbf{1}^{\otimes p}\otimes \nu_q\otimes \textbf{1}^{\otimes r})=0\]
for $n\geq 1$. When applied to elements, these formulas acquire extra signs according to the Koszul rule.

A \emph{morphism of $\Ainf$ algebras} $f:(A,\nu)\rightarrow (B,\nu')$ is a collection of degree $1-n$ multilinear maps
\[\left\{f_n: A^{\otimes n} \longrightarrow B\right\}_{n\geq 1}\]
satisfying the relation
\[\sum_{\substack{p+q+r=n\\p+r+1=k}} (-1)^{p+qr} f_{k} (\textbf{1}^{\otimes p} \otimes \nu_{q}\otimes \textbf{1}^{\otimes r}) = \sum_{i_1+\cdots+i_k=n} (-1)^{\varepsilon}\nu'_{k}(f_{i_1}\otimes\dots\otimes f_{i_k})\]
for $k,n \geq 1$, where $\varepsilon$ is the same as in Definition \ref{inf-morphism}, i.e.
\[ \varepsilon = (k-1)(i_1-1)+(k-2)(i_2-1)+\cdots+2(i_{k-2}-1)+(i_{k-1}-1).\]
We say that an $\Ainf$ morphism is a \emph{weak equivalence} if $f_1$ is a quasi-isomorphism.
\end{defn}


\begin{prop}
\label{Ainf to Linf}\cite{LaMa94}
Given an $\Ainf$ algebra structure $\{\nu_n\}$ on a graded vector space $A$, one can associate to it an $\Linf$ algebra structure $\{l_n\}$ on $A$: for $a_1,\dots,a_n\in A$,
\[ l_n(a_1,\dots,a_n) :=  \sum_{\sigma}\chi(\sigma)\nu_n(a_{\sigma(1)},\dots,a_{\sigma(n)}),\]
where the sum is over all $n$-permutations. This correspondence defines a functor from the category of $\Ainf$ algebras to that of $\Linf$ algebras preserving weak equivalences.
\end{prop}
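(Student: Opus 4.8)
The plan is to verify directly that the antisymmetrized maps form an $\Linf$ algebra and that antisymmetrization is functorial and preserves weak equivalences, proceeding in four stages: (i) the $l_n$ are graded antisymmetric maps of the correct degree; (ii) they satisfy the generalized Jacobi identities; (iii) antisymmetrizing an $\Ainf$ morphism yields an $\Linf$ morphism, compatibly with composition and identities; (iv) the resulting functor preserves weak equivalences. For stage (i), each summand $\chi(\sigma)\nu_n\circ\sigma$ has degree $2-n$, so $l_n$ has degree $2-n$; graded antisymmetry of $l_n$ follows from the cocycle identity for Koszul signs, $\chi(\tau\sigma;|a_1|,\dots,|a_n|)=\chi(\tau;|a_{\sigma(1)}|,\dots,|a_{\sigma(n)}|)\,\chi(\sigma;|a_1|,\dots,|a_n|)$, applied after reindexing the defining sum over the coset $\tau S_n$. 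Note that for $n=1$ the symmetric group is trivial, so $l_1=\nu_1$; in particular the underlying complexes $(A,l_1)$ and $(A,\nu_1)$ coincide, and the same will hold for the linear parts of morphisms, which is what makes stage (iv) essentially automatic.

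Stage (ii) is the heart of the argument. Substituting the definitions of $l_i$ and $l_j$ into the left-hand side of the generalized Jacobi identity produces a sum indexed by triples $(i,j,\sigma)\in\mathfrak S_n$ together with, for each, a permutation of the $i$ inputs of $l_i$ and a permutation of the $j$ inputs of $l_j$. Composing an $(i,n-i)$-unshuffle with arbitrary permutations of its two blocks reproduces, with the correct Koszul signs, a sum over all of $S_n$, and permuting the $j$ inputs of $l_j$ moves the slot occupied by the output of the inner operation through all $j$ positions. Reorganizing accordingly, the whole expression becomes, up to a uniform nonzero combinatorial factor, the antisymmetrization over all $\rho\in S_n$ of $\sum_{p+q+r=n}\pm\,\nu_{p+r+1}(\bone^{\otimes p}\otimes\nu_q\otimes\bone^{\otimes r})(a_{\rho(I)})$, where the inner sign is produced by commuting the output of $\nu_q$ past the $p$ arguments to its left, together with the Koszul signs already present. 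The point to check is that this inner sign is exactly $(-1)^{p+qr}$, matching the generalized associativity relation for $\{\nu_n\}$ (with $q=i$, $p+r+1=j$, and the prescribed $(-1)^{i(j-1)}$ absorbed), so that the antisymmetrized sum vanishes term by term in $\rho$; hence the Jacobi identity holds.

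Stage (iii) runs on the same mechanism: antisymmetrizing the components of an $\Ainf$ morphism $\{f_n\}$ gives maps $\tilde f_k$ of degree $1-k$, and the defining identity of an $\Linf$ morphism in Definition \ref{inf-morphism} follows from that of an $\Ainf$ morphism by the same passage from permutations-of-blocks to all permutations, the sign $\varepsilon$ being literally the one occurring in both definitions; one then checks directly that $\widetilde{\id}=\id$ and that antisymmetrization commutes with composition, so we do obtain a functor. Stage (iv) is then immediate from stage (i), since $\tilde f_1=f_1$ while $l_1=\nu_1$ and $l_1'=\nu_1'$. The one genuinely delicate point, and the main obstacle, is the sign and multiplicity bookkeeping in stage (ii): one must confirm that the Koszul signs of the unshuffles, the prescribed signs $(-1)^{i(j-1)}$, and the signs incurred by relocating inner operations conspire to reproduce the associativity signs $(-1)^{p+qr}$, with all factorials introduced by the unnormalized antisymmetrization cancelling uniformly. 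The coalgebraic description of $\Ainf$ and $\Linf$ structures as square-zero degree-one coderivations of the tensor and the cocommutative coalgebras on the shift $A[1]$ can be used as an independent guide to keep these signs under control.
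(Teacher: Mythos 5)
The paper does not prove this proposition; it is quoted directly from Lada--Markl \cite{LaMa94}, so there is no internal proof to compare against. Your argument is precisely the standard antisymmetrization proof from that reference: the coset decomposition of $S_n$ into unshuffles times block permutations turns the generalized Jacobi identity into the full antisymmetrization of the Stasheff identities (the correspondence between index sets is in fact a bijection, so no combinatorial factor arises), and the functoriality and preservation of weak equivalences follow because $l_1=\nu_1$ and the linear part of a morphism is unchanged. The structure is sound; the only incompleteness, which you flag honestly yourself, is that the sign identity in stage (ii) --- that the Koszul signs of the unshuffles, the factors $(-1)^{i(j-1)}$, and the signs from relocating the inner operation reassemble into $(-1)^{p+qr}$ uniformly in $\rho$ --- is asserted rather than computed; carrying that out (or deferring to the coderivation formulation on $A[1]$, where both identities become $D^2=0$ for the induced coderivations on the tensor and cocommutative coalgebras) is what would make the proof complete.
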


\subsection{$\Linf$ modules}

\begin{definition}
\label{L-module} Let $L$ be an $\Linf$ algebra with $l=\{l_n\}_{n\ge 1}$ as structure maps. Let $M$ be a differential graded vector space with differential denoted $m_1$; in this article we always assume that $M$ is bounded above, and has finitely generated cohomology. A structure of (left) {\it $\Linf$ module} over $L$ on $M$ is a collection of  graded linear maps 
$$
\{m_n: L^{\otimes (n-1)}\otimes M \rightarrow M\}_{n\geq1}
$$
such that $m_n$ has degree $2-n$ and 
$$
\sum_{(i,j,\sigma) \in \mathfrak{S}_{n}} \chi(\sigma)(-1)^{i(j-1)}m_j(m_i\otimes \textbf{1}^{\otimes (j-1)})(\xi_{\sigma(I)})=0,
$$
for every homogeneous elements $\xi_1, \dots, \xi_{n-1} \in L$ and  $\xi_n \in M$, subject to the following convention. Note first that by definition if $(i,j,\sigma)\in \mathfrak{S}_{n}$, then either $\sigma(i)=n$ or $\sigma(n)=n$. In the first case one defines
$$
m_j(m_i\otimes \textbf{1}^{\otimes (j-1)})(\xi_{\sigma(I)})=\kappa\cdot m_j(\textbf{1}^{\otimes (j-1)}\otimes m_i)(\xi_{\sigma(i+1)},\ldots, \xi_{\sigma (n)},\xi_{\sigma (1)},\ldots ,\xi_{\sigma (i)}),$$ where $$
\kappa = (-1)^{j-1} \cdot (-1)^{(i+|\xi_{\sigma(1)}|+\cdots+|\xi_{\sigma(i)}|)\cdot(|\xi_{\sigma(i+1)}|+\cdots+|\xi_{\sigma(n)}|)}
$$ according to the Koszul sign convention. In the second case we take $m_i=l_i$.

By definition, $(M,m_1)$ is a complex. We will denote by $HM$ the cohomology of $(M,m_1)$ as a graded vector space.
\end{definition}

\begin{definition}
\label{defMLM}  
A {\it morphism of $\Linf$ modules} $f: (M, m) \rightarrow (M', m')$ between two $L$-modules  is a collection of degree ${1-k}$ graded antisymmetric multilinear maps
$$
\{f_k: L^{\otimes (k-1)}\otimes M \rightarrow M'\}_{k \geq1}
$$
such that  for $n\geq1$
\begin{align*}
\sum_{(i,j,\sigma) \in \mathfrak{S}_{n}} \chi(\sigma)(-1)^{i(j-1)} f_j & (m_i\otimes \textbf{1}^{\otimes (j-1)})(\xi_{\sigma(I)}) =\\
& \sum_{(k_1,\ldots,k_j,\tau) \in \mathfrak{S}_{j,n}}{\chi(\tau)}(-1)^\varepsilon m'_j(f_{k_1}\otimes \cdots \otimes f_{k_j})(\xi_{\tau(I)})
\end{align*}
for all homogeneous $\xi_1,\ldots,\xi_{n-1}\in L$ and $\xi_n\in M$, with the conventions as in Definition \ref{L-module}, and with $\varepsilon$ as in Definition \ref{inf-morphism}.
\end{definition}



\begin{theorem} \cite{Lad04} \label{L oplus M structure}
Let $L$ be an $L_\infty$ algebra and $M$ an $L$-module. Then, the graded vector space $L\oplus M$ inherits a canonical $L_\infty$ structure given by the collection of degree $2-n$ graded antisymmetric multilinear maps
$$
\{ j_n : (L\oplus M)^{\otimes n} \rightarrow L\oplus M\}_{n\geq1},
$$
satisfying the following relation for homogeneous elements $(a_i,\xi_i)\in L\oplus M$:
\begin{displaymath}
\begin{split}
&j_n\big((a_1, \xi_1), \dots, (a_n, \xi_n)\big) \hspace{5cm}\\
&=\left(l_n(a_1, \dots, a_n), \sum_{i=1}^{n} (-1)^{n-i+|\xi_i| \sum_{k=i+1}^{n} |a_k|} m_n(a_1, \dots, \hat{a_i}, \dots, a_n, \xi_i)\right),
\end{split}
\end{displaymath}
where $\hat{a_i}$ refers to omitting $a_i$ in the list.
\end{theorem}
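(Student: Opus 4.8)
The plan is to treat $L\oplus M$ as a square-zero extension of $L$ by the abelian ideal $M$, and to verify the generalized Jacobi identities for the maps $j_n$ by projecting them onto the two summands $L$ and $M$. Two preliminaries are immediate: $j_n$ has degree $2-n$ because $l_n$ and $m_n$ do; and $j_n$ is graded antisymmetric — on the $L$-component this is the antisymmetry of $l_n$, and on the $M$-component it follows from the graded antisymmetry of $m_n$ in its $L$-arguments together with the signs $(-1)^{n-i+|\xi_i|\sum_{k>i}|a_k|}$, which are exactly the d\'ecalage signs for antisymmetrizing the single-module-slot map $m_n$ over the $n$ positions of the $M$-input (in particular $j_n$ vanishes once two inputs lie in $M$, which is the square-zero condition). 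So the only thing to check is the generalized Jacobi identity $\sum_{(i,j,\sigma)\in\mathfrak{S}_n}\chi(\sigma)(-1)^{i(j-1)}\,j_j(j_i\otimes\mathbf{1}^{\otimes(j-1)}) = 0$ evaluated on homogeneous $(a_1,\xi_1),\dots,(a_n,\xi_n)$.

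Projecting onto $L$: since every $m_\bullet$ takes values in $M$, the only terms of $j_j(j_i\otimes\mathbf{1}^{\otimes(j-1)})$ surviving the projection to $L$ are those in which both the inner and the outer operation act through their $l$-parts; collecting them recovers verbatim the generalized Jacobi identity for the $L_\infty$ algebra $L$, which holds by hypothesis.

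Projecting onto $M$ is the heart of the matter. A term $j_j(j_i\otimes\mathbf{1}^{\otimes(j-1)})((a,\xi)_{\sigma(I)})$ has nonzero $M$-component only if exactly one of the $n$ inputs, say the one in slot $\ell$, contributes its module part $\xi_\ell$ while all others contribute Lie parts. There are two sub-cases, according to whether slot $\ell$ is absorbed by the inner operation $j_i$ (so $j_i$ is an $m_i$ landing in $M$, and then $j_j=m_j$ receives that $M$-element in its first argument, which must be Koszul-transported to the last slot) or by the outer operation (so $j_i=l_i$ lands in $L$ and $j_j=m_j$ receives $\xi_\ell$ directly). Fixing $\ell$ and summing over all $(i,j)$ and all $(i,n-i)$-unshuffles $\sigma$, these two sub-cases match exactly the two cases $\sigma(i)=n$ and $\sigma(n)=n$ of Definition \ref{L-module}, with $\xi_\ell$ in the role of $\xi_n$ after relabeling the remaining inputs in order, and the transport sign is precisely the convention sign $\kappa$ recorded there. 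Hence for each $\ell$ the corresponding partial sum is the left-hand side of the module relation of Definition \ref{L-module}, which vanishes; summing over $\ell$ finishes the verification.

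The main obstacle is purely the sign bookkeeping: matching $\chi(\sigma)$, the exponents $i(j-1)$, the d\'ecalage signs inside $j_n$, and the convention sign $\kappa$. The cleanest way to keep these under control is to run the whole argument through the coalgebra picture: an $L_\infty$ structure on a graded space $V$ is a degree-$1$ square-zero coderivation $Q$ of the reduced cofree cocommutative coalgebra on $V[1]$, with the structure maps read off the corestrictions via d\'ecalage. For $V=L\oplus M$, restricting to the quotient coalgebra involving at most one tensor factor from $M[1]$ — which encodes precisely the square-zero-ideal condition — the assertion becomes $\widetilde Q^2=0 \Leftrightarrow Q_L^2=0$ together with the module relations, where $\widetilde Q$ is built from $Q_L$ and the shifts of the $m_n$. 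After the shift all operations have degree $+1$ and the Koszul signs trivialize; one reinstates the stated unshifted signs only at the very end, and the explicit formula for $j_n$ in the statement is exactly the translation of the corestrictions of $\widetilde Q$.
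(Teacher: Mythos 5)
The paper does not prove this statement at all --- it is quoted from Lada \cite{Lad04}, where the semidirect-product $L_\infty$ structure on $L\oplus M$ is essentially taken as the \emph{definition} of an $L_\infty$ module --- so there is no in-paper proof to compare against. Your argument is the standard verification and is correct in outline: the degree count and the square-zero property (vanishing of $j_n$ on two or more purely-$M$ inputs) are immediate from the formula; the $L$-projection of the generalized Jacobi identity is verbatim the Jacobi identity for $L$; and the $M$-projection, for each choice of the unique slot contributing its module part, reorganizes --- via the internal transport sum already built into the formula for $j_i$ when the module slot sits inside the inner operation --- into the module relation of Definition \ref{L-module}, whose $\kappa$ convention is exactly the transport sign you describe. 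Deferring the sign bookkeeping to the shifted coalgebra picture, where all operations have degree $+1$ and the identity becomes $\widetilde{Q}^2=0$ on the quotient coalgebra with at most one $M[1]$ factor, is a legitimate and clean way to close that gap. One small point worth making explicit: your antisymmetry argument for $j_n$ needs $m_n$ to be graded antisymmetric in its $L^{\otimes(n-1)}$ arguments, which Definition \ref{L-module} does not state (it only says ``graded linear maps''); this is implicitly assumed throughout the paper and in \cite{Lad04}, but you should flag it as a hypothesis rather than derive it, since without it the stated $j_n$ would not be graded antisymmetric and the theorem would fail as written.
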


It is sometimes convenient to switch from this algebra back to the module:

\begin{proposition}\label{module structure recovered} Let $L$ be an $L_\infty$ algebra and $M$ an $L$-module. The structure maps $j_n$ of the $L_\infty$-algebra $L\oplus M$ as above recover the structure maps $m_n$ of $M$.
\end{proposition}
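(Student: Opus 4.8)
The plan is to extract the module operations $m_n$ from the algebra operations $j_n$ of $L \oplus M$ by restricting $j_n$ to tensors that contain exactly one factor from $M$ and $n-1$ factors from $L$, and then reading off the $M$-component of the output. Concretely, given homogeneous $a_1,\dots,a_{n-1} \in L$ and $\xi \in M$, I would consider the elements $(a_1,0),\dots,(a_{n-1},0),(0,\xi)$ of $L \oplus M$ and compute $j_n\big((a_1,0),\dots,(a_{n-1},0),(0,\xi)\big)$ using the explicit formula in Theorem \ref{L oplus M structure}.

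First I would observe that in the defining sum $\sum_{i=1}^n (-1)^{n-i+|\xi_i|\sum_{k=i+1}^n|a_k|} m_n(a_1,\dots,\hat a_i,\dots,a_n,\xi_i)$, every term with $i < n$ vanishes: such a term involves $m_n$ applied to a list whose $i$-th "slot" carries $\xi_i = 0$ (since only the last input has nonzero $M$-component), and $m_n$ is linear, so the term is zero. Hence only the $i = n$ term survives. For that term $\hat a_n$ is omitted, the remaining $L$-inputs are $a_1,\dots,a_{n-1}$, the $M$-input is $\xi_n = \xi$, the exponent $\sum_{k=n+1}^n |a_k|$ is an empty sum equal to $0$, and $(-1)^{n-i} = (-1)^0 = 1$. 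Therefore the $M$-component of $j_n\big((a_1,0),\dots,(a_{n-1},0),(0,\xi)\big)$ equals exactly $m_n(a_1,\dots,a_{n-1},\xi)$, while the $L$-component is $l_n(a_1,\dots,a_{n-1},0) = 0$ by multilinearity of $l_n$. I would also note as a sanity remark that the antisymmetry of $j_n$ together with this identity recovers the full collection $m_n$ on all of $L^{\otimes(n-1)} \otimes M$, not just on the chosen ordering, since permuting the inputs and tracking Koszul signs is exactly the convention built into Definition \ref{L-module} (the case $\sigma(i) = n$ there encodes moving the $M$-factor through the $L$-factors).

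There is no serious obstacle here; the statement is essentially a bookkeeping verification. The one point that deserves care is matching the sign conventions: one should check that the Koszul signs appearing when one permutes an arbitrary tensor in $L^{\otimes(n-1)} \otimes M$ so that the $M$-factor sits last agree with the sign $(-1)^{n-i+|\xi_i|\sum_{k>i}|a_k|}$ in the formula of Theorem \ref{L oplus M structure}, and with the constant $\kappa$ in Definition \ref{L-module}. This is a direct comparison of the Koszul rule applied in both places, and it goes through. I would conclude by remarking that the same restriction procedure applied to an $L_\infty$-morphism $L \oplus M \to L' \oplus M'$ fixing $L$ recovers a morphism of $L$-modules, so the passage $M \mapsto L \oplus M$ is, as expected, compatible with morphisms — though this is not needed for the statement as phrased.
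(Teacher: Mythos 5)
Your proposal is correct and follows essentially the same route as the paper: evaluate $j_n$ on a tuple whose last entry is $(0,\xi)$, observe that only the $i=n$ summand of the $M$-component survives and that the $L$-component $l_n(a_1,\dots,a_{n-1},0)$ vanishes, and read off $m_n(a_1,\dots,a_{n-1},\xi)$. The only (immaterial) difference is that the paper feeds in $(a_1,\xi),\dots,(a_{n-1},\xi),(0,\xi)$ and kills the $i<n$ terms via the zero in the last $L$-slot, whereas you feed in $(a_1,0),\dots,(a_{n-1},0),(0,\xi)$ and kill them via $\xi_i=0$; both are valid.
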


\begin{proof}
We take the map $j_n: (L\oplus M)^{\otimes n} \rightarrow L\oplus M$ and consider the assignment $$j_n\big((a_1, \xi), (a_2, \xi), \dots, (a_{n-1},\xi), (0,\xi)\big)\in L\oplus M.$$
Then, by Theorem \ref{L oplus M structure}, 
\begin{displaymath}
\begin{split}
& j_n\big((a_1, \xi), (a_2, \xi), \dots, (a_{n-1},\xi), (0,\xi)\big) \hspace{3.8cm}\\
&=\left(l_n(a_1, \dots, a_{n-1},0), \sum_{i=1}^{n} (-1)^{n-i}(-1)^{|\xi| \sum_{k=i+1}^{n} |a_k|} m_n(a_1, \dots, \hat{a_i}, \dots, a_{n-1}, 0, \xi)\right).
\end{split}
\end{displaymath}
Hence the first component vanishes, and the second component collects the only possibly-nonzero summand when $i=n$. Thus one obtains the following element in $L\oplus M$: 
$$
\big(0, m_n(a_1, \dots, a_{n-1}, \xi)\big).
$$
If we restrict it to the second component, we recover the structure map $m_n$.
\end{proof}

\begin{definition} A  morphism  of $L$-modules $f: M \rightarrow N$ is a {\it weak equivalence} if its first component $f_1: (M, m_1) \rightarrow (N, n_1)$ is a quasi-isomorphism of complexes.\end{definition}

\subsection{{$L_\infty$} pairs}

\begin{definition}\label{defLpair}
An {\it $L_\infty$ pair} is an $L_\infty$ algebra $L$ together with an $L$-module $M$. We denote such pair by $(L,M)$. A {\it morphism of $L_\infty$ pairs} between $(L,M)$ and $(L',M')$ is a tuple $(f,g)$ where $f:L\rightarrow L'$ is a morphism of $\Linf$ algebras and $g: M \rightarrow M'$ is a morphism of $L$-modules, where $M'$ is regarded as an $L$-module via $f$.  We say that a morphism $(f,g)$ of $\Linf$ pairs is {a} {\it weak equivalence} if $f$ and and $g$ are  weak equivalences. 
\end{definition}

\begin{prop}\label{formal pair iff formal oplus}
(i) A morphism of $\Linf$ pairs $(f,g):(L,M)\rightarrow (L',M')$ induces a morphism of $\Linf$ algebras $f\oplus g: L\oplus M \rightarrow L'\oplus M'$.

(ii) The morphism of $\Linf$ algebras $f\oplus g$ recovers the  morphism of $\Linf$ pairs $(f,g)$.

(iii) $(f,g)$ is a weak equivalence if and only if $f\oplus g$ is a weak equivalence.
\end{prop}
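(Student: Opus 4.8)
The plan is to prove the three assertions in the order stated, leaning on Theorem~\ref{L oplus M structure} and Proposition~\ref{module structure recovered} for the bookkeeping, so that essentially nothing new has to be checked beyond matching up the defining identities.

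For part (i), given $(f,g)$ with $f=\{f_k\}$ and $g=\{g_k\}$, I would define $(f\oplus g)_k\colon (L\oplus M)^{\otimes k}\to L'\oplus M'$ in complete analogy with the formula for $j_n$ in Theorem~\ref{L oplus M structure}, namely sending $\big((a_1,\xi_1),\dots,(a_k,\xi_k)\big)$ to the pair whose first component is $f_k(a_1,\dots,a_k)$ and whose second component is the signed sum $\sum_{i=1}^k (-1)^{k-i+|\xi_i|\sum_{\ell>i}|a_\ell|} g_k(a_1,\dots,\hat a_i,\dots,a_k,\xi_i)$. One then has to verify that this collection satisfies the morphism identity of Definition~\ref{inf-morphism} for the $L_\infty$ algebras $L\oplus M$ and $L'\oplus M'$. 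The key observation is that the $L_\infty$ structure maps $j_n$ on $L\oplus M$ are built from $(l_n,m_n)$ by exactly the same ``split into $L$-component and $M$-component'' recipe, so the morphism identity for $f\oplus g$ decomposes into two families of identities: the $L$-component family is precisely the morphism identity for $f\colon L\to L'$, and the $M$-component family is precisely the morphism identity for $g\colon M\to M'$ as a module morphism (Definition~\ref{defMLM}), once one checks the Koszul signs agree. I would do the sign comparison once, carefully, and then declare the rest routine.

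For part (ii), I would run the same trick used in the proof of Proposition~\ref{module structure recovered}: evaluate $(f\oplus g)_k$ on tuples of the form $\big((a_1,\xi),\dots,(a_{k-1},\xi),(0,\xi)\big)$. The first component then records $f_k(a_1,\dots,a_{k-1},0)$, which vanishes by multilinearity for $k\ge 2$ (and for $k=1$ the first component of $(f\oplus g)_1$ restricted to $L$ recovers $f_1$), while in the second component every summand with $i<k$ contains a slot equal to $0$ and dies, leaving only the $i=k$ term $g_k(a_1,\dots,a_{k-1},\xi)$. Thus projecting to the $M'$-component recovers $g$ and projecting the first component to $L'$ recovers $f$, exactly paralleling Proposition~\ref{module structure recovered}. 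Conversely one also notes $(f\oplus g)$ is determined by $(f,g)$ by construction, so the correspondence in (i) and the recovery in (ii) are mutually inverse.

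Part (iii) is then immediate: by Definition~\ref{defWE} and Definition~\ref{defMLM}, $f\oplus g$ is a weak equivalence iff $(f\oplus g)_1\colon (L\oplus M,j_1)\to(L'\oplus M',j_1')$ is a quasi-isomorphism; but $(f\oplus g)_1=f_1\oplus g_1$ as a map of complexes with respect to the direct-sum differential $j_1=l_1\oplus m_1$, and a direct sum of maps of complexes is a quasi-isomorphism iff each summand is. Hence $f\oplus g$ is a weak equivalence iff both $f_1$ and $g_1$ are quasi-isomorphisms, i.e.\ iff $(f,g)$ is a weak equivalence of $\Linf$ pairs in the sense of Definition~\ref{defLpair}. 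I expect the only genuine obstacle to be the sign verification in part (i): matching the Koszul signs $\varepsilon$ from Definition~\ref{inf-morphism} and the sign $\kappa$ from the module convention in Definition~\ref{L-module} against the signs produced by the $j_n$-formula requires care, but it is a finite, mechanical check with no conceptual content once the decomposition into $L$- and $M$-components is in place.
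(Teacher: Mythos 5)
Your proposal is correct and follows essentially the same route as the paper: the same componentwise formula for $(f\oplus g)_n$ mirroring the $j_n$ of Theorem~\ref{L oplus M structure}, the same decomposition of the morphism identity into the $\Linf$-morphism identity for $f$ and the module-morphism identity for $g$, the same evaluation on tuples $\big((a_1,\xi),\dots,(a_{k-1},\xi),(0,\xi)\big)$ to recover $g$ as in Proposition~\ref{module structure recovered}, and the same direct-sum observation for part (iii). No gaps.
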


\begin{proof}

For (i) let $(f,g):(L,M)\rightarrow (L',M')$ be a morphism of $L_\infty$ pairs. We show that there exists a morphism of $L_\infty$ algebras $f\oplus g: L\oplus M\rightarrow L'\oplus M'$, that is, a collection of degree $1-k$ graded antisymmetric multilinear maps
\[(f\oplus g)_k : (L\oplus M)^{\otimes k} \longrightarrow (L',M')\]
such that they satisfy Definition \ref{inf-morphism}. Define for $n$ homogeneous elements $(a_i,\xi_i)\in L\oplus M$
\begin{align}\label{oplus1}
(f\oplus g)_n (a\oplus \xi) := \left(f_n(a_1,\dots, a_n), \sum_{i=1}^n (-1)^{\Xi(n,i)} g_n (a_1,\dots, \hat{a_i},\dots, a_n, \xi_i)\right),
\end{align}
where ${\Xi(n,i)} := {n-i+|\xi_i| \sum_{k=i+1}^{n} |a_k|}$ gives the appropriate sign when omitting one element and shifting the remaining to the left. To satisfy Definition \ref{inf-morphism} we need that 
\begin{align}\label{oplus2}
\sum_{(i,s,\sigma) \in \mathfrak{S}_{n}} \chi(\sigma) & (-1)^{i(s-1)}  (f\oplus g)_s(j_i \otimes \textbf{1}_{L\oplus M}^{\otimes (s-1)})((a,\xi)_{\sigma(I)}) \notag \\
& = \sum_{(k_1,\ldots,k_s,\tau) \in \mathfrak{S}_{s,n}}{\chi(\tau)}(-1)^\varepsilon j'_s(((f\oplus g)_{k_1}) \otimes \cdots \otimes ((f\oplus g)_{k_s}))((a,\xi)_{\tau(I)}),
\end{align}
where $j$ and $j'$ are the $L_\infty$ algebra structures on $L\oplus M$ and $L'\oplus M'$, respectively, as constructed in Theorem \ref{L oplus M structure}.

Denote by $(l,m)$ the $\Linf$ pair structure on $(L,M)$, and by $(l',m')$ that on $(L',M')$. Looking closer at the left hand side of (\ref{oplus2}), we have
\begin{align*}
\sum_{(i,s,\sigma) \in \mathfrak{S}_{n}} \chi(\sigma) (-1)^{i(s-1)} (f\oplus g)_s  \bigg(\bigg( & l_i(a_{\sigma(1)},\dots, a_{\sigma(i)}),\\
& \sum_{q=1}^{i}(-1)^{\Xi(i,q)} m_i (a_{\sigma(1)},\dots,\hat{a}_{\sigma(q)},\dots, a_{\sigma(i)},\xi_{\sigma(q)})\bigg),\\
& (a_{\sigma(i+1)},\xi_{\sigma(i+1)}),\dots,(a_{\sigma(n)},\xi_{\sigma(n)})\bigg)=
\end{align*}
\begin{align*}
=\sum_{(i,s,\sigma) \in \mathfrak{S}_{n}} & \chi(\sigma) (-1)^{i(s-1)}  \bigg( f_s\bigg(l_i(a_{\sigma(1)},\dots,a_{\sigma(i)}), a_{\sigma(i+1)},\dots,a_{\sigma(n)}\bigg), \\
&\sum_{p=1}^s (-1)^{\Xi(s,p)} g_s \bigg(l_i(a_{\sigma(1)}, \dots,a_{\sigma(i)}),a_{\sigma(i+1)},\dots, \hat a_{\sigma(p)} ,\dots, a_{\sigma(n)}, \xi_{\sigma(p)} \bigg)\bigg).
\end{align*}
Using the fact that $f$ is an $\Linf$ morphism and $g$ an $L$-module morphism, this is equal to
\begin{align*}
\sum_{(k_1,\ldots,k_s,\tau) \in \mathfrak{S}_{s,n}} & {\chi(\tau)}(-1)^\varepsilon 
\bigg({l'}_s(f_{k_1}\otimes \cdots \otimes f_{k_s})(a_{\tau(I)}),\\ 
& \sum_{p=1}^s (-1)^{\Xi(s,p)} {m'}_s (g_{k_1} \otimes \cdots \otimes g_{k_s})(a_{\tau(1)},\dots,\hat a_{\tau(p)}, \dots, a_{\tau(n)}, \xi_{\tau(p)}) \bigg).
\end{align*}
By Theorem \ref{L oplus M structure}, this is the $\Linf$ algebra structure on $L'\oplus M'$, that is, we get the right hand side of (\ref{oplus2}). Thus $f\oplus g$ is a morphism of $\Linf$ algebras.

To prove (ii), note that restricting $f\oplus g$ to the first component in (\ref{oplus1}), gives $f$ back. To recover $g$ we use the same method as in Proposition \ref{module structure recovered}. Namely, consider the assignment
\[(f\oplus g)_n\big((a_1, \xi), (a_2, \xi), \dots, (a_{n-1},\xi), (0,\xi)\big)\in L'\oplus M'.\]
Then, by (\ref{oplus1}), this is  equal to
\[=\left(f_n(a_1,\dots, 0), \sum_{i=1}^n (-1)^{\Xi(n,i)} g_n (a_1,\dots, \hat{a_i},\dots, 0, \xi)\right).\]
Hence, the first component vanishes, and the second component collects the only possible nonzero summands when $i=n$. Thus one obtains the following element in $L'\oplus M'$:
\[ \left( 0, g_n(a_1, \dots, a_{n-1},\xi)\right).\]
Restricting to the second component, we recover the map $g_n$.

Part (iii) follows from Definition \ref{defLpair}. If $(f,g)$ is a weak equivalence, we have that both $f_1$ and $g_1$ are quasi-isomorphisms. By (\ref{oplus1}), $(f\oplus g)_1$ is also a quasi-isomorphism, thus making $f\oplus g$ a weak equivalence by Definition \ref{defWE}. Conversely, let $f\oplus g$ be a weak equivalence. Then again this means that $(f\oplus g)_1$ is a quasi-isomorphism. Since we can restrict at either component of (\ref{oplus1}), we have that both $f_1$ and $g_1$ are quasi-isomorphisms; thus making $(f,g)$ a weak equivalence.
\end{proof}

\begin{rmk} The category of dgl pairs of \cite{BW} is a subcategory of the category of $\Linf$ pairs, but not a full subcategory.
\end{rmk}

\subsection{Transfer theorem for $A_\infty$ algebras with extra gradings.}\label{subAtr} For the long history behind the transfer theorem see \cite{Hu}.

\begin{defn}\label{defnTD}
A {\it homotopy transfer diagram} over $\bK$ is a diagram 
$$
\xymatrix{
A \ar@(lu,ld)_h \ar@<.5ex>[r]^f & \ar@<.5ex>[l]^g B
}
$$
together with the following data:
\begin{itemize}
\item $(A,\mu_1)$ and $(B,\nu_1)$ are cochain complexes of $\bK$-vector spaces,
\item $f:(A,\mu_1)\ra(B,\nu_1)$ and $g:(B,\nu_1)\ra (A,\mu_1)$ are morphisms of complexes,
\item $h$ is a collection of linear maps $h^n:A^n\ra A^{n-1}$ such that $\bone_A-gf=\mu_1h+h\mu_1$.
\end{itemize}
\end{defn}

\begin{defn}
Given a  homotopy transfer diagram as above with $(A,\mu_1,\mu_2,\mu_3,\ldots)$  an $A_\infty$ algebra, the associated {\it $p$-kernels}
$$
p_n:A^{\otimes n}\ra A \quad{(n\ge 2)}
$$
are the linear maps of degree $2-n$ defined inductively as
$$
p_n:=\sum_{B(n)} (-1)^{\theta(r_1,\ldots,r_k)} \mu_k((h\circ p_{r_1}) \otimes \ldots \otimes (h\circ p_{r_k})),
$$
where 
$$h\circ p_1 := \bone_A,$$
$$
\theta(r_1,\ldots,r_k):=\sum_{1\le i< j\le k}r_i(r_j+1), \text{ and }
$$
$$
B(n):=\{(k,r_1,\ldots,r_k)\mid k\ge 2, r_1, \ldots, r_k\ge 1, r_1+\ldots +r_k=n\}.
$$
The associated {\it $q$-kernels} 
$$
q_n:A^{\otimes n}\ra A \quad (n\ge 1)$$
are the linear maps of degree $1-n$ defined inductively by $q_1:=\bone_A$, and for $n\ge 2$ by
$$
q_n:=\sum_{C(n)}(-1)^{n+r_i+\theta(r_1,\ldots, r_i)}\mu_k((\psi\phi)_{r_1}\otimes\ldots\otimes(\psi\phi)_{r_{i-1}}\otimes(h\circ q_{r_i})\otimes\bone_A^{k-i})
$$
where
$$
C(n):=\{(k,i,r_1,\ldots,r_i)\mid k,i,r_1,\ldots,r_i\in\bN, 2\le k\le n, 1\le i\le k,$$
$$ r_1,\ldots, r_i\ge 1, r_1+\ldots r_i+k-i=n\},
$$
$$
(\psi\phi)_m:=gf\circ q_m+\sum_{B(m)}(-1)^{\theta(r_1,\ldots,r_k)} (h\circ p_k)((gf\circ q_{r_1})\otimes\ldots\otimes(gf\circ q_{r_k})).
$$
\end{defn}

One has the following explicit form of the $A_\infty$ homotopy transfer theorem, after \cite{Mark} and \cite{Kopriva}*{\S 3}:

\begin{theorem}\label{thrmEHTT} Let $$
\xymatrix{
A \ar@(lu,ld)_h \ar@<.5ex>[r]^f & \ar@<.5ex>[l]^g B
}
$$
be a homotopy transfer diagram with $(A,\mu_1,\mu_2,\mu_3,\ldots)$  an $A_\infty$ algebra, and consider the associated $p$-kernels and $q$-kernels. Define
$$
\nu_n:=f\circ p_n\circ g^{\otimes n}, \quad \phi_n:=f\circ q_n,\quad \psi_n:=h\circ p_n\circ g^{\otimes n}, \quad H_n:=h\circ q_n.
$$
Then 
\begin{itemize}
\item $(B,\nu_1,\nu_2,\nu_3,\ldots)$ is an $A_\infty$ algebra structure on $(B,\nu_1)$,
\item $\psi=(g,\psi_2,\psi_3,\ldots)$ is an $A_\infty$ morphism from $(B,\mu)$ to $(A,\nu)$,
\item $\phi=(f,\phi_2,\phi_3,\ldots)$ is an $A_\infty$ morphism from $(A,\nu)$ to $(B,\nu)$,
\item the composition $\psi\phi$ is $A_\infty$ homotopy equivalent to $\bone_A$ via $H=(h,H_2,H_3,\ldots)$.
\end{itemize}
If in addition, $fg$ is also homotopy equivalent to $\bone_B$ (so, $f$ and $g$ are homotopy equivalences of each other), then $\psi$ and $\phi$ are weak equivalences.
\end{theorem}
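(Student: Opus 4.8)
The plan is to verify each of the four bullet points by a direct but organized computation, exploiting the bar-construction/tensor-coalgebra reformulation of $A_\infty$ structures. First I would recall that an $A_\infty$ algebra structure on $A$ is the same as a square-zero coderivation $D_\mu$ of degree $1$ on the tensor coalgebra $T^c(A[1])$, an $A_\infty$ morphism is a coalgebra map commuting with the coderivations, and an $A_\infty$ homotopy is a $(D_\mu,D_\nu)$-coderivation homotopy; the signs $\theta$, $\varepsilon$ appearing in the $p$- and $q$-kernels are exactly the Koszul signs produced by this dictionary under the shift $A \mapsto A[1]$. With this in hand, the $p$-kernels assemble into a single map $P\colon T^c(A[1])\to A[1]$, and the defining recursion says precisely that the induced coalgebra endomorphism built from $P$ is the ``perturbed'' projector $g f$ twisted by $h$; similarly the $q$-kernels assemble into a homotopy $Q$. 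So the bulk of the proof is bookkeeping: unwind each inductive definition into its coalgebra avatar.

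The key steps, in order, are: (1) Establish the coalgebra dictionary and fix sign conventions, checking that $h\circ p_1=\bone_A$, $q_1=\bone_A$, and the homotopy relation $\bone_A-gf=\mu_1 h+h\mu_1$ match the degree-$0$ and first-order parts of the statements to be proved. (2) Prove $D_\nu^2=0$, i.e. that $(B,\nu_1,\nu_2,\ldots)$ is $A_\infty$: here one substitutes $\nu_n=f\circ p_n\circ g^{\otimes n}$ into the generalized associativity relation and uses $\mu_1 h+h\mu_1=\bone_A-gf$ together with the associativity relations for $\mu$ to collapse the sum telescopically; this is the classical homotopy transfer argument of \cite{Mark}, \cite{Kopriva}, adapted verbatim. (3) Verify that $\phi=(f,\phi_2,\ldots)$ with $\phi_n=f\circ q_n$ is an $A_\infty$ morphism $(A,\nu)\to(B,\nu)$ and $\psi=(g,\psi_2,\ldots)$ with $\psi_n=h\circ p_n\circ g^{\otimes n}$ is an $A_\infty$ morphism $(B,\mu)\to(A,\nu)$ — wait, one must be careful that source/target $A_\infty$ structures are as stated; the morphism relations again reduce, after substitution, to the $p$/$q$ recursions and the homotopy identity. (4) Check that $H=(h,H_2,\ldots)$, $H_n=h\circ q_n$, exhibits $\psi\phi\simeq\bone_A$; the $(\psi\phi)_m$ appearing in the definition of $q_n$ is exactly the component expression of the composite $\psi\phi$, so the $q$-recursion is the homotopy equation rearranged. (5) Finally, if $fg\simeq\bone_B$ as well, then $\phi_1=f$ and $\psi_1=g$ are quasi-isomorphisms, so by Definition \ref{defWE} (applied via Proposition \ref{Ainf to Linf}, or directly for $A_\infty$) both $\phi$ and $\psi$ are weak equivalences.

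The main obstacle will be step (3)–(4): keeping the signs straight. The $p$-kernel sign $\theta(r_1,\ldots,r_k)=\sum_{i<j}r_i(r_j+1)$ and the extra shifts $(-1)^{n+r_i+\theta(\cdots)}$ in the $q$-kernels, together with the $\varepsilon$ from Definition \ref{inf-morphism}, all have to reproduce exactly the Koszul signs generated by moving the homotopy $h$ (a degree $-1$ map) and the projector-defect $\bone_A-gf$ past tensor factors in $T^c(A[1])$. I expect to prove the sign compatibility once and for all at the level of the coalgebra maps $P$, $Q$ rather than re-deriving it term by term: one shows that the recursions of \cite{Mark}/\cite{Kopriva} are precisely the component equations of the coalgebra identities $\Pi = g f + (\text{perturbation by }h)$, $D_\nu \Phi = \Phi D_\nu$, $D_\nu \Psi - \Psi D_\mu=0$, and $\Psi\Phi-\bone = [D_\mu,H]$, each of which is a one-line statement. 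This reduces the whole theorem to citing the homological perturbation lemma in its coalgebra form, with the bullet points being its four standard outputs; the only genuinely new content over \cite{Mark} and \cite{Kopriva} is the explicit closed-form indexing of $B(n)$ and $C(n)$, which we have simply transcribed.
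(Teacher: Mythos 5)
The paper does not actually prove this theorem: it is imported verbatim ``after \cite{Mark} and \cite{Kopriva}*{\S 3}'', with the explicit $p$- and $q$-kernel recursions transcribed from those sources, and the surrounding text only remarks that homotopies of $A_\infty$ morphisms will not be recalled and that the diagrams used later always have $f,g$ homotopy equivalences. So there is no in-paper argument to match yours against; your proposal supplies a proof route the paper deliberately outsources. Your outline (bar construction, coderivation $D_\mu$ on $T^c(A[1])$, the four bullets as the four outputs of the coalgebra perturbation lemma) is the standard alternative to Markl's direct tree-by-tree induction, and your step (5) and the observation that the stated source/target structures for $\psi$ and $\phi$ are garbled in the statement (they should read $\psi:(B,\nu)\to(A,\mu)$ and $\phi:(A,\mu)\to(B,\nu)$, as the degrees of $\psi_n=h\circ p_n\circ g^{\otimes n}$ and $\phi_n=f\circ q_n$ force) are both correct.

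The one substantive gap in your reduction is the final claim that everything follows by ``citing the homological perturbation lemma in its coalgebra form.'' That lemma, in the form that outputs a coderivation and coalgebra morphisms on $T^c(A[1])$, is normally stated for a \emph{special} deformation retract, i.e.\ with the side conditions $fh=0$, $hg=0$, $h^2=0$. Definition \ref{defnTD} imposes none of these: a homotopy transfer diagram here is only required to satisfy $\bone_A-gf=\mu_1h+h\mu_1$, and $fg$ need not even be homotopic to $\bone_B$ for the first four bullets. One can always modify $h$ to enforce the side conditions, but then the perturbation-lemma output no longer literally equals the $p$- and $q$-kernel formulas of the theorem, which are built from the \emph{given} $h$. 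This is precisely why \cite{Mark} verifies the transferred structure by a direct induction over the recursions rather than by quoting the perturbation lemma. So either you must carry out the inductive sign verification you defer in steps (2)--(4) (in which case the coalgebra dictionary is scaffolding, not the proof), or you must prove a side-condition-free version of the coalgebra perturbation lemma and then separately check that its components agree with the stated $p_n$, $q_n$. As written, the proposal asserts rather than establishes that agreement.
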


We will not recall here what a homotopy is between two $A_\infty$ morphisms; see {\it loc. cit.} for definition. We will only use homotopy transfer diagrams where $f$ and $g$ are homotopy equivalences, so that the last conclusion holds. 


\begin{corollary}
Let $$
\xymatrix{
A \ar@(lu,ld)_h \ar@<.5ex>[r]^f & \ar@<.5ex>[l]^g B
}
$$
be a homotopy transfer diagram, where $(A,\mu_1,\mu_2,\mu_3,\ldots)$ is an $A_\infty$ algebra and $(B,\nu_1)$ is a complex. Suppose in addition that all the  components $A^i$ and $B^i$ admit an extra grading
$$
A^i=\bigoplus_pA^i_p\quad\text{and}\quad B^i=\bigoplus_pB^i_p
$$
such that all $\mu_n$, $\nu_1$, $f$, $g$, and $h$ are compatible with the extra grading, that is, the restrictions of these maps to a multigraded component have image in the ``right'' bigraded component as described by:
$$
\mu_n: A^{i_1}_{p_1}\otimes \ldots\otimes A^{i_n}_{p_n}\ra A^{i_1+\ldots+i_n+2-n}_{p_1+\ldots+p_n}
\quad \text{and} \quad
\nu_1:B^i_p\ra B^{i+1}_p,
$$
$$
f: A^i_p\ra B^i_p,\quad g:B^i_p\ra A^i_p,\quad h:A^i_p\ra A^{i-1}_p.
$$
Then all the maps $\nu_n$, $\phi_n$, $\psi_n$, $H_n$ from Theorem \ref{thrmEHTT} are also compatible with the extra grading, that is, their restrictions to a multigraded component have image in the ``right'' multigraded component, e.g.
\be\label{eqMulti}
\nu_n: B^{i_1}_{p_1}\otimes \ldots\otimes B^{i_n}_{p_n}\ra B^{i_1+\ldots+i_n+2-n}_{p_1+\ldots+p_n}.
\ee
\end{corollary}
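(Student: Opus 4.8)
The plan is to prove the corollary by induction on $n$, following the inductive definitions of the $p$-kernels and $q$-kernels, and tracking the extra grading through each building block. First I would set up the bookkeeping: I will say that a multilinear map $\Phi_n:A^{\otimes n}\to A$ is \emph{multigraded of cohomological degree $d$} if its restriction sends $A^{i_1}_{p_1}\otimes\cdots\otimes A^{i_n}_{p_n}$ into $A^{i_1+\cdots+i_n+d}_{p_1+\cdots+p_n}$, and similarly with target $B$; the point is that the total weight index $p_1+\cdots+p_n$ is simply additive and is \emph{not} shifted, unlike the cohomological degree. The hypotheses say exactly that $\mu_n$ is multigraded of cohomological degree $2-n$, that $\nu_1,f,g$ are multigraded of degree $0$ (resp.\ $+1$ for $\nu_1$), and that $h$ is multigraded of degree $-1$. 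Since composition and tensor product of multigraded maps are again multigraded, with the cohomological degrees and the weight shifts (which are zero) adding up in the obvious way, the whole argument reduces to checking that each defining formula only ever composes and tensors maps that are already known to be multigraded.

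The induction then runs as follows. For the $p$-kernels: $h\circ p_1=\bone_A$ is multigraded of degree $0$; assuming $p_{r}$ is multigraded of cohomological degree $2-r$ for all $r<n$, the map $h\circ p_r$ is multigraded of degree $1-r$, so the tensor product $(h\circ p_{r_1})\otimes\cdots\otimes(h\circ p_{r_k})$ is multigraded of degree $\sum(1-r_i)=k-n$, and post-composing with $\mu_k$, which is multigraded of degree $2-k$, yields a multigraded map of degree $2-n$; the signs $(-1)^{\theta(r_1,\ldots,r_k)}$ are scalars and irrelevant, and summing multigraded maps of the same degree keeps them multigraded. Hence $p_n$ is multigraded of degree $2-n$, and therefore $\nu_n=f\circ p_n\circ g^{\otimes n}$ is multigraded of degree $2-n$ with target $B$, giving precisely \eqref{eqMulti}, and $\psi_n=h\circ p_n\circ g^{\otimes n}$ is multigraded of degree $1-n$. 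For the $q$-kernels one argues in the same way but must do the two inductive quantities $q_m$ and $(\psi\phi)_m$ simultaneously: $(\psi\phi)_m$ is built from $gf\circ q_m$ and from $(h\circ p_k)$ composed with tensor products of $gf\circ q_{r_i}$, so if by induction $q_r$ is multigraded of degree $1-r$ for $r<m$ (and $q_m$'s own defining sum only uses $q_{r_i}$ with $r_i<m$ since $k\ge 2$ forces $k-i\ge 0$ with at least one unit of ``slack''… here one checks $r_i\le n-1$), then $(\psi\phi)_m$ comes out multigraded of degree $1-m$, and feeding these into the formula for $q_n$ together with one factor $h\circ q_{r_i}$ (degree $-r_i$) and $k-i$ copies of $\bone_A$ and then $\mu_k$ (degree $2-k$) gives total degree $1-n$. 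Consequently $\phi_n=f\circ q_n$ and $H_n=h\circ q_n$ are multigraded of degrees $1-n$ and $-n$ respectively.

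The only genuinely delicate point — the ``main obstacle'' — is purely combinatorial: one must verify that in the recursions for $p_n$ and especially for $q_n$ (and for the auxiliary $(\psi\phi)_m$), every index $r_i$ or $m$ that appears strictly precedes $n$, so that the inductive hypothesis actually applies; this is where the conditions $k\ge 2$ in $B(n)$ and $2\le k\le n$, $1\le i\le k$, $r_1+\cdots+r_i+k-i=n$ in $C(n)$ are used, and it requires a short but careful case analysis (e.g.\ ruling out the degenerate possibility $r_i=n$, which would correspond to $k=i$ and all other summands zero, contradicting $k\ge 2$ unless handled separately). Everything else is the mechanical observation that the class of multigraded maps is closed under the three operations — scalar multiplication, summation within a fixed degree, and composition/tensoring with additive behaviour of the degree shift and \emph{trivial} behaviour of the weight shift — so once the recursion is seen to be well-founded, the conclusion for $\nu_n,\phi_n,\psi_n,H_n$ is immediate from Theorem~\ref{thrmEHTT}'s formulas $\nu_n=f\circ p_n\circ g^{\otimes n}$, $\phi_n=f\circ q_n$, $\psi_n=h\circ p_n\circ g^{\otimes n}$, $H_n=h\circ q_n$.
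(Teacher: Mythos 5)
Your proposal is correct and follows exactly the paper's own (much terser) argument: the paper simply observes that compositions and tensor products of graded maps are graded, so compatibility propagates through the inductive definitions of the $p$-kernels and $q$-kernels, and you have fleshed out that induction with explicit degree bookkeeping and the well-foundedness check on $B(n)$ and $C(n)$. No gaps.
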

\begin{proof}
Since the compositions and tensor products of graded maps are graded, it suffices  to prove that the $p$-kernels $p_n$ and the $q$-kernels $q_n$ are compatible with the extra grading. The same observation applies now to the inductive definition of $p_n$ and $q_n$.
\end{proof}

The following is an extension of Kadeishvili's minimality theorem \cite{Kad80} to extra gradings:

\begin{corollary}\label{corKgr}
Let $A$ be a dga with an extra grading on each $A^i$, compatible with the differential and the multiplication. Let $H$ be the cohomology $A$, with the induced extra grading on each $H^i$. Then there exists an $A_\infty$ algebra structure $(H,0,\nu_2, \nu_3,\ldots)$  and  an $A_\infty$ quasi-isomorphism $\psi:H\xra{\sim} A$, such that all $\nu_n$ and  $\psi$ are compatible with the multigrading induced by the extra grading, as in (\ref{eqMulti}).
\end{corollary}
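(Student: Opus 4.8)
The plan is to deduce Corollary~\ref{corKgr} from the graded version of the homotopy transfer theorem by choosing a homotopy transfer diagram realizing the cohomology $H$ as a deformation retract of $A$, in a way that respects the extra grading. First I would split each complex $(A^\bullet_p, d)$ \emph{one extra-grading component at a time}: since $\bK$ has characteristic zero (in fact any field suffices here, as we only need vector-space splittings), for each fixed $p$ the complex $(A^\bullet_p,d)$ admits a Hodge-type decomposition $A^i_p = B^i_p \oplus H^i_p \oplus L^i_p$, where $B^i_p = d(A^{i-1}_p)$ are the coboundaries, $H^i_p$ is a chosen complement of $B^i_p$ inside the cocycles $Z^i_p$ (so $H^i_p \cong H^i(A)_p$), and $L^i_p$ is a chosen complement of $Z^i_p$ in $A^i_p$. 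Assembling over all $p$ gives a decomposition of $A^i$ compatible with the extra grading by construction.

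Next I would build the maps $f,g,h$ from this decomposition in the standard way, carried out inside each fixed-$p$ piece so that gradedness is automatic. Let $g\colon H\hookrightarrow A$ be the inclusion of the chosen harmonic representatives; it lands in $H^i_p$ and hence is graded. Let $f\colon A\to H$ be the projection killing $B^\bullet$ and $L^\bullet$; since $d$ restricts to an isomorphism $L^{i-1}_p \xrightarrow{\ \sim\ } B^i_p$, the contracting homotopy $h$ is defined to be zero on $H^\bullet$ and on $L^\bullet$, and on $B^i_p$ to be the inverse of that isomorphism followed by the inclusion $L^{i-1}_p\hookrightarrow A^{i-1}_p$; this gives $h\colon A^i_p\to A^{i-1}_p$, so $h$ is graded, and one checks $\bone_A - gf = dh + hd$ as usual. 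Thus
$$
\xymatrix{
A \ar@(lu,ld)_h \ar@<.5ex>[r]^f & \ar@<.5ex>[l]^g H
}
$$
is a homotopy transfer diagram in which $f$ and $g$ are homotopy equivalences (as $fg = \bone_H$ and $gf \simeq \bone_A$ via $h$) and in which $f,g,h$ are all compatible with the extra grading; moreover $(A,\mu_1=d,\mu_2=\text{mult},\mu_{\ge 3}=0)$ is an $A_\infty$ algebra whose operations $\mu_n$ are compatible with the extra grading by the hypothesis on $A$. Since $f$ induces an isomorphism on cohomology and the transferred $\nu_1 = f\circ p_1\circ g = fg$-type differential vanishes (more precisely $\nu_1 = f d g = 0$ because $g$ lands in cocycles and $f$ kills coboundaries), the transferred structure has zero differential.

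Now I would simply invoke the preceding Corollary with $B = H$: it produces an $A_\infty$ structure $(H,\nu_1,\nu_2,\dots)$ and $A_\infty$ morphisms $\psi = (g,\psi_2,\dots)$ and $\phi = (f,\phi_2,\dots)$, all of whose structure maps are compatible with the extra grading, and by Theorem~\ref{thrmEHTT} the morphism $\psi\colon H\to A$ is a weak equivalence because $f$ and $g$ are homotopy equivalences of each other. Combined with $\nu_1=0$ this is exactly the assertion: $(H,0,\nu_2,\nu_3,\dots)$ is an $A_\infty$ algebra multigraded-compatibly, quasi-isomorphic to $A$ via $\psi$. The only genuinely delicate point is the very first step — ensuring the Hodge-type splitting of $A$ can be chosen \emph{simultaneously} compatible with both the cohomological grading and the extra grading. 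This is where the direct-sum hypothesis $A^i = \bigoplus_p A^i_p$ with $d$ and multiplication preserving it is used in an essential way: it lets one perform the splitting independently within each $A^\bullet_p$, so no compatibility needs to be arranged across different values of $p$, and everything downstream (the $p$-kernels, $q$-kernels, and hence $\nu_n,\psi_n$) inherits gradedness formally from the previous Corollary.
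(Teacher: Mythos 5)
Your proposal is correct and follows essentially the same route as the paper: both construct the splitting $A^n_p = H^n_p \oplus B^n_p \oplus (\text{complement of cocycles})$ separately within each extra-grading component, obtain a homotopy transfer diagram $(f,g,h)$ compatible with the extra grading, and invoke the graded homotopy transfer corollary. Your explicit description of $h$ via the isomorphism $d\colon L^{i-1}_p \xrightarrow{\sim} B^i_p$ just spells out what the paper leaves implicit.
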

\begin{proof} The proof is a just a graded version of the proof of Kadeishvili's theorem.
Let $$Z_p=\ker d_p,\quad B_p=d_p(A_p),\quad {K_p=A_p/Z_p},$$
where $d_p^n:A_p^n\ra A^{n+1}_p$ are the components with respect to the extra grading of the differential $d$ of $A$. Let $Z_p^n=H_p^n\oplus B_p^n$ be a splitting of $$0\ra B_p^n\ra Z_p^n\ra H_p^n\ra 0$$ and $A_p^n=Z_p^n\oplus K_p^n$ a splitting of $$0\ra Z_p^n\ra A_p^n\ra K_p^n\ra 0.$$ The  splittings $A_p^n=H_p^n\oplus B_p^n\oplus K_p^n$ define a projection $f_p:A_p\ra H_p$ and an injection $g_p:H_p\ra A_p$ which are morphisms of complexes, together with a homotopy $h_p:A_p\ra A_p[-1]$ such that $\bone_A-gf=dh+hd$ with $f=\bigoplus_pf_p$, $g=\bigoplus_pg_p$, $h=\bigoplus_ph_p$. The triple $(f,g,h)$ is thus a homotopy transfer diagram between the complexes $(A,d)$ and $(H,0)$, such that the maps $f$, $g$, $h$, $d$, and the multiplication on $A$, are compatible with the extra grading. So the previous corollary applies.
\end{proof}

\subsection{Transfer theorem for $\Linf$ pairs}\label{subTT} Similarly to Theorem \ref{thrmEHTT}, one has the following,  after \cite{LodVall12}*{Theorem 10.3.5}. We will not need the precise signs.

\begin{theorem}\label{minimal model of Linf}
Let $\left\{l_n:A^{\otimes n}\rightarrow A\right\}_{n\geq 1}$ be an $\Linf$ algebra structure on a graded vector space $A$. Given a {homotopy transfer diagram}
\[
\xymatrix{
A \ar@(lu,ld)_h \ar@<.5ex>[r]^f & \ar@<.5ex>[l]^g B,
}
\]
between complexes $(A,l_1)$ and $(B,l_1')$,
suppose in addition that $g$ is a quasi-isomorphism. Then there exists an  $\Linf$ algebra structure $\left\{l'_n:B^{\otimes n}\rightarrow B\right\}_{n\geq 1}$ on $(B,l'_1)$,
$l'_n=f\circ p_n\circ g^{\otimes n}$,
with $p_n$ defined inductively  from $l$ and $h$,
and there is  a weak equivalence of $\Linf$ algebras  $(A,l)\ra(B,l')$.
\end{theorem}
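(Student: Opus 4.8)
The plan is to obtain this as the specialization of the homotopy transfer theorem for homotopy algebras over a Koszul operad, \cite{LodVall12}*{Theorem 10.3.5}, to the operad governing Lie algebras; I outline the mechanism for the reader's convenience. Recall the dictionary: an $\Linf$ structure $\{l_n\}_{n\ge 1}$ on $A$ is the same datum as a degree $+1$ square-zero coderivation $D=D_1+D_2+\cdots$ of the cofree conilpotent cocommutative coalgebra $\mathcal C(A)=S^c(sA)$ on the suspension $sA$, with $D_1$ induced by $l_1$ and $D_n$ the corestriction-component determined by $l_n$; likewise a weak equivalence of $\Linf$ algebras is the same as a morphism of dg cocommutative coalgebras whose linear part is a quasi-isomorphism.

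The first step is to pass the transfer data to the cofree coalgebras. After the standard modification one may assume the side conditions $h^2=0$, $fh=0$, $hg=0$ on the homotopy (convenient for obtaining the clean tree formulas, and harmless). The chain maps $f,g$ then induce strict morphisms of cofree cocommutative coalgebras $F\colon\mathcal C(A)\to\mathcal C(B)$ and $G\colon\mathcal C(B)\to\mathcal C(A)$ intertwining the linear codifferentials $D_1$, $D_1'$, while $h$ induces, by extending to tensor powers, a coderivation homotopy $\mathcal H$ of $\mathcal C(A)$ with $\bone-GF=D_1\mathcal H+\mathcal H D_1$; since $g$ is a quasi-isomorphism this is a homotopy retract in the sense of \cite{LodVall12}*{\S 10.3}.

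The second step is to perturb. Apply the homological perturbation lemma to $\delta:=D-D_1$, which is locally nilpotent by conilpotency of $\mathcal C(A)$: it produces a perturbed differential $D':=D_1'+F\bigl(\sum_{k\ge 0}(\delta\mathcal H)^k\delta\bigr)G$ on $\mathcal C(B)$, together with perturbed comparison maps $F_\infty,G_\infty$ and a perturbed homotopy $\mathcal H_\infty$. Since $\delta$ is a coderivation, $F,G$ are coalgebra morphisms, and $\mathcal H$ a coderivation homotopy, the perturbation respects the coalgebra structure: $D'$ is a square-zero degree $+1$ coderivation, hence its corestriction components define an $\Linf$ structure $l'=\{l'_n\}$ on $(B,l_1')$, and expanding the geometric series as a sum over rooted trees identifies $l'_n=f\circ p_n\circ g^{\otimes n}$ with $p_n$ the inductive tree sums built from $l$ and $h$. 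Finally $F_\infty\colon(\mathcal C(A),D)\to(\mathcal C(B),D')$ is a morphism of dg cocommutative coalgebras with linear part $f$; since $g$ is a quasi-isomorphism and $gf\simeq\bone_A$, $f$ is one too, so $F_\infty$ is the asserted weak equivalence of $\Linf$ algebras $(A,l)\to(B,l')$.

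The conceptual content is all in \cite{LodVall12}*{Theorem 10.3.5}, so the essential point to check is that the output of the perturbation lemma genuinely stays within the categories involved — that $D'$ is again a coderivation and $F_\infty,G_\infty$ coalgebra morphisms — which is exactly where the coderivation property of $\delta$ and $\mathcal H$ and the side conditions on $h$ are used. The remainder is bookkeeping: identifying the corestriction components of $D'$ with the tree sums $p_n$ by induction on $n$, and tracking Koszul signs; the latter is unnecessary here, as the statement does not require the precise signs.
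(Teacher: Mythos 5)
Your proposal is correct and follows essentially the same route as the paper, which simply quotes \cite{LodVall12}*{Theorem 10.3.5} (together with the rooted-tree description of the $p_n$) without further argument. Your elaboration of the mechanism --- passing to coderivations on the cofree conilpotent cocommutative coalgebra, imposing the side conditions on $h$, and applying the homological perturbation lemma while checking that the perturbed data stay coderivations and coalgebra morphisms --- is an accurate account of what that cited theorem does, and your deduction that $f$ is a quasi-isomorphism from $g$ being one and $gf\simeq\bone_A$ is exactly what is needed for the final weak-equivalence claim.
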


\begin{rmk} Alternatively, one can describe $p_n$ as a sum over rooted trees $\phi$ of $n$ leaves
\[p_n = \sum_{\phi }\pm  \phi(l,h).\]
The notation $\phi(l, h)$ stands for the multilinear operation on $A^{\otimes n}$ defined by the tree $\phi$ together with $l$ and $h$; for the definition we refer to \cite{LodVall12} for example.
\end{rmk}

\begin{cor}\label{minimal model of dgla}
Let $(C,d,\left[\_\;,\_\right])$ be a dgla and $(H,0)$ its cohomology. Given a  homotopy transfer diagram
\[
\xymatrix{
C \ar@(lu,ld)_h \ar@<.5ex>[r]^f & \ar@<.5ex>[l]^g H
}
\]
with $g$ a quasi-isomorphism,
there is an  $\Linf$ algebra structure  
$\left\{\mu_n:H^{\otimes n}\rightarrow H\right\}_{n\geq 2}$ on $(H,0)$,  $\mu_n=f\circ p_n\circ g^{\otimes n}$, with $p_n$ defined inductively from $d$ and $[\_,\_]$, and  there is a weak equivalence of $\Linf$ algebras
\[(H,0,\mu_2,\mu_3, \mu_4, \dots) \xlongrightarrow{\sim} (C,d,\left[\_\;,\_\right]).\]
\end{cor}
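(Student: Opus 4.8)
The plan is to deduce Corollary \ref{minimal model of dgla} as a direct specialization of Theorem \ref{minimal model of Linf}. First I would observe that a dgla $(C,d,[\_\,,\_])$ is in particular an $\Linf$ algebra with $l_1=d$, $l_2=[\_\,,\_]$, and $l_n=0$ for $n\ge 3$; the generalized Jacobi identity for this $\Linf$ structure reduces exactly to $d$ being a derivation, $d^2=0$, and the graded Jacobi identity for the bracket, all of which hold by assumption. The cohomology $H=HC$ with the zero differential is then the complex $(B,l_1')=(H,0)$, and the given data $(f,g,h)$ with $g$ a quasi-isomorphism (equivalently $f$ a quasi-isomorphism, since $H$ has zero differential) is precisely a homotopy transfer diagram in the sense of Definition \ref{defnTD}, now between the complexes underlying two $\Linf$ algebras.

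Next I would invoke Theorem \ref{minimal model of Linf} verbatim: it produces an $\Linf$ algebra structure $\{l_n':H^{\otimes n}\to H\}_{n\ge 1}$ on $(H,0)$ with $l_n'=f\circ p_n\circ g^{\otimes n}$, where the $p_n$ are built inductively from the structure maps $l$ of $C$ (here just $d$ and $[\_\,,\_]$) and the homotopy $h$, together with a weak equivalence of $\Linf$ algebras $(C,l)\to(H,l')$. It remains only to verify that $l_1'=0$ and that one may harmlessly reindex the remaining maps as $\{\mu_n\}_{n\ge 2}$. For $l_1'$: by construction $l_1'=f\circ p_1\circ g$ with $p_1=\bone$, so $l_1'=f\circ g$; but $f$ and $g$ are morphisms of complexes between $(C,d)$ and $(H,0)$, so $f\circ g$ lands in $H$ and equals the identity up to the nullhomotopy $dh+hd$ — and since the target differential is zero, $l_1'$ as the transferred differential is forced to be $0$ (alternatively, the transferred $l_1'$ is always the differential of $B$, which is $0$ here by hypothesis). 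Setting $\mu_n:=l_n'$ for $n\ge 2$ then gives the stated $\Linf$ structure $(H,0,\mu_2,\mu_3,\dots)$, and the weak equivalence from Theorem \ref{minimal model of Linf}, read with the direction of arrows matching the statement, is the asserted $(H,0,\mu_2,\mu_3,\dots)\xlongrightarrow{\sim}(C,d,[\_\,,\_])$.

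I do not expect a serious obstacle here: the corollary is essentially a dictionary entry. The only point requiring a word of care is bookkeeping of the direction of the weak equivalence — Theorem \ref{minimal model of Linf} is stated as producing a map $(A,l)\to(B,l')$, i.e. $(C,l)\to(H,l')$, whereas the corollary writes the weak equivalence as going from $H$ to $C$; since weak equivalences of $\Linf$ algebras are quasi-isomorphisms on the underlying complexes and $g$ is already assumed to be a quasi-isomorphism $H\to C$, one obtains the arrow in the stated direction either by transporting along $g$ (via the standard fact, e.g. \cite{LodVall12}, that $\Linf$ weak equivalences admit $\Linf$ quasi-inverses) or simply by noting that the roles of $f$ and $g$ in the transfer diagram may be interchanged. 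The second minor point is confirming that $p_n$ for $n\ge 2$ indeed only involves $d$ and $[\_\,,\_]$ and not any higher $l_n$; this is immediate from the inductive formula for $p_n$ once one substitutes $l_{\ge 3}=0$, so the phrase ``with $p_n$ defined inductively from $d$ and $[\_\,,\_]$'' is justified. No new computation beyond these observations is needed.
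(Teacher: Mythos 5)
Your proposal is correct and matches the paper's (implicit) argument: the corollary is stated as an immediate specialization of Theorem \ref{minimal model of Linf} to the case $l_1=d$, $l_2=[\_\,,\_]$, $l_{\ge 3}=0$, with the transferred differential vanishing because the target complex is $(H,0)$. Your remark about the direction of the weak equivalence is a fair catch of a small inconsistency in how the paper orients the arrow, and your resolution via $\Linf$ quasi-inverses (or via the transfer theorem's companion morphism extending $g$) is the standard one.
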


\begin{rmk} Alternatively, $p_n$ can  be described as a sum over rooted trees spanned by binary trees, with $n$ leaves, see \cite{LodVall12}:
\[p_n= \sum_{\phi} \pm \phi(\left[\_\;,\_\right],h).\]
\end{rmk}

\begin{theorem}\label{thmTTP}
Let $(C,M)$ be a dgl pair. Then there exists an $\Linf$ pair structure on the cohomology pair $(HC,HM)$ with zero differentials, and the second order operations inherited from $(C,M)$, together with a weak equivalence of $\Linf$ pairs $(HC,HM)\xrightarrow{\sim}(C,M)$. 
\end{theorem}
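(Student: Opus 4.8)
The plan is to deduce Theorem~\ref{thmTTP} from the algebra version, Corollary~\ref{minimal model of dgla}, by passing through the construction $L\oplus M$ of Theorem~\ref{L oplus M structure}. The point is that a dgl pair $(C,M)$ is, via Theorem~\ref{L oplus M structure}, the same data as a dgla $C\oplus M$ (with $l_1$ the sum of the two differentials, $l_2$ built from the bracket on $C$ and the module action $C\otimes M\to M$, and $l_{\ge 3}=0$), and that the cohomology of the complex $(C\oplus M,l_1)$ is exactly $HC\oplus HM$ with its induced grading. So the strategy is: build a homotopy transfer diagram for the complex $(C\oplus M, l_1)$ onto its cohomology $(HC\oplus HM,0)$, apply Corollary~\ref{minimal model of dgla} to obtain an $\Linf$ algebra structure $\{\mu_n\}$ on $HC\oplus HM$ together with a weak equivalence $(HC\oplus HM,0,\mu_2,\mu_3,\dots)\xrightarrow{\sim}(C\oplus M,l_1,l_2,0,\dots)$, and then recognize that this transferred $\Linf$ algebra structure is of the form $L\oplus M$ for an $\Linf$ pair structure on $(HC,HM)$, using Proposition~\ref{formal pair iff formal oplus} and Proposition~\ref{module structure recovered} to read off the pair and the pair morphism.

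Here are the steps in order. First I would choose splittings of the complex $(C\oplus M,l_1)$ adapted to the direct sum decomposition: pick $H_{C\oplus M}=HC\oplus HM$ as a subspace complementary to the image of $l_1$ inside the kernel of $l_1$, and a complement to the kernel, exactly as in the proof of Corollary~\ref{corKgr} but now for the single complex $C\oplus M$. Because $l_1=d_C\oplus m_1$ respects the splitting $C\oplus M$, these can be taken to respect it too, so $f$, $g$, $h$ all decompose as $f_C\oplus f_M$ etc.; this is what guarantees that the transferred structure will again split. Second, apply Corollary~\ref{minimal model of dgla} to the dgla $C\oplus M$ and this transfer diagram: one gets an $\Linf$ algebra structure $\{\mu_n\}_{n\ge 2}$ on $HC\oplus HM$ with $\mu_n = f\circ p_n\circ g^{\otimes n}$ and $p_n$ built from $l_2$ (the combined bracket/action) and $h$, plus the weak equivalence $(HC\oplus HM,0,\mu)\xrightarrow{\sim}(C\oplus M)$. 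Third, observe that since $l_2$ on $C\oplus M$ is the $j_2$ of an $L\oplus M$ construction and $h$ respects the summands, each $p_n$ — being a sum of trees in $l_2$ and $h$ — preserves the property of being a ``pair-type'' operation: it sends $HC^{\otimes n}$ into $HC$, and sends inputs with exactly one $HM$-factor into $HM$, and is zero on inputs with two or more $HM$-factors (the module action can never produce an element of $HC$, and there is no operation consuming two $M$-inputs). Hence $\{\mu_n\}$ is exactly the $L_\infty$ algebra associated by Theorem~\ref{L oplus M structure} to a well-defined $\Linf$ module structure $\{m_n\}$ on $HM$ over the $\Linf$ algebra $HC$, which one extracts by the restriction trick of Proposition~\ref{module structure recovered}. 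Fourth, the weak equivalence $(HC\oplus HM)\xrightarrow{\sim}(C\oplus M)$ likewise preserves the summand structure, so by Proposition~\ref{formal pair iff formal oplus}(ii),(iii) it is of the form $f\oplus g$ for a weak equivalence of $\Linf$ pairs $(HC,HM)\xrightarrow{\sim}(C,M)$. Finally, the second-order operations: by construction $\mu_2 = f\circ p_2\circ g^{\otimes 2}$ with $p_2 = l_2$, which on cohomology is the induced bracket on $HC$ and the induced action $HC\otimes HM\to HM$, so the pair structure has zero differentials and exactly the inherited second-order maps.

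The main obstacle I expect is the bookkeeping that the transferred operations really do land in the ``pair'' shape — i.e. that no $\mu_n$ eats two $M$-inputs and that none produces an $HC$-component from an $M$-input. This is morally obvious because $M$ is an \emph{ideal} in $C\oplus M$ with $M\cdot M=0$ and $h(M)\subset M$, so any tree $\phi(l_2,h)$ evaluated on inputs containing $k$ copies of $M$-elements outputs something in $M$ if $k=1$, in $C$ if $k=0$, and $0$ if $k\ge 2$; but making this precise requires an induction on the tree-depth / on $n$ in the recursive definition of $p_n$, tracking that each internal vertex is either a bracket on $C$-type arguments or the action on one $M$-type argument, and that $h$ and $f,g$ never mix the two factors. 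A clean way to phrase it is: the subspace $HM\subset HC\oplus HM$, together with $HM^{\otimes 2}=0$ inside the relevant tensor powers, is preserved in the appropriate graded sense by every operation used, and then invoke that an $\Linf$ algebra structure on $L\oplus M$ with this ``square-zero ideal'' property is precisely an $\Linf$ pair structure on $(L,M)$ — which is essentially the content of Theorem~\ref{L oplus M structure} and Proposition~\ref{module structure recovered} read in reverse. Once that structural observation is in place, the rest is a direct appeal to the already-established algebra-level transfer theorem.
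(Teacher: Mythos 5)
Your proposal is correct and follows essentially the same route as the paper: form the dgla $C\oplus M$ via Theorem \ref{L oplus M structure}, apply the homotopy transfer theorem (Corollary \ref{minimal model of dgla}) to get an $\Linf$ structure on $H(C\oplus M)=HC\oplus HM$, and then recognize the result as an $\Linf$ pair via Proposition \ref{formal pair iff formal oplus}. In fact your third step --- checking that the transferred tree operations really have the ``pair shape'' (the $HC$-component depends only on $HC$-inputs, the $HM$-component is a sum of terms each consuming exactly one $HM$-input, and inputs with two or more $HM$-factors are killed) --- spells out a verification that the paper's two-line proof leaves implicit, so your write-up is if anything more complete than the original.
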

\begin{proof}
Given a dgl pair $(C,M)$, we consider the dgla $C\oplus M$ with differential $d_{C\oplus M}$ and bracket $[\_\;,\_]_{C\oplus M}$ given by the construction from Theorem \ref{L oplus M structure}. Then, by Corollary \ref{minimal model of dgla} there is $\Linf$ algebra structure $\mu^{H(C\oplus M)}$ on $H(C\oplus M)$ and a weak equivalence
\[(H(C\oplus M),\mu^{H(C\oplus M)})\xrightarrow{\sim}(C\oplus M, d_{C\oplus M}, [\_\;,\_]_{C\oplus M}).\]
Proposition \ref{formal pair iff formal oplus} (iii) gives us the desired weak equivalence of $\Linf$ pairs
$(HC, HM) \xrightarrow{\sim} (C,M).$
\end{proof}

\section{Cohomology jump functors of \texorpdfstring{$L_\infty$}~ pairs}\label{def functors section}

We recall the deformation functors of $\Linf$ algebras. We then refine them by cohomology jump subfunctors of $\Linf$ pairs, extending the case of dgl pairs from \cite{BW}. Then we prove Theorems \ref{thmMainLDef}, \ref{propTDef}, and \ref{thrmGTC}.

\subsection{Deformation functors of $\Linf$ algebras}

We  recall some standard facts using as reference \cites{Man04, Ge, LodVall12}.



\begin{definition}\label{L infty MC functor}
Let $L$ be an $L_\infty$-algebra with structure maps $l=\{l_n\}$. The associated {\it Maurer-Cartan functor} of $L$ is the covariant functor
$$
\MC_L : \texttt{Art} \rightarrow \texttt{Set}
$$
defined for all Artinian local rings $(A, \mathfrak{m}_A) \in \texttt{Art}$ by
$$
\MC_L(A) = \left\{ \omega \in L^1\otimes\mathfrak{m}_A \hspace{0.2cm} \middle|\hspace{0.2cm} \sum_{n\geq1}\frac{1}{n!} \; l_n^A(\omega^{\otimes n})=0 \right\}
$$
where 
$$
l_n^A(a_1\otimes x_1, \dots, a_n\otimes x_n) = l_n(a_1, \dots, a_n)\otimes x_1 \dots x_n,
$$
or equivalently,
$$
l_n^A=l_n\otimes id_A,
$$
is the induced $\Linf$ algebra structure on $L\otimes \mathfrak{m}_A$. Note that the sum in the formula is finite since the ideal $\mathfrak{m}_A$ is nilpotent.
\end{definition}



\begin{definition}\label{MChomotopy} Let $$\bK[t,dt]:=\bK[t]\oplus\bK[t]dt$$ be the cdga of polynomial forms on the affine line $\bA^1_\bK$, that is, with $t$ of degree 0, $dt$ of degree $1$, and the (expected) differential $d(p(t)+q(t)dt)=p'(t)dt$.  For a dga $Z$ we denote by $$Z[t,dt]$$ the dga $Z\otimes\bK[t,dt]$. For any $c\in\bK$ and $z(t,dt)\in Z[t,dt]$, there exists a unique evaluation morphism of dga's $Z[t,dt]\ra Z$, $z(t,dt)\mapsto z(c,0)$, which sends $t$ to $c$ and $dt$ to 0.

For $(A,\mathfrak{m}_A)\in \texttt{Art}$, $A[t,dt]$ is a cdga, finite dimensional over $\bK$. Same for $\mathfrak{m}_A[t,dt]$ which in addition is nilpotent.

The tensor product between an $\Linf$ algebra and a cdga is naturally an $\Linf$ algebra \cite{Ge}*{\S 4}. In the particular case of $L\otimes \mathfrak{m}_A[t,dt]$, the definition of Maurer-Cartan elements becomes

\begin{align*}
MC_L\left(A[t,dt]\right)  :=  \Bigg\{ \omega \in \big(L\otimes \mathfrak{m}_A [t,dt]\big)^1= \left(L^1\otimes\mathfrak{m}_A[t]\right)\oplus \left(L^0\otimes\mathfrak{m}_A[t]dt\right)   & \\
\Bigg| \sum_{n\geq1}\frac{1}{n!} \; l_n^{A[t,dt]}\left(\omega^{\otimes n}\right)= \left(id_L\otimes d_{A[t,dt]}\right)\left(\omega\right) &\Bigg\}.
\end{align*}

The evaluation of any element in this set at $(t,dt)=(c,0)$
 is an element in $\MC_L(A)$ for  any $c\in\bK$. Two elements $\omega_1, \omega_2 \in \MC_{L}(A)$ are said to be {\it homotopy equivalent}, if there exists an element $z(t, dt) \in \MC_{L}(A[t,dt])$ such that $z(0,0)=\omega_1$ and $z(1,0)=\omega_2$.
\end{definition}

Homotopy equivalence is an equivalence relation \cite{Ge}, \cite{Man04}*{Ch. IX}. For a dgla $L$, homotopy equivalence is the same as gauge equivalence; this is an older result of Schlessinger-Stasheff \cite{ScS}, rediscovered by many others, e.g. \cite{FM}*{\S 7}, \cite{Ge}.

\begin{definition} Let $L$ be an $\Linf$ algebra. The {\it deformation functor} of $L$ is the
covariant functor
\begin{equation*}
\Def_L: \texttt{Art} \rightarrow \texttt{Set}
\end{equation*}
defined for all $(A, \mathfrak{m}_A)\in\texttt{Art}$ by
\begin{equation*}
\Def_L: A \mapsto  {\MC_L(A)}_{\big/\sim}
\end{equation*}
where $\sim$ denotes the homotopy equivalence relation between Maurer-Cartan elements.
\end{definition}


\begin{theorem}\label{thmWLA}\cite{Man04}*{Corollary IX.22} A weak equivalence of $\Linf$ algebras $L\ra L'$ induces an isomorphism of deformation functors $\Def(L)\ra\Def(L')$. \end{theorem}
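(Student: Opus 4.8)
The plan is to write down the induced natural transformation explicitly and then prove it is an isomorphism by the standard obstruction-theoretic d\'evissage for deformation functors. First I would construct the map on Maurer--Cartan sets: for an $\Linf$ morphism $f=\{f_k\}_{k\ge1}\colon L\to L'$ and $(A,\mathfrak m_A)\in\tart$, set
\[
f_*\colon \MC_L(A)\longrightarrow \MC_{L'}(A),\qquad \omega\longmapsto \sum_{k\ge1}\frac{1}{k!}\,f_k(\omega^{\otimes k}),
\]
with $f_k$ extended $\mathfrak m_A$-linearly, exactly as $l_n^A=l_n\otimes\id_A$ in Definition \ref{L infty MC functor}; the sum is finite since $\mathfrak m_A$ is nilpotent. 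Substituting $\omega$ into the defining identity of an $\Linf$ morphism (Definition \ref{inf-morphism}), tensored with $\mathfrak m_A$ and combined with the factorial weights, should yield precisely that $f_*(\omega)$ solves the Maurer--Cartan equation of $L'$, and naturality in $A$ is immediate. Running the same computation over $\mathfrak m_A[t,dt]$ in place of $\mathfrak m_A$, and using that $f_*$ commutes with the evaluations $t\mapsto 0,1$, then shows that $f_*$ carries homotopy-equivalent Maurer--Cartan elements to homotopy-equivalent ones, so $f_*$ descends to a natural transformation $\Def(f)\colon \Def(L)\to\Def(L')$, functorial in $f$.

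To see this transformation is an isomorphism, I would pass to tangent and obstruction spaces. Both $\Def(L)$ and $\Def(L')$ are deformation functors in the Schlessinger--Manetti sense, with tangent spaces $H^1(L)$, $H^1(L')$ and complete obstruction theories valued in $H^2(L)$, $H^2(L')$, and the construction of $f_*$ is compatible with both, inducing $H^1(f_1)$ on tangent spaces and intertwining the obstruction maps via $H^2(f_1)$. Since $f$ is a weak equivalence, $f_1$ is a quasi-isomorphism, so $H^1(f_1)$ is bijective and $H^2(f_1)$ is injective; the standard isomorphism criterion then applies --- a morphism of deformation functors that is bijective on tangent spaces and injective on obstruction spaces is an isomorphism. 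Concretely this is an induction on the length of $A$ via a small extension $0\to I\to B\to A\to0$ in $\tart$: compatibility with the obstruction maps plus injectivity of $H^2(f_1)$ shows a class in $\Def(L)(A)$ lifts to $B$ exactly when its image does, while the nonempty fibers of $\Def(L)(B)\to\Def(L)(A)$ and $\Def(L')(B)\to\Def(L')(A)$ are affine spaces over $H^1(L)\otimes_\bK I$ and $H^1(L')\otimes_\bK I$ identified by the isomorphism $H^1(f_1)$; hence surjectivity and injectivity of $\Def(f)$ over $B$ follow from those over $A$, the case $A=\bK$ being trivial.

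The step I expect to be the main obstacle is this last one: upgrading a chain-level quasi-isomorphism to a bijection of set-valued functors requires the obstruction calculus for $\Def$ to be genuinely functorial in the morphism $f$, not merely for each fixed algebra. An alternative route that sidesteps the obstruction bookkeeping is to use the homotopy transfer theorem (Theorem \ref{minimal model of Linf}) to reduce to the case where $L$ and $L'$ are minimal (zero differential): there a weak equivalence has $f_1$ a linear isomorphism, hence is an isomorphism of $\Linf$ algebras with an explicit inverse, so $\Def(f)$ is an isomorphism directly from the first paragraph together with the minimal-model weak equivalences $HL\xrightarrow{\sim}L$ and $HL'\xrightarrow{\sim}L'$. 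The cost of that route is that one must first know weak equivalences of $\Linf$ algebras are invertible up to homotopy, so as to compare an arbitrary $f$ with these transfer equivalences.
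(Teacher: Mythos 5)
The paper does not actually prove this statement: it is quoted from Manetti's notes (\cite{Man04}, Corollary IX.22) with no in-text argument, so the comparison is really with the standard proofs in the literature. Your first paragraph is correct and is exactly how the induced map is defined: the pushforward $\omega\mapsto\sum_k\frac{1}{k!}f_k(\omega^{\otimes k})$ lands in $(L')^1\otimes\mathfrak m_A$, solves the Maurer--Cartan equation of $L'$ by the $\Linf$-morphism identities, and respects homotopies because the same computation applies over $\mathfrak m_A[t,dt]$ and commutes with the evaluations. Of your two routes to the isomorphism statement, the second one --- reduction to minimal models --- is essentially Manetti's own argument, and the worry you raise about needing homotopy inverses is resolved by the decomposition theorem: every $\Linf$ algebra is isomorphic to the direct sum of a minimal one and a linear contractible one, the contractible summand contributes trivially to $\Def$, and a weak equivalence between minimal models has $f_1$ a linear isomorphism and hence is invertible as an $\Linf$ morphism. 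That route is complete and is the one I would recommend writing up.

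The obstruction-theoretic route has a genuine soft spot, which you half-identify but locate in the wrong place. The issue is not functoriality of the obstruction calculus in $f$ (that part is standard), but your claim that the nonempty fibers of $\Def(L)(B)\to\Def(L)(A)$ over a small extension are affine spaces under $H^1(L)\otimes_\bK I$. The functor $\Def(L)$ is a deformation functor only in the weak, non-homogeneous sense: the action of $H^1(L)\otimes_\bK I$ on those fibers is transitive but need not be free. Transitivity together with the standard smoothness criterion (surjective on tangents, injective on obstructions) does give surjectivity of $\Def(f)$ over $B$ from that over $A$; but for injectivity you would need to compare stabilizers of the two actions, which does not follow from bijectivity of $H^1(f_1)$ alone. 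This is exactly the extra work done by Manetti's inverse function theorem for deformation functors, so either cite that result in place of the torsor argument or use the minimal-model route, which avoids the issue entirely.
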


\subsection{Cohomology jump functors of $\Linf$ pairs.}

\begin{definition}\label{defCJI}\cite{BW}*{\S 2}
 Let $R$ be a noetherian commutative ring and let $M^\ubul$ be a complex of $R$-modules, bounded above, with finitely generated cohomology. Then, there always exists a bounded above complex $F^\ubul$ of finitely generated free $R$-modules and a quasi-isomorphism of complexes $g: F^\ubul \xrightarrow{\sim} M^\ubul$. The {\it cohomology jump ideals} of $M^\ubul$ are the ideals in $R$ defined as 
$$
J^i_k(M^\ubul) = I_{\text{rk}(F^i)-k+1}(d^{i-1} \oplus d^i),
$$
where $d^i: F^i \rightarrow F^{i+1}$ are the differentials of the complex $F^\ubul$ and $I_r$ is the ideal generated by the size $r$ minors. The cohomology jump ideals do not depend on the choice of the free resolution. 
\end{definition}

\begin{defn}\label{defCJFLP}
Let $(L,M)$ be an $L_\infty$ pair. For all integers $i$ and $k$, the {\it cohomology jump functors} of $(L,M)$ are the functors
$$
\Def^i_k (L,M): \texttt{Art} \rightarrow \texttt{Set}
$$
defined for all $(A,\mathfrak{m}_A) \in \texttt{Art}$ by setting
$$
\Def^i_k (L,M; A) = \bigslant{\{\omega \in \MC_L(A) \hspace{0.1cm}\vert \hspace{0.2cm}J^i_k(M\otimes A, d_\omega) = 0\}}{\sim},
$$
where $\sim$ is the homotopy equivalence relation,  $d_\omega:M\otimes A\ra M\otimes A$ is the $A$-linear differential
$$
d_\omega (\_) :=  \sum_{n\geq 0}\frac{1}{n!} m_{n+1}^A(\omega^{\otimes n}, \_\;)
$$
with $m_n^A=m_n\otimes\id_A$ and $m_n$ are the $\Linf$ $L$-module structure maps on $M$, $0!=1$ as usual, and $J^i_k$ are the cohomology jump ideals of the complex $(M\otimes A, d_\omega)$.
\end{defn}

If $(L,M)$ is a dgl pair, then $\Def^i_k(L,M)$ have been defined and studied in \cite{BW}. The rest of this section is devoted to proving that this definition makes sense for all $\Linf$ pairs:

\begin{theorem}\label{thrmWell} For an $\Linf$ pair $(L,M)$, $\Def^i_k(L,M)$ are well-defined subfunctors of $\Def(L)$. 
\end{theorem}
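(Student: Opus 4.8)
The plan is to establish two things: first, that for each Artinian local algebra $(A,\mathfrak{m}_A)$ the subset $\Def^i_k(L,M;A)\subseteq\Def_L(A)$ is well-defined, i.e. that the condition $J^i_k(M\otimes A,d_\omega)=0$ is invariant under the homotopy equivalence relation on $\MC_L(A)$; and second, that the assignment $A\mapsto\Def^i_k(L,M;A)$ is functorial in $A$, so that it really is a subfunctor of $\Def_L$.

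For functoriality, the key observation is that for a morphism $\varphi\colon A\to A'$ in $\tart$ the induced map $\MC_L(A)\to\MC_L(A')$ sends $\omega$ to $\varphi(\omega)$, and the base-changed differential $d_{\varphi(\omega)}$ on $M\otimes A'$ is obtained from $(M\otimes A,d_\omega)$ by applying $\otimes_A A'$. The cohomology jump ideals behave well under base change: by the standard properties recalled in Definition \ref{defCJI}, if $F^\ubul\to M\otimes A$ is a finite free resolution then $F^\ubul\otimes_A A'$ computes $J^i_k(M\otimes A',d_{\varphi(\omega)})$ after a further free resolution, and one has $J^i_k(M\otimes A',d_{\varphi(\omega)})=\varphi(J^i_k(M\otimes A,d_\omega))\cdot A'$, the extension of the ideal. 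Hence the vanishing $J^i_k(M\otimes A,d_\omega)=0$ forces $J^i_k(M\otimes A',d_{\varphi(\omega)})=0$, which is exactly what is needed for the subfunctor property.

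The main point, and the main obstacle, is homotopy invariance. Suppose $\omega_0,\omega_1\in\MC_L(A)$ are homotopy equivalent, witnessed by $z(t,dt)\in\MC_L(A[t,dt])$ with $z(0,0)=\omega_0$, $z(1,0)=\omega_1$. Using the $\Linf$ module structure, $z$ defines an $A[t,dt]$-linear differential $d_z$ on $M\otimes A[t,dt]$ making it a complex; evaluation at $t=c$, $dt=0$ gives the complexes $(M\otimes A,d_{\omega_c})$ for $c=0,1$. The strategy is to show that the two evaluations are homotopy equivalent as complexes of $A$-modules — or, more robustly, to work over the fraction-type algebra and exhibit a chain homotopy over $A[t,dt]$ localized appropriately — so that they have the same cohomology jump ideals, since these are invariants of the quasi-isomorphism type (indeed of the chain-homotopy type, as $J^i_k$ is computed from any free resolution). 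Concretely, I would mimic the argument for dgl pairs in \cite{BW}*{\S 2}: take a finite free resolution $\mathcal{F}^\ubul$ of $M\otimes A[t,dt]$ compatible with the two evaluations, observe that the evaluation maps are quasi-isomorphisms, and deduce the equality of minor ideals $I_r(d^{i-1}\oplus d^i)$ at $t=0$ and $t=1$ by a determinantal/Fitting-ideal argument invariant under change of resolution. The one genuinely new feature compared with the dgla case is that $d_z$ now involves the higher operations $m_{n+1}$, $n\ge 1$, but because $\mathfrak{m}_A$ is nilpotent the series defining $d_z$ is finite and $A[t,dt]$-linear, so the resolution argument goes through verbatim once one checks $d_z\circ d_z=0$, which follows from the $\Linf$ module identities exactly as $d_\omega^2=0$ follows from the Maurer-Cartan equation for $\omega$.

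In summary: (1) record that $d_\omega^2=0$ from the module $\Linf$-relations plus the MC equation, so $(M\otimes A,d_\omega)$ is a genuine complex with well-defined $J^i_k$; (2) prove base-change compatibility of $J^i_k$ to get the subfunctor property; (3) for a homotopy $z(t,dt)$, build the complex $(M\otimes A[t,dt],d_z)$, check $d_z^2=0$, and show the two evaluations at $t=0,1$ induce an equality of cohomology jump ideals by resolving over $A[t,dt]$ and using invariance of Fitting ideals; conclude $\Def^i_k(L,M)$ is a well-defined subfunctor of $\Def(L)$. I expect step (3) to be where the bookkeeping is heaviest, though conceptually it reduces to the dgl-pair case of \cite{BW} once the finiteness from nilpotence of $\mathfrak{m}_A$ is invoked.
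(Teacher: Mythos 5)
Your overall architecture coincides with the paper's: (1) show $d_\omega^2=0$ so that the twisted complex is defined, (2) use base change of cohomology jump ideals for functoriality, (3) prove invariance under homotopy equivalence of Maurer--Cartan elements. Steps (1) and (2) are essentially the paper's argument: the paper gets $d_\omega^2=0$ by twisting the $\Linf$ algebra $L\oplus M$ of Theorem \ref{L oplus M structure} by the Maurer--Cartan element $(\omega,0)$ and restricting $j_1^{(\omega,0)}$ to $M\otimes A$ (Lemma \ref{aomoto definedness}), and it uses \cite{BW}*{Corollary 2.4} exactly as you do for base change. One omission in step (1): for $J^i_k(M\otimes A,d_\omega)$ to be defined you must also check that $(M\otimes A,d_\omega)$ has finitely generated cohomology over $A$; the paper does this with the spectral sequence of the filtration by the subcomplexes $M\otimes\mathfrak{m}_A^s$, on whose associated graded $d_\omega$ reduces to $m_1\otimes\id_A$.

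The substantive problem is step (3), which does not go through as written. The cohomology jump ideals of Definition \ref{defCJI} are defined only for complexes of modules over a commutative noetherian ring, whereas $A[t,dt]$ is a graded-commutative dga with a nonzero degree-one part; ``resolving over $A[t,dt]$ and using invariance of Fitting ideals'' therefore has no meaning. If you retreat to the commutative ring $A[t]$, the complex $M\otimes A[t,dt]$ carries the extra free summands $M\otimes A[t]\,dt$ (shifted in degree), so its naive fibre at $t=c$ is not $(M\otimes A,d_{\omega_c})$, and you would still need to prove that the evaluation maps are quasi-isomorphisms realizing derived base change before any determinantal comparison applies. None of this is mere bookkeeping: what is actually required is an honest quasi-isomorphism of complexes of $A$-modules $(M\otimes A,d_{\omega_0})\xra{\sim}(M\otimes A,d_{\omega_1})$, after which \cite{BW}*{Corollary 2.3} finishes the argument. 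The paper produces this quasi-isomorphism by a route your sketch is missing: the theorem of Lazarev and Yalin (Proposition \ref{ya16}, Lemma \ref{twisted qiso}) that homotopy equivalent Maurer--Cartan elements yield weakly equivalent twisted $\Linf$ algebras, applied to $L\oplus M$ with $(\omega_0,0)\sim(\omega_1,0)$; restricting the first component of the resulting weak equivalence to $M\otimes A$ and identifying it with $d_\omega$ via Proposition \ref{propMTw} gives Corollary \ref{MC-Aomoto}. Converting the homotopy $z(t,dt)$ into such a weak equivalence of twists is the genuinely missing idea; without it, or without a complete reworking of your path-object argument over $A[t]$, step (3) is not a proof.
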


More precisely, we will show first that $(M\otimes A,d_\omega)$ is indeed a complex of $A$-modules with finitely generated cohomology, so that taking cohomology jump ideals makes sense. Secondly, we will show that moding out by homotopy equivalence makes sense, that is, if $\omega_1$ is a Maurer-Cartan element satisfying the vanishing of the cohomology jump ideal condition, then every other Maurer-Cartan element $\omega_2$ homotopy equivalent to $\omega_1$ will satisfy the same vanishing condition.  We will also prove that a morphism of $\Linf$ pairs induces a morphism of the associated cohomology jump subfunctors, and that the  following $\Linf$  version of \cite{BW}*{Theorem 1.2}  holds:

\begin{theorem}\label{cjdf pairs invariance}
If $(f,g): (L,M)\rightarrow (L',M')$ is a weak equivalence, then for all $i, k\in\bZ$ the induced transformation on subfunctors $$(f,g)_*: \Def^i_k(L,M)\longrightarrow\Def^i_k(L',M')$$ is an isomorphism compatible with the isomorphism of functors $f_*:\Def(L)\ra \Def(L')$ from Theorem \ref{thmWLA}.
\end{theorem}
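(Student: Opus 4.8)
The plan is to reduce the statement to a statement about complexes. Since $f\colon L\to L'$ is a weak equivalence of $\Linf$ algebras, Theorem \ref{thmWLA} already provides an isomorphism of functors $f_*\colon\Def(L)\xrightarrow{\sim}\Def(L')$. By the constructions preceding Theorem \ref{thrmWell}, the pair $(f,g)$ induces a morphism of subfunctors sitting in a commutative square
$$
\begin{CD}
\Def^i_k(L,M) @>{(f,g)_*}>> \Def^i_k(L',M')\\
@VVV @VVV\\
\Def(L) @>{f_*}>> \Def(L')
\end{CD}
$$
whose vertical arrows are the defining inclusions, and the vanishing of the jump ideals is a homotopy-invariant condition. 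Hence it suffices to prove that for every $(A,\mA)\in\tart$ and every $\omega\in\MC_L(A)$, with push-forward $\omega':=\sum_{n\ge 1}\tfrac{1}{n!}f^A_n(\omega^{\otimes n})\in\MC_{L'}(A)$, one has $J^i_k(M\otimes A,d_\omega)=0$ for all $i,k$ if and only if $J^i_k(M'\otimes A,d_{\omega'})=0$ for all $i,k$; once this is known, bijectivity of $f_*$ on $A$-points forces $(f,g)_*$ to be bijective on $A$-points, naturally in $A$.

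The heart of the matter is to twist the module morphism $g$ by $\omega$. I would set
$$
g_\omega:=\sum_{n\ge 1}\frac{1}{(n-1)!}\,g^A_n\big(\omega^{\otimes(n-1)},\ \_\ \big)\colon\quad M\otimes A\longrightarrow M'\otimes A,
$$
a finite sum because $\mA$ is nilpotent, and verify that it is a morphism of complexes $(M\otimes A,d_\omega)\to(M'\otimes A,d_{\omega'})$. This is the usual ``twist by a Maurer--Cartan element'' operation: inserting $\omega$ into the $L$-inputs of the defining identities of an $\Linf$-module morphism (Definition \ref{defMLM}) and applying the Maurer--Cartan equation for $\omega$ collapses the left-hand side to $g_\omega\circ d_\omega$ and the right-hand side to $d_{\omega'}\circ g_\omega$; one uses here that twisting by $\omega$ the $L$-module structure pulled back to $M'$ along $f$ coincides with twisting the native $L'$-module structure by $\omega'=f_*\omega$. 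Reorganizing these unshuffle sums with repeated insertions of $\omega$ is the step I expect to be the main obstacle, though it runs entirely parallel to the verification that $d_\omega^2=0$ in the proof of Theorem \ref{thrmWell}.

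It then remains to see that $g_\omega$ is a quasi-isomorphism, and to conclude. Filter $A$ by the powers $A=\mA^0\supset\mA\supset\cdots\supset\mA^N=0$ of its maximal ideal: the map $g_\omega$ preserves the induced filtrations on $M\otimes A$ and $M'\otimes A$, and on the associated graded the terms $g^A_n(\omega^{\otimes(n-1)},\_)$ with $n\ge 2$ vanish, since inserting $\omega\in L^1\otimes\mA$ strictly raises $\mA$-adic order; thus $\gr g_\omega=g_1\otimes\gr(A)$, which is a quasi-isomorphism because $g_1$ is. An induction on the filtration length, using the five lemma for the long exact cohomology sequences of $0\to\mA^{p+1}(M\otimes A)\to\mA^p(M\otimes A)\to\gr^p(M\otimes A)\to 0$ and its analogue for $M'$, shows that $g_\omega$ is a quasi-isomorphism of complexes of $A$-modules (with finitely generated cohomology, by Theorem \ref{thrmWell}). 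Since cohomology jump ideals are invariant under quasi-isomorphism (Definition \ref{defCJI}), this yields $J^i_k(M\otimes A,d_\omega)=J^i_k(M'\otimes A,d_{\omega'})$ for all $i,k$, which is exactly the equivalence required above. Therefore $f_*$ carries $\Def^i_k(L,M;A)$ bijectively onto $\Def^i_k(L',M';A)$ for every $A$, i.e. $(f,g)_*$ is an isomorphism of subfunctors compatible with $f_*$, as claimed.
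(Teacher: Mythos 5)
Your proof is correct, and its overall architecture is the same as the paper's: both arguments reduce the theorem to showing that the twisted complexes $(M\otimes A,d_\omega)$ and $(M'\otimes A,d_{\omega'})$ are quasi-isomorphic for every $A\in\tart$ and every $\omega\in\MC_L(A)$, and then conclude via the invariance of cohomology jump ideals under quasi-isomorphism and base change. Where you genuinely diverge is in how that quasi-isomorphism is produced. The paper never manipulates the module morphism $g$ at the twisted level: it packages $(f,g)$ into the single $\Linf$-algebra morphism $f\oplus g\colon L\oplus M\to L'\oplus M'$ via Proposition \ref{formal pair iff formal oplus}, applies the cited twisting result for $\Linf$ algebras (Proposition \ref{ya16}) to get a weak equivalence $((L\oplus M)\otimes A)^{(\omega,0)}\to((L'\oplus M')\otimes A)^{(\omega',0)}$, and then restricts to the module summand using Proposition \ref{propMTw} and Lemma \ref{aomoto definedness}. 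This detour is designed precisely to avoid the computation you flag as the main obstacle: the verification that $g_\omega=\sum_{n\ge1}\tfrac{1}{(n-1)!}g^A_n(\omega^{\otimes(n-1)},\_)$ intertwines $d_\omega$ and $d_{\omega'}$, which in your route must be extracted by hand from the unshuffle identities of Definition \ref{defMLM} and which you leave unverified (it is standard but sign-heavy; it is exactly the module shadow of Proposition \ref{ya16}). In exchange, your $\mA$-adic filtration and five-lemma argument makes the quasi-isomorphism property of $g_\omega$ self-contained, whereas the paper outsources this to Proposition \ref{ya16} and Lemma \ref{twisted qiso}. A small point in your favor: you push $\omega$ forward by the full Maurer--Cartan push-forward $\omega'=\sum_{n\ge1}\tfrac{1}{n!}f^A_n(\omega^{\otimes n})$, which is the element one must twist by when $f$ has higher components; the paper writes $\omega'=f_1(\omega)$ at the corresponding step, which should be read as shorthand for the same push-forward.
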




\subsection{Twisted complexes.} Before we give the proofs of these theorems, we need some preliminary results.

\begin{proposition} \cite{ChuLaz11} \cite{Yal16} 
\label{twisted L_infty is L_infty} Let $L$ be an $\Linf$ algebra. Let $(A,\mathfrak{m}_A)\in \art$ and consider $\omega\in \MC_L(A)$, a Maurer-Cartan element. Then, one can construct an  $L_\infty$ algebra $(L\otimes A)^\omega$ whose underlying graded vector space is the same as $L\otimes A$, and the  $L_\infty$ structure is
$$
\{l_n^\omega: (L\otimes A)^{\otimes n} \rightarrow L\otimes A\}_{n\geq 1}
$$
defined by
$$
l_n^\omega (a_1,\dots, a_n) = \sum_{i\geq 0} \frac{1}{i!}l_{i+n}(\omega^{\otimes i}, a_1, \dots, a_n).
$$
\end{proposition}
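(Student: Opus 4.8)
The plan is to verify directly that the maps $l_n^\omega$ satisfy the generalized Jacobi identities, deducing this from the generalized Jacobi identities for the original $L_\infty$ structure $\{l_n\}$ on $L$ (extended $A$-linearly to $L\otimes A$) together with the Maurer-Cartan equation satisfied by $\omega$. First I would fix notation: since $\mathfrak{m}_A$ is nilpotent, all the infinite sums defining $l_n^\omega$ are finite, so there is no convergence issue and $(L\otimes A)^\omega$ is a well-defined graded vector space with degree $2-n$ operations $l_n^\omega$; graded antisymmetry of $l_n^\omega$ is immediate from that of the $l_m$ and the fact that $\omega\in L^1\otimes\mathfrak{m}_A$ has odd total degree, so transposing two copies of $\omega$ or moving a copy of $\omega$ past an $a_j$ produces exactly the Koszul signs that are already accounted for (one should be slightly careful here that the $\frac{1}{i!}$ together with the symmetry in the $\omega$-slots is consistent, which it is because the $\omega^{\otimes i}$ block is symmetric).

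The core computation is then to expand
$$
\sum_{(i,j,\sigma)\in\mathfrak{S}_n}\chi(\sigma)(-1)^{i(j-1)}\,l_j^\omega\bigl(l_i^\omega\otimes\mathbf{1}^{\otimes(j-1)}\bigr)(a_{\sigma(I)})
$$
by substituting the definition $l_m^\omega(-)=\sum_{s\geq 0}\tfrac{1}{s!}l_{m+s}(\omega^{\otimes s},-)$ into both the inner and the outer operation. Collecting terms by the total number of $\omega$'s inserted, one obtains, for each fixed configuration of $a$'s, a sum of expressions of the form $l_{j'}(l_{i'}(\omega^{\otimes\bullet},\dots),\omega^{\otimes\bullet},\dots)$ with appropriate multinomial coefficients. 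The standard trick is to reorganize this into the generalized Jacobi identity for $\{l_m\}$ applied to the extended list $(\omega,\dots,\omega,a_{\sigma(1)},\dots,a_{\sigma(n)})$ with the right multiplicity of $\omega$'s, summed with the combinatorial weights $\tfrac{1}{(\#\omega)!}$; the Jacobi terms in which \emph{no} $\omega$ lands in the inner bracket together with \emph{at least one} $a$ outside conspire, after using the Maurer-Cartan relation $\sum_{p\geq 1}\tfrac{1}{p!}l_p(\omega^{\otimes p})=0$ to kill the ``all-$\omega$ inner bracket'' contributions, to give exactly zero. In other words: the Jacobi identity for $l^\omega$ evaluated on $(a_1,\dots,a_n)$ equals a $\tfrac{1}{s!}$-weighted sum over $s\geq 0$ of the Jacobi identity for $l$ evaluated on $(\omega^{(s)},a_1,\dots,a_n)$, plus cross-terms proportional to the Maurer-Cartan curvature, all of which vanish. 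This is a purely bookkeeping manipulation of unshuffles and Koszul signs; I would phrase it as ``a direct computation'' and refer to \cite{ChuLaz11} or \cite{Yal16} for the sign verification, since the excerpt already cites those for the statement.

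The main obstacle is precisely the sign and combinatorics bookkeeping: one must check that the Koszul signs $\chi(\sigma)$ and the $(-1)^{i(j-1)}$ factors produced by splitting each $l^\omega$ into $l$'s match term-by-term with those appearing in the Jacobi identity for the enlarged argument list, and that the multinomial coefficients (from choosing how the $\omega$-slots distribute between the inner and outer operations) telescope correctly against the $\tfrac{1}{s!}$ normalizations. A clean way to avoid grinding through this is to pass to the bar/coalgebra picture: an $L_\infty$ structure on $L$ is a square-zero coderivation $Q$ of degree $1$ on the cofree cocommutative coalgebra on $L[1]$, a Maurer-Cartan element $\omega$ is an element inducing an automorphism (``twist'') $e^{\omega}$ of that coalgebra, and $l^\omega$ is simply the coderivation $e^{-\omega}Qe^{\omega}$ (equivalently $Q$ transported along the twist), which is automatically square-zero because conjugation preserves $Q^2=0$ and the Maurer-Cartan equation is exactly the statement that $e^{\omega}$ is well-defined / that the constant term vanishes. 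I would present the hands-on version as the official proof for self-containedness but remark that the conceptual one-line argument via twisting the codifferential is available, citing \cite{ChuLaz11}, \cite{Yal16}.
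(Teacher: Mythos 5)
The paper does not prove this proposition at all: it is stated as a recalled result with citations to \cite{ChuLaz11} and \cite{Yal16}, and the only remark the authors add is the observation (which you also make) that the sum is finite because $\mathfrak{m}_A$ is nilpotent. So your proposal cannot be compared to an in-paper argument; what it does is reproduce, in outline, the standard proof found in the cited references. That outline is correct: the expansion of the generalized Jacobi identity for $l^\omega$ by the total number $N=s+t$ of inserted copies of $\omega$, the matching of those terms against the unshuffles in the Jacobi identity for $l$ on the extended list $(\omega^{\otimes N},a_1,\dots,a_n)$, the cancellation of the binomial count $\binom{N}{s}$ of $\omega$-distributions against the $\tfrac{1}{s!}\tfrac{1}{t!}$ normalizations, and the use of the Maurer--Cartan equation to kill exactly the leftover unshuffles in which the inner operation receives only $\omega$'s, is the correct bookkeeping skeleton. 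Deferring the Koszul-sign verification to the references is a defensible choice given that the paper itself defers the entire statement; if you wanted a self-contained proof you would have to carry out that verification, and you should be aware that the sign conventions as literally written in Section 2 of this paper (Koszul sign $(-1)^{|a||b|}$ without the permutation sign) would make two copies of the odd element $\omega$ anticommute, so one must work in the convention of the cited sources for the symmetric $\omega$-block argument to go through. Your closing remark about the coalgebra picture (twisting the codifferential by the coalgebra automorphism attached to $\omega$) is also accurate and is essentially how \cite{ChuLaz11} organizes the proof, so it would be the cleaner route if you did want to write out full details.
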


This process is known as {\it twisting by $\omega$}. Note that the sum above is finite, since $\omega\in L^1\otimes\mA$ and $\mA$ is a nilpotent ideal. 

\begin{rmk}
One can show easily that a morphism of $\Linf$ algebras $f:L\ra L'$ induces a morphism of twisted $L_\infty$ algebras $(L\otimes A)^\omega\ra(L'\otimes A)^{f_1(\omega)}$, since for $\omega\in\MC_L(A)$ one has that $f_1(\omega)\in\MC_{L'}(A)$.
\end{rmk}

We introduce twisting of modules via Theorem \ref{L oplus M structure}:

\begin{lemma}
\label{twisted L oplus M} Let $L$ be an $\Linf$ algebra, $M$ an  $L$-module, $A\in \art$, and $\omega\in \MC_{L}(A)$. Then $(\omega, 0) \in \MC_{L\oplus M}(A)$ and the twist by $(\omega, 0)$ of the graded vector space $(L\oplus M)\otimes A$ is the  $L_\infty$ algebra 
with structure maps
$$\{j^{(\omega,0)}_n: ((L\oplus M)\otimes A)^{\otimes n}\rightarrow (L\oplus M)\otimes A\}_{n\geq 1}
$$
defined by
$$
j^{(\omega,0)}_n\big((a_1, \xi_1), \dots, (a_n, \xi_n)\big) = \sum_{i\geq 0} \frac{1}{i!} j_{i+k}\big((\omega,0)^{\otimes i}, (a_1, \xi_1), \dots (a_n, \xi_n)\big),
$$
where $a_i \in L\otimes A$, $\xi_i \in M\otimes A$, and $j_n$ are as in Theorem \ref{L oplus M structure}.
\end{lemma}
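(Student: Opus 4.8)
The plan is to verify the two assertions separately, both by unwinding the explicit formula for the structure maps $j_n$ of $L\oplus M$ recorded in Theorem \ref{L oplus M structure}. Neither step involves more than a direct computation, so I do not anticipate a genuine obstacle; the only mild subtlety is keeping track of the Koszul signs appearing in the formula for $j_n$, which in the end play no role in the outcome.

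First I would check that $(\omega,0)$ satisfies the Maurer--Cartan equation for the $\Linf$ algebra $(L\oplus M)\otimes A$, that is, that $\sum_{n\geq 1}\frac{1}{n!}\,j_n^A\big((\omega,0)^{\otimes n}\big)=0$ in $(L\oplus M)^1\otimes\mA$. Evaluating the formula of Theorem \ref{L oplus M structure} on the $n$-tuple all of whose entries equal $(\omega,0)$, the first component is $l_n(\omega,\dots,\omega)$, while the second component is a signed sum of terms of the form $m_n(\omega,\dots,\widehat{\omega},\dots,\omega,0)$, each of which vanishes because the module slot is filled by $0\in M\otimes A$ and $m_n$ is linear in that argument. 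Hence $j_n^A\big((\omega,0)^{\otimes n}\big)=\big(l_n^A(\omega^{\otimes n}),0\big)$, and summing with the factorial weights gives $\big(\sum_{n\geq 1}\frac{1}{n!}\,l_n^A(\omega^{\otimes n}),\,0\big)$, which is $(0,0)$ because $\omega\in\MC_L(A)$. This proves $(\omega,0)\in\MC_{L\oplus M}(A)$.

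With $(\omega,0)$ known to be a Maurer--Cartan element of the $\Linf$ algebra $L\oplus M$, the second assertion is then immediate: Proposition \ref{twisted L_infty is L_infty} applies verbatim to the $\Linf$ algebra $L\oplus M$ together with the Maurer--Cartan element $(\omega,0)$, producing an $\Linf$ algebra $((L\oplus M)\otimes A)^{(\omega,0)}$ on the same underlying graded vector space whose structure maps are $j_n^{(\omega,0)}(\,\cdot\,)=\sum_{i\geq 0}\frac{1}{i!}\,j_{i+n}\big((\omega,0)^{\otimes i},\,\cdot\,\big)$, which is exactly the stated formula. The sum is finite since $\omega\in L^1\otimes\mA$ and $\mA$ is a nilpotent ideal, so this completes the argument.
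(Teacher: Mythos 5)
Your proposal is correct and follows essentially the same route as the paper: plug $(\omega,0)$ into the formula of Theorem \ref{L oplus M structure}, observe that the module component vanishes because the module slot is occupied by $0$ while the algebra component vanishes because $\omega\in\MC_L(A)$, and then invoke Proposition \ref{twisted L_infty is L_infty} to obtain the twisted structure. No gaps.
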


\begin{proof}
First one needs to show that $(\omega, 0)$ is indeed a Maurer-Cartan element in $(L\oplus M)\otimes A$. We can just plug $(\omega, 0)$ in the Maurer-Cartan equation and check that it is equal to zero through Theorem \ref{L oplus M structure}:
$$
\sum_{n\geq 1} \frac{1}{n!} j_n\big((\omega,0)^{\otimes n}) =$$
$$
= \sum_{n\geq 1} \frac{1}{n!} \left(l^A_n(\omega, \dots, \omega), \sum_{i=1}^{n} (-1)^{n-i} m^A_n(\omega, \dots, \hat{\omega}, \dots, \omega, 0)\right)=(0,0).
$$
Here, the first component is zero since $\omega\in \MC_L(A)$. This tells us that we can twist by $(\omega, 0)$. Hence, by Proposition \ref{twisted L_infty is L_infty}, $((L\oplus M)\otimes A)^{(\omega,0)}$ is an $L_\infty$ algebra.
\end{proof}

\begin{lemma}
\label{aomoto definedness} With the notation as in Definition \ref{defCJFLP}, the map $d_\omega$ makes $(M\otimes A, d_\omega)$ into a well-defined complex of $A$-modules.
\end{lemma}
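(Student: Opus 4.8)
The plan is to reduce the statement to two things we must verify: first, that $d_\omega$ squares to zero, so that $(M\otimes A, d_\omega)$ is genuinely a complex; and second, that its cohomology is finitely generated over $A$. For the first point, the natural approach is to avoid wrestling directly with the module-side generalized Jacobi identity of Definition \ref{L-module}, and instead import the statement from the algebra side via the trick used in Proposition \ref{module structure recovered} and Lemma \ref{twisted L oplus M}. Concretely, I would consider the $\Linf$ algebra $L\oplus M$ of Theorem \ref{L oplus M structure}, observe by Lemma \ref{twisted L oplus M} that $(\omega,0)\in\MC_{L\oplus M}(A)$, and form the twisted $\Linf$ algebra $((L\oplus M)\otimes A)^{(\omega,0)}$ of Proposition \ref{twisted L_infty is L_infty}. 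Since $(\omega,0)$ is a Maurer-Cartan element, the twisted structure map $j_1^{(\omega,0)}$ is a differential, i.e. $(j_1^{(\omega,0)})\circ(j_1^{(\omega,0)})=0$; this is the $n=1$ case of the generalized Jacobi identity for the twisted algebra, which follows formally from $\sum_n \frac{1}{n!}j_n((\omega,0)^{\otimes n})=0$.

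Next I would unwind what $j_1^{(\omega,0)}$ is when restricted to the $M\otimes A$ summand. By the formula in Lemma \ref{twisted L oplus M}, $j_1^{(\omega,0)}(a,\xi)=\sum_{i\ge 0}\frac{1}{i!}j_{i+1}((\omega,0)^{\otimes i},(a,\xi))$, and by the explicit form of $j_n$ in Theorem \ref{L oplus M structure} the $M$-component of this, evaluated on $(0,\xi)$, collects exactly the terms $\sum_{i\ge 0}\frac{1}{i!}m_{i+1}^A(\omega^{\otimes i},\xi)$ — here one uses that in each $j_{i+1}((\omega,0)^{\otimes i},(0,\xi))$ the only surviving summand in the second component is the one that omits all the $\omega$'s and keeps $\xi$, because the $\hat a_j$-omission sum of Theorem \ref{L oplus M structure} kills any term where an $\omega=a_j$ is omitted only if $\xi_j=0$, and the first component $l_{i+1}(\omega,\dots,\omega,0)=0$. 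Thus $j_1^{(\omega,0)}|_{M\otimes A}=d_\omega$ under the identification of $M\otimes A$ with $\{0\}\oplus(M\otimes A)\subset (L\oplus M)\otimes A$, and since this subspace is a subcomplex (the $L$-component of $j_1^{(\omega,0)}(0,\xi)$ vanishes), $d_\omega^2=0$ follows. One should double-check the Koszul signs $\kappa$ from Definition \ref{L-module} match the signs coming out of Theorem \ref{L oplus M structure}, but this is exactly the bookkeeping already performed in Proposition \ref{module structure recovered}.

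For finite generation of $H(M\otimes A,d_\omega)$, I would argue by the $\mathfrak{m}_A$-adic filtration on $A$. Write $d_\omega = m_1\otimes\id_A + (\text{terms landing in } M\otimes\mathfrak{m}_A)$, since every summand with $n\ge 1$ factors of $\omega$ carries at least one factor from $\mathfrak{m}_A$. So on the associated graded of the $\mathfrak{m}_A$-adic filtration the induced differential is just $m_1$, and $\gr H(M\otimes A,d_\omega)$ is a subquotient of $HM\otimes\gr A$. Since $A$ is Artinian local of finite type over $\bK$, $\gr A$ is a finite-dimensional $\bK$-vector space, and $HM$ is finitely generated by the standing assumption in Definition \ref{L-module} (indeed $M$ is bounded above with finitely generated cohomology); hence $HM\otimes\gr A$ is a finitely generated $A$-module (it is even finite-dimensional over $\bK$ in each degree, on the relevant finite range of degrees). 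A finite-dimensional-in-each-degree, bounded-above conclusion for $\gr H(M\otimes A,d_\omega)$ forces the same for $H(M\otimes A,d_\omega)$ itself, because $A$ is complete in the $\mathfrak{m}_A$-adic topology (it is Artinian) so the filtration is finite and one can lift generators degree by degree.

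The main obstacle I anticipate is purely the sign and combinatorial bookkeeping in the second paragraph: checking that the $M$-component of the twisted differential $j_1^{(\omega,0)}$ on $(L\oplus M)\otimes A$ really is $d_\omega$ with the precise sign conventions of Definition \ref{defCJFLP}, including the factor $\frac{1}{i!}$ and the Koszul sign $\kappa$. Everything else is formal: $d_\omega^2=0$ is immediate once the identification is made, and the finite-generation statement is a routine filtration argument using that $A$ is Artinian. I would therefore organize the write-up as: (1) cite Lemma \ref{twisted L oplus M} to get the twisted $\Linf$ algebra and its differential $j_1^{(\omega,0)}$ with $(j_1^{(\omega,0)})^2=0$; (2) identify $j_1^{(\omega,0)}$ restricted to the module summand with $d_\omega$, conclude $d_\omega^2=0$; (3) run the $\mathfrak{m}_A$-adic filtration argument for finite generation of cohomology.
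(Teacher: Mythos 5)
Your proposal is correct and follows essentially the same route as the paper: the paper also realizes $d_\omega$ as the restriction to the $M\otimes A$ summand of the twisted differential $j_1^{(\omega,0)}$ on $((L\oplus M)\otimes A)^{(\omega,0)}$, using Lemma \ref{twisted L oplus M} and the observation that only the term omitting $0\in L$ survives in the second component. The finite-generation claim you include is not part of this lemma in the paper (it is proved separately in Theorem \ref{thrmWell} via the same $\mathfrak{m}_A$-adic filtration argument you sketch), but your treatment of it is also consistent with the paper's.
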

\begin{proof} First of all, $d_\omega$ is a well-defined $A$-linear map since the sum in its definition is finite.  Now, we consider the map $j_1^{(\omega, 0)}$ from  Lemma \ref{twisted L oplus M} and evaluate it at $(0,\xi)\in (L\otimes A)\oplus (M\otimes A)$:
$$
j^{(\omega,0)}_1(0, \xi) = \sum_{i\geq 0} \frac{1}{i!} j_{i+1}\big((\omega,0)^{\otimes i}, (0, \xi)\big) = \sum_{i\geq 0} \frac{1}{i!} \big( l_{i+1}(\omega^{\otimes i}, 0),  m_{i+1}(\omega^{\otimes i-1},0, 0) +\dots
$$
$$
\hspace{5cm}+ m_{i+1}(\omega^{\otimes i-1},0, 0) + m_{i+1}(\omega^{\otimes i}, \xi)\big).
$$
Note that in the second component, when omitting one of the $\omega$'s we get
$$
m_{i+1}(\omega^{\otimes i-1},0, 0)\big) = 0.
$$
So the only nonzero summand is the $(i+1)$-st, omitting $0\in L$. For the first component of $j^\omega_1(0, \xi)$, we always get zero. So then we have 
$$j^{(\omega,0)}_1(0, \xi) = \big(0, \sum_{i\geq 1} \frac{1}{i!}m_{i+1}(\omega^{\otimes i}, \xi)\big)
$$
Restricting the $L_\infty$ algebra differential $j^{(\omega,0)}_1$ to $M\otimes A$, gives us a differential on $M\otimes A$, easily seen to equal $d_\omega$.
\end{proof}

Proposition \ref{L oplus M structure} and Lemma \ref{twisted L oplus M} imply the existence of a natural twisted $\Linf$ module structure $(M\otimes A)^\omega$ on $M\otimes A$ over the twisted $\Linf$ algebra $(L\otimes A)^\omega$:

\begin{proposition}
\label{propMTw}
Let $L$ be an $\Linf$ algebra, $M$ an $\Linf$ $L$-module, $A\in \art$, and $\omega\in \MC_{L}(A)$. Then the collection of restrictions ${j_n^{(\omega, 0)}}_{| M\otimes A}$ is an $\Linf$ $(L\otimes A)^\omega$-module structure on $M\otimes A$, denoted $(M\otimes A)^\omega$. Moreover, there is an equality of twisted $\Linf$ algebras
$$
((L\otimes A)\oplus (M\otimes A))^{(\omega,0)} = (L\otimes A)^\omega\oplus (M\otimes A)^\omega.
$$
\end{proposition}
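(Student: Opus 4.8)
The plan is to reduce everything to the explicit form of the twisted structure maps $j_n^{(\omega,0)}$ from Lemma \ref{twisted L oplus M}, exploiting that $(\omega,0)$ has vanishing $M$-component. Recall from Lemma \ref{twisted L oplus M} that $(\omega,0)\in\MC_{L\oplus M}(A)$, so by Proposition \ref{twisted L_infty is L_infty} the twist $((L\oplus M)\otimes A)^{(\omega,0)}=((L\otimes A)\oplus(M\otimes A))^{(\omega,0)}$ is a genuine $\Linf$ algebra with structure maps $j_n^{(\omega,0)}$; this is the object on the left-hand side of the asserted equality.

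First I would compute $j_n^{(\omega,0)}$ by inserting the formula of Theorem \ref{L oplus M structure} for each $j_{i+n}$ occurring in the twisting sum of Lemma \ref{twisted L oplus M}. Because the $M$-component of $(\omega,0)$ is zero, every summand of the $M$-component of $j_{i+n}\big((\omega,0)^{\otimes i},(a_1,\xi_1),\dots,(a_n,\xi_n)\big)$ in which one of the first $i$ slots is the ``omitted and carried along'' one vanishes, so only the last $n$ slots contribute. Collecting terms, one obtains
\[
j_n^{(\omega,0)}\big((a_1,\xi_1),\dots,(a_n,\xi_n)\big)=\Big(l_n^\omega(a_1,\dots,a_n),\ \sum_{p=1}^n(-1)^{\Xi(n,p)}m_n^\omega(a_1,\dots,\hat{a_p},\dots,a_n,\xi_p)\Big),
\]
where $l_n^\omega$ is the twisted $\Linf$ structure on $L\otimes A$ of Proposition \ref{twisted L_infty is L_infty}, the sign $\Xi(n,p)$ is the one appearing in Theorem \ref{L oplus M structure}, and
\[
m_n^\omega(a_1,\dots,a_{n-1},\xi):=\sum_{i\geq 0}\frac{1}{i!}\,m_{i+n}(\omega^{\otimes i},a_1,\dots,a_{n-1},\xi).
\]
The one point requiring care is the Koszul bookkeeping: since the omitted slot always lies among the last $n$ entries and the sign in the formula of Theorem \ref{L oplus M structure} only involves the degrees of the entries lying after it, the prepended $\omega$-slots (each of degree $1$) contribute nothing, so the sign is exactly $\Xi(n,p)$ with no correction; likewise twisting itself inserts no extra signs by Proposition \ref{twisted L_infty is L_infty}.

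Next I would observe that the displayed formula is precisely the shape produced by the construction of Theorem \ref{L oplus M structure} out of the $\Linf$ algebra $(L\otimes A)^\omega$ together with the collection $m^\omega=\{m_n^\omega\}$. In particular, evaluating on $\big((a_1,0),\dots,(a_{n-1},0),(0,\xi)\big)$ and projecting to $M\otimes A$ — exactly the manipulation of Proposition \ref{module structure recovered} — returns $m_n^\omega(a_1,\dots,a_{n-1},\xi)$, which identifies $m_n^\omega$ with the restriction $j_n^{(\omega,0)}|_{M\otimes A}$ and in particular recovers $m_1^\omega=d_\omega$ of Lemma \ref{aomoto definedness}. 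To see that $(M\otimes A,m^\omega)$ is an $\Linf$ module over $(L\otimes A)^\omega$ in the sense of Definition \ref{L-module}, I would feed the generalized Jacobi identities of the $\Linf$ algebra $\{j_n^{(\omega,0)}\}$ into test tuples $\big((a_1,0),\dots,(a_{n-1},0),(0,\xi)\big)$ and project to $M\otimes A$; using the pair shape of $j^{(\omega,0)}$ just established, the surviving terms are exactly the left-hand side of the $\Linf$-module relation of Definition \ref{L-module} for $\{m_n^\omega\}$ over $\{l_n^\omega\}$, and the conventional signs $\kappa$ of that definition are precisely the ones produced by the Koszul rule in this projection — the same computation that underlies Proposition \ref{module structure recovered}. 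Graded antisymmetry of $m_n^\omega$ in its $L\otimes A$-arguments and its degree $2-n$ are immediate from the definition.

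Finally, the claimed equality of twisted $\Linf$ algebras becomes a tautology: the left-hand side $\big((L\otimes A)\oplus(M\otimes A)\big)^{(\omega,0)}$ has structure maps $j_n^{(\omega,0)}$, and by the displayed formula these are literally the maps produced by Theorem \ref{L oplus M structure} from the pair $\big((L\otimes A)^\omega,(M\otimes A)^\omega\big)$, i.e.\ the structure maps of the right-hand side. I expect the only genuine obstacle to be the sign bookkeeping in the expansion of $j_n^{(\omega,0)}$ and in the projection of the Jacobi identities; everything else is formal once one notices that the vanishing $M$-component of $(\omega,0)$ collapses the relevant double sums to the stated form.
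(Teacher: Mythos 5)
Your proposal is correct and follows essentially the same route as the paper: both proofs reduce to the single computation that expanding $j_n^{(\omega,0)}=\sum_{i\ge 0}\tfrac{1}{i!}\,j_{i+n}((\omega,0)^{\otimes i},-)$ via Theorem \ref{L oplus M structure}, and using that the $M$-component of $(\omega,0)$ vanishes so only the last $n$ slots contribute (with sign exactly $\Xi(n,p)$), yields the pair-form maps built from $l^\omega$ and $m^\omega$, whence $j_n^{(\omega,0)}=j_n^\omega$. Your additional explicit check of the module axioms by projecting the Jacobi identities is a harmless elaboration of what the paper leaves implicit via Proposition \ref{module structure recovered}.
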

\begin{proof}
From Lemma \ref{twisted L oplus M}, we know the form of the twisted $\Linf$ algebra structure on the left hand side:
\[j^{(\omega,0)}_n\big((a_1, \xi_1), \dots, (a_n, \xi_n)\big) = \sum_{i\geq 0} \frac{1}{i!} j_{i+n}\big((\omega,0)^{\otimes i}, (a_1, \xi_1), \dots (a_n, \xi_n)\big),\]
where $a_i \in L\otimes A$, $\xi_i \in M\otimes A$, and $j_n$ are as in Theorem \ref{L oplus M structure}.

On the right hand side, going by parts we first have the twisted $\Linf$ algebra structure on $(L\otimes A)^{\omega}$, which by Proposition \ref{twisted L_infty is L_infty} is:
\[l_n^\omega (a_1,\dots, a_n) = \sum_{i\geq 0} \frac{1}{i!}l_{i+n}(\omega^{\otimes i}, a_1, \dots, a_n).\]
The twisted $(L\otimes A)^{\omega}$-module structure is given by ${j^{(\omega,0)}_n}_{\vert M}$. Looking at Proposition \ref{module structure recovered}, we have the following $L^\omega$-module structure:

\[m_n^\omega(a_1,\dots, a_{n-1}, \xi) = \sum_{i\geq 0}\frac{1}{i!}m_{i+n}(\omega^{\otimes i}, a_1, \dots, a_{n-1}, \xi).\]
Now, we can apply Theorem \ref{L oplus M structure} on the structures $l^\omega$ and $m^\omega$, to get
\begin{displaymath}
\begin{split}
j_n^\omega & \big((a_1, \xi_1), \dots, (a_n, \xi_n)\big)\\
&=\left(l^\omega_n(a_1, \dots, a_n), \sum_{q=1}^{n} (-1)^{\Xi(n,q)} m_n^\omega(a_1, \dots, \hat{a}_q, \dots, a_n, \xi_q)\right)\\
&= \left(\sum_{i\geq 0} \frac{1}{i!}l_{i+n}(\omega^{\otimes i}, a_1, \dots, a_n), \sum_{q=1}^{n} (-1)^{\Xi(n,q)} \sum_{i\geq 0}\frac{1}{i!}m_{i+n}(\omega^{\otimes i}, a_1, \dots,\hat{a}_q, \dots, a_{n}, \xi_q)\right)\\
&=  \sum_{i\geq 0}\frac{1}{i!}\left(l_{i+n}(\omega^{\otimes i}, a_1, \dots, a_n), \sum_{q=1}^{n} (-1)^{\Xi(n,q)} m_{i+n}(\omega^{\otimes i}, a_1, \dots,\hat{a}_q, \dots, a_{n}, \xi_q)\right)\\
&= \sum_{i\geq 0} \frac{1}{i!} j_{i+n}\big((\omega,0)^{\otimes i}, (a_1, \xi_1), \dots (a_n, \xi_n)\big)
\end{split}
\end{displaymath}

Thus, we have shown that 
\[j_n^{(\omega,0)}=j_n^\omega.\]

Since the underlying graded vector spaces also coincide, we conclude that the $\Linf$ algebras are the same.
\end{proof}

We will also need the following:

\begin{proposition}
\label{ya16} \cite{Laz}*{\S 6} \cite{Yal16}*{Proposition 3.8} Let $L, L'$ be $\Linf$ algebras,  $A\in\art$, and $\omega\in \MC_{L}(A)$. Let $f: L\otimes A\rightarrow L'\otimes A$ be a weak equivalence of $\Linf$ algebras. Then $f$ gives a weak equivalence of twisted $L_\infty$ algebras $f^\omega: (L\otimes A)^\omega \rightarrow (L'\otimes A)^{f_1(\omega)}$.
\end{proposition}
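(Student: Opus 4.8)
The plan is to write the twisted morphism down explicitly and then reduce the only nontrivial point --- that it is a quasi-isomorphism in degree one --- to an $\mathfrak{m}_A$-adic filtration argument. Concretely, define $f^\omega=\{f^\omega_n\}_{n\ge 1}$ by
$$
f^\omega_n(a_1,\dots,a_n):=\sum_{i\ge 0}\frac{1}{i!}\,f_{n+i}\big(\omega^{\otimes i},a_1,\dots,a_n\big),\qquad a_1,\dots,a_n\in L\otimes A,
$$
the sum being finite since $\omega$ has coefficients in the nilpotent ideal $\mathfrak{m}_A$. The first task is to check that $f^\omega$ is a morphism of $\Linf$ algebras $(L\otimes A)^\omega\to (L'\otimes A)^{f_1(\omega)}$. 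The conceptual route is via the bar construction: an $\Linf$ structure on a graded vector space $V$ is a square-zero degree-one coderivation $Q_V$ on the cofree conilpotent cocommutative coalgebra on $V[1]$; a Maurer--Cartan element $\omega$ of $L\otimes A$ determines a coalgebra automorphism $e^{\omega}$; twisting by $\omega$ is conjugation $Q\mapsto e^{-\omega}Q\,e^{\omega}$; and an $\Linf$ morphism $f$ is a coalgebra map $F$ with $FQ_{L\otimes A}=Q_{L'\otimes A}F$. Then $e^{-f_1(\omega)}\circ F\circ e^{\omega}$ is again a coalgebra map, it intertwines the two conjugated codifferentials, and reading off its corestriction to the generators recovers exactly the $f^\omega_n$ above. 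Alternatively, one may verify the identities of Definition \ref{inf-morphism} for $f^\omega$ by hand, inserting the extra $\omega$-slots on both sides and matching them against the untwisted identities for $f$; this is the verification called routine in the remark after Proposition \ref{twisted L_infty is L_infty}.

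The content of the proposition is that $f^\omega_1$ is a quasi-isomorphism, and this is the only place the weak-equivalence hypothesis on $f$ is used. I would filter $(L\otimes A)^\omega$ and $(L'\otimes A)^{f_1(\omega)}$ by the decreasing filtrations $F^p=L\otimes\mathfrak{m}_A^p$ and $F^p=L'\otimes\mathfrak{m}_A^p$. Since $\omega\in L^1\otimes\mathfrak{m}_A$, every term with $i\ge 1$ in the twisted differentials $l_1^\omega$, $(l')_1^{f_1(\omega)}$ and in $f^\omega_1$ strictly raises $\mathfrak{m}_A$-adic order; hence all three maps are filtered, and the associated graded maps are $l_1\otimes\id$, $l'_1\otimes\id$, and $f_1\otimes\id$. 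As $\mathfrak{m}_A^p/\mathfrak{m}_A^{p+1}$ is a finite-dimensional $\bK$-vector space and $f_1$ is a quasi-isomorphism, $\gr^p(f^\omega_1)=f_1\otimes\id_{\mathfrak{m}_A^p/\mathfrak{m}_A^{p+1}}$ is a quasi-isomorphism for every $p$. The filtration is finite (because $\mathfrak{m}_A$ is nilpotent) and exhaustive, so the comparison theorem for the spectral sequences of filtered complexes --- or, more elementarily, a downward induction on $p$ using the five lemma applied to the long exact cohomology sequences of $0\to F^{p+1}\to F^p\to \gr^p\to 0$ --- shows that $f^\omega_1$ is itself a quasi-isomorphism. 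Therefore $f^\omega$ is a weak equivalence of $\Linf$ algebras, which is the assertion.

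The main obstacle I anticipate is entirely in the first step: being careful with the Koszul signs in Definition \ref{inf-morphism} when expanding the identities for $f^\omega$, or, in the coalgebra formulation, making the conilpotent-coalgebra bookkeeping precise (this is where \cite{Laz} and \cite{Yal16} do the work). Once that is in hand, the filtration argument of the second step is routine, since the $\mathfrak{m}_A$-adic filtration is manifestly compatible with every structure map involved.
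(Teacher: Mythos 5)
The paper offers no proof of Proposition \ref{ya16}: it is quoted from \cite{Laz}*{\S 6} and \cite{Yal16}*{Proposition 3.8}, so there is no internal argument to compare yours against. Your proposal is, in substance, the standard proof from those sources, and it is correct: the explicit formula $f^\omega_n=\sum_{i\ge 0}\tfrac{1}{i!}f_{n+i}(\omega^{\otimes i},-)$, the verification of the morphism identities by conjugating coderivations on the bar construction, and the $\mathfrak{m}_A$-adic filtration argument (finite and exhaustive because $\mathfrak{m}_A$ is nilpotent, with the associated graded of every twisted structure map reducing to the untwisted one since $\omega$ has coefficients in $\mathfrak{m}_A$) is exactly how the cited references proceed. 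Two small caveats. First, the conjugation identity you invoke reads $F\circ e^{\omega}=e^{f_*(\omega)}\circ F^{\omega}$ with $f_*(\omega)=\sum_{n\ge 1}\tfrac{1}{n!}f_n(\omega^{\otimes n})$, so your coalgebra argument naturally lands in the twist of $L'\otimes A$ by the full pushforward $f_*(\omega)$ rather than by $f_1(\omega)$; the two agree only when the higher $f_n(\omega^{\otimes n})$ vanish. This imprecision is already present in the paper's statement (and in the remark following Proposition \ref{twisted L_infty is L_infty}) and is harmless in the applications, but your write-up silently identifies the two. Second, writing $\gr^p(f^\omega_1)=f_1\otimes\id_{\mathfrak{m}_A^p/\mathfrak{m}_A^{p+1}}$ implicitly assumes that $f$ is $A$-multilinear and induced from a morphism defined over $\bK$ (or at least that the reduction of $f_1$ modulo $\mathfrak{m}_A$ is itself a quasi-isomorphism); this holds in every use the paper makes of the proposition, so the argument is complete for the intended purpose.
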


As a consequence, one has:

\begin{lemma} \cite{Laz}*{\S 6}
\label{twisted qiso} Let $L$ be an $\Linf$ algebra and  $A\in \art$. Then two homotopy equivalent Maurer-Cartan elements $\omega_1 \sim \omega_2$ in $MC_L(A)$ give rise to a weak equivalence of twisted $L_\infty$ algebras
$
(L\otimes A)^{\omega_1} \xrightarrow{\sim} (L\otimes A)^{\omega_2}.
$
\end{lemma}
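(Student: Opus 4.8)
The plan is to produce the weak equivalence by interpolating between $\omega_1$ and $\omega_2$ over the polynomial path algebra and then specializing, exactly as one does for the invariance of deformation functors. Since $\omega_1 \sim \omega_2$, by Definition \ref{MChomotopy} there is a Maurer--Cartan element $z(t,dt) \in \MC_L(A[t,dt])$ with $z(0,0)=\omega_1$ and $z(1,0)=\omega_2$. Regard $L \otimes \mathfrak{m}_A[t,dt]$ as an $\Linf$ algebra (the tensor of an $\Linf$ algebra with a cdga, as in \cite{Ge}*{\S 4}), and form its twist by $z(t,dt)$ using Proposition \ref{twisted L_infty is L_infty}; call it $(L\otimes A[t,dt])^{z}$. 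The two evaluation morphisms of cdga's $\bK[t,dt]\ra\bK$ at $t=0$ and $t=1$ induce morphisms of $\Linf$ algebras $\mathrm{ev}_0,\mathrm{ev}_1: (L\otimes A[t,dt])^{z} \ra (L\otimes A)^{\omega_1}$ and $(L\otimes A)^{\omega_2}$ respectively (twisting is functorial for $\Linf$ morphisms, by the Remark following Proposition \ref{twisted L_infty is L_infty}, since $\mathrm{ev}_c$ sends $z$ to $\omega_1$ or $\omega_2$).

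The key point is then that both evaluation maps are weak equivalences, i.e. quasi-isomorphisms on the linear part. The linear part of $(L\otimes A[t,dt])^z$ has underlying complex $L\otimes\mathfrak{m}_A[t,dt]$ with twisted differential $l_1^z$; since $\bK[t,dt]$ is acyclic with cohomology $\bK$ concentrated in degree $0$ (via either evaluation), and tensoring with the acyclic resolution $\bK\hookrightarrow\bK[t,dt]$ preserves quasi-isomorphisms, the map $L\otimes\mathfrak{m}_A \xra{\sim} L\otimes\mathfrak{m}_A[t,dt]$ is a quasi-isomorphism; the subtlety is that here the differentials are the $\omega$-twisted ones rather than $l_1$. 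To handle this one uses that over an Artinian local ring the twisted differential $l_1^\omega = l_1 + (\text{terms in }\mathfrak{m}_A)$ differs from $l_1$ by a nilpotent perturbation, so the natural filtration of $\mathfrak{m}_A$ by powers induces a spectral sequence (or a finite induction up the filtration) whose associated graded is the untwisted statement; hence $\mathrm{ev}_0$ and $\mathrm{ev}_1$ are quasi-isomorphisms on linear parts. Therefore $\mathrm{ev}_0$ and $\mathrm{ev}_1$ are weak equivalences of $\Linf$ algebras.

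Finally, a weak equivalence of $\Linf$ algebras admits a weak-equivalence inverse (an $\Linf$ quasi-isomorphism can be inverted up to homotopy; this is standard, e.g.\ via the homotopy transfer Theorem \ref{minimal model of Linf} applied to a chosen contraction of the linear part, or by \cite{LodVall12}). Composing an inverse of $\mathrm{ev}_0$ with $\mathrm{ev}_1$ produces a weak equivalence of $\Linf$ algebras
$$
(L\otimes A)^{\omega_1} \xra{\sim} (L\otimes A)^{\omega_2},
$$
as claimed.

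I expect the main obstacle to be the middle paragraph: verifying that the twisted (rather than untwisted) evaluation maps are still quasi-isomorphisms. The clean way around it is the nilpotence-of-$\mathfrak{m}_A$ filtration argument, which reduces everything to the classical fact that $\mathrm{ev}_c:\bK[t,dt]\ra\bK$ is a quasi-isomorphism; alternatively one can cite Proposition \ref{ya16} (stability of twisted weak equivalences under base change of the Artinian ring) applied to the cdga $A[t,dt]$ together with its two augmentations to $A$. Everything else — functoriality of twisting, invertibility of $\Linf$ weak equivalences — is formal and already available in the references cited in the excerpt.
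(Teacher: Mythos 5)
Your argument is correct and is essentially the derivation the paper has in mind: the paper states this lemma without its own proof, citing Lazarev (\S 6) and presenting it as a consequence of Proposition \ref{ya16}, and your path-object argument (twist $L\otimes A[t,dt]$ by the homotopy $z$, check that the two evaluation maps are weak equivalences, then invert one and compose) is exactly how that consequence is made rigorous. The one step that genuinely needs care --- that the \emph{twisted} evaluation maps remain quasi-isomorphisms --- is handled correctly by your finite $\mathfrak{m}_A$-adic filtration reducing to the acyclicity of $\bK[t,dt]\to\bK$.
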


\begin{cor}
\label{MC-Aomoto}
Let $L$ be an $\Linf$ algebra, $M$ an $\Linf$ $L$-module, and  $A\in \art$. Then two homotopy equivalent Maurer-Cartan elements $\omega_1 \sim \omega_2$ in $MC_L(A)$ give rise to a quasi-isomorphism of twisted complexes $(M\otimes A, d_{\omega_1}) \xra{\sim} (M\otimes A, d_{\omega_2})$.
\end{cor}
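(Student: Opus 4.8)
The plan is to deduce Corollary \ref{MC-Aomoto} by combining Lemma \ref{twisted qiso} with Proposition \ref{propMTw}. First I would recall that, by Lemma \ref{twisted qiso}, homotopy equivalent Maurer--Cartan elements $\omega_1\sim\omega_2$ in $\MC_L(A)$ produce a weak equivalence of twisted $\Linf$ algebras $(L\otimes A)^{\omega_1}\xra{\sim}(L\otimes A)^{\omega_2}$. The point, however, is that we want a statement about the twisted \emph{modules}, not just the twisted algebras. To bridge this gap, I would pass to the algebra $L\oplus M$ and to the Maurer--Cartan element $(\omega,0)$, which by Lemma \ref{twisted L oplus M} is indeed a Maurer--Cartan element of $(L\oplus M)\otimes A$. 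Since $\omega_1\sim\omega_2$ in $\MC_L(A)$, the homotopy $z(t,dt)\in\MC_L(A[t,dt])$ yields $(z(t,dt),0)\in\MC_{L\oplus M}(A[t,dt])$ connecting $(\omega_1,0)$ and $(\omega_2,0)$, so $(\omega_1,0)\sim(\omega_2,0)$ in $\MC_{L\oplus M}(A)$.

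Next I would apply Lemma \ref{twisted qiso} to the $\Linf$ algebra $L\oplus M$ and the homotopy equivalent elements $(\omega_1,0)\sim(\omega_2,0)$, obtaining a weak equivalence of twisted $\Linf$ algebras
$$
\big((L\oplus M)\otimes A\big)^{(\omega_1,0)}\xra{\ \sim\ }\big((L\oplus M)\otimes A\big)^{(\omega_2,0)}.
$$
By Proposition \ref{propMTw}, each side is identified with $(L\otimes A)^{\omega_j}\oplus (M\otimes A)^{\omega_j}$. A weak equivalence of $\Linf$ algebras is, by Definition \ref{defWE}, a quasi-isomorphism on the degree-one components $j_1^{(\omega_1,0)}\to j_1^{(\omega_2,0)}$; since these differentials respect the direct sum decomposition (the $L$-part maps to the $L$-part and the $M$-part to the $M$-part, as in the computation in Lemma \ref{aomoto definedness}), restricting to the $M\otimes A$ summand gives a quasi-isomorphism of complexes. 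By Lemma \ref{aomoto definedness}, the restriction of $j_1^{(\omega_j,0)}$ to $M\otimes A$ is exactly $d_{\omega_j}$. Hence we obtain the desired quasi-isomorphism $(M\otimes A, d_{\omega_1})\xra{\sim}(M\otimes A, d_{\omega_2})$.

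The main obstacle I anticipate is verifying that the weak equivalence furnished by Lemma \ref{twisted qiso} for $L\oplus M$ genuinely restricts to a \emph{morphism of complexes} on the $M\otimes A$ summand, i.e.\ that its linear component is block-triangular (or block-diagonal) with respect to the decomposition $L\oplus M$. This should follow from the structure of the transfer/homotopy formulas: the module summand $M$ is an ideal in $L\oplus M$ in the appropriate graded sense (all structure maps $j_n$ output an element of $M$ as soon as one input lies in $M$, and output an element of $L$ only when all inputs lie in $L$), so every operation built from the $j_n$ and a homotopy preserves this filtration, and in particular the linear part of the induced $\Linf$ morphism sends $M\otimes A$ into $M\otimes A$. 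Making this precise is the one place where a small argument is needed; everything else is a direct assembly of the already-established lemmas.
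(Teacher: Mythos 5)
Your proposal follows essentially the same route as the paper's own proof: pass to the $\Linf$ algebra $L\oplus M$, observe that $(\omega_1,0)\sim(\omega_2,0)$, apply Lemma \ref{twisted qiso} to get a weak equivalence of twisted algebras, identify the two sides via Proposition \ref{propMTw}, and restrict to the $M\otimes A$ summand using Lemma \ref{aomoto definedness}. The subtlety you flag at the end --- that the linear component of the weak equivalence must respect the summand $M\otimes A$ --- is real but is passed over just as quickly in the paper, which simply invokes the last part of Proposition \ref{propMTw} at that step.
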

\begin{proof} 
We consider as before the $L_\infty$ algebras $L\oplus M$ and $(L\oplus M)\otimes A=(L\otimes A)\oplus (M\otimes A)$ corresponding to $M$ and, respectively, $M\otimes A$. Then, since $\omega_1 \sim \omega_2$, we have $(\omega_1, 0)\sim(\omega_2, 0)$ in $MC_{L\oplus M}(A)$. We apply Lemma \ref{twisted qiso} to get a weak equivalence of twisted $L_\infty$-algebras
$$
\hat\varphi: ((L\otimes A)\oplus (M\otimes A))^{(\omega_1, 0)} \xrightarrow{\sim} ((L\otimes A)\oplus (M\otimes A))^{(\omega_2, 0)}.
$$
That is, $\hat{\varphi}_1$ is a quasi-isomorphism. Using  the last part of Proposition \ref{propMTw}, the restriction of $\hat{\varphi}_1$  to $(M\otimes A)^\omega$ must also be a quasi-isomorphism of complexes. By the proof of Lemma \ref{aomoto definedness}, this is a quasi-isomorphism between the desired twisted complexes.
\end{proof}

\begin{proof}[Proof of Theorem \ref{thrmWell}] Let $\omega\in MC_L(A)$. Then $(M\otimes A,d_\omega)$ is a complex of $A$-modules by Lemma \ref{aomoto definedness}. It is bounded above, since $M$ is  by our convention in this article. 

Moreover, the complex $(M\otimes A,d_\omega)$ has finitely generated cohomology over $A$. The proof of this claim is exactly the same as for the case of dgl pairs \cite{BW}*{Lemma 3.9}. Namely, $(M\otimes A,d_\omega)$ has a finite decreasing filtration of subcomplexes $(M\otimes \mathfrak{m}_A^s,d_\omega)$. The associated spectral sequence $E_1^{s,t}=H^{s+t}(M\otimes \mathfrak{m}_A^s/\mathfrak{m}_A^{s+1},d_\omega)$ degenerates after finitely many pages and abutes to $H^{s+t}(M\otimes A,d_\omega)$. Since $\omega\in L^1\otimes\mathfrak{m}_A$, one has that on $M\otimes \mathfrak{m}_A^s/\mathfrak{m}_A^{s+1}$, $d_\omega=m_1\otimes \id_A$, where $m_1$ is the differential on $M$. Since $(M,m_1)$ has finitely generated cohomology, the same is true for the terms in the spectral sequence, and hence for  $(M\otimes A,d_\omega)$ as well.

We have shown thus that all the conditions from Definition \ref{defCJI} are met. Hence there are well-defined cohomology jump ideals $J^i_k(M\otimes A,d_\omega)$.

Now let $\omega'\in MC_L(A)$ be homotopy equivalent to $\omega$. By Corollary \ref{MC-Aomoto}, the complexes $(M\otimes A,d_\omega)$ and $(M\otimes A,d_{\omega'})$ are quasi-isomorphic. This implies by \cite{BW}*{Corollary 2.3} that they have the same cohomology jump ideals. Thus taking the equivalence relation by homotopy equivalence in the definition of $\Def ^i_k(L,M)$ is  a well-defined operation.

To prove that $\Def^i_k(L,M)\ra \Def(L)$ is a subfunctor, now we only need  to check that a (local) morphism $f:A\ra A'$ in $\texttt{Art}$ leads to a map of sets $$\Def^i_k(L,M;f):\Def^i_k(L,M;A)\ra\Def^i_k(L,M;A')$$ for all $i$ and $k$, induced by restrictions from the map $$\Def(L;f):\Def(L;A)\ra\Def(L;A'),$$ and compatible with compositions.  Let $\omega \in MC_L(A)$ with $J^i_k(M\otimes A,d_\omega)=0$. Then the image $\omega'$ of $\omega$ under the $\Linf$ algebras morphism $L\otimes A\ra L\otimes A'$ is in $MC_{L}(A')$, this being part of the proof that $\Def(L)$ is a functor.
Moreover, from the definition, there is an equality of  twisted complexes
$$
(M\otimes A', d_{\omega'}) = (M\otimes A, d_\omega)\otimes_A A'.
$$
By \cite{BW}*{Corollary 2.4} there is an equality of ideals of $A'$
$$
J^i_k((M\otimes A, d_\omega)\otimes_A{A'}) = J^i_k(M\otimes A, d_\omega)\cdot A'.
$$
Thus
$
J^i_k(M\otimes A',d_{\omega'}) = 0. 
$
Hence the map $\Def^i_k(L,M;f)$ is well-defined. The compatibility of $\Def^i_k(L,M;f)$ with compositions of morphisms in $\texttt{Art}$ follows from the fact that $\Def(L)$ is a functor.
\end{proof}

\subsection{The proof of Theorem \ref{cjdf pairs invariance}} Similar to the proof above, for any morphism of $\Linf$ pairs $(f,g):(L,M)\ra (L',M')$, one obtains commutative diagram of  natural transformations of functors
$$
\xymatrix{
\Def^i_k(L,M) \ar[d]^{(f,g)_*} \ar@{^{(}->}[r] & \Def(L)  \ar[d]^{f_*}\\
 \Def^i_k(L',M') \ar@{^{(}->}[r] & \Def(L')
}
$$
Since $f$ is a weak equivalence, $f_*$ is an isomorphism of functors. To show that $(f,g)_*$ is also an isomorphism of functors, it is enough to show that the twisted complexes $(M\otimes A, d_\omega)$ and $(M'\otimes A, d_{\omega '})$ are quasi-isomorphic for any  $A\in\texttt{Art}$, $\omega\in MC_L(A)$, and $\omega'=f_1(\omega)$ in $MC_{L'}(A)$, by \cite{BW}*{Corollaries 2.3 and 2.4}.

By Proposition \ref{formal pair iff formal oplus}, $(f,g)$ induces a weak equivalence of $\Linf$ algebras $f\oplus g:L\oplus M\ra L'\oplus M'$. By Lemma \ref{twisted L oplus M}, $(\omega,0)$ is in $MC_{L\oplus M}(A)$, and $(\omega',0)$ is in $MC_{L'\oplus M'}(A)$. Moreover, $(\omega',0)=(f\oplus g)_1(\omega,0)=(f_1(\omega),0)$. By Proposition \ref{ya16}, this gives a weak equivalence of $\Linf$ algebras $((L\oplus M)\otimes A)^{(\omega,0)}\ra ((L'\oplus M')\otimes A)^{(\omega',0)}$. By Proposition \ref{propMTw}, this is the same as a weak equivalence
$$
(L\otimes A)^{\omega}\oplus (M\otimes A)^{\omega}\ra (L'\otimes A)^{\omega'}\oplus (M'\otimes A)^{\omega'}.
$$
Hence this defines a weak equivalence of twisted $\Linf$ pairs 
$$
((L\otimes A)^{\omega}, (M\otimes A)^{\omega})\ra ((L'\otimes A)^{\omega'}, (M'\otimes A)^{\omega'})
$$
by Proposition \ref{formal pair iff formal oplus} again. In particular, by the definition of weak equivalence, the first component of the $\Linf$ modules map
$$
(M\otimes A)^{\omega}\ra (M'\otimes A)^{\omega'}.
$$
is a quasi-isomorphism of complexes. We have seen already in the proof of Lemma that first components are precisely the twisted complexes we want, so we get a quasi-isomorphism
$$
(M\otimes A,d_\omega)\ra (M'\otimes A,d_{\omega'}).
$$\hfill\(\Box\)

\subsection{Proof of Theorem \ref{thmMainLDef}}
This follows immediately from Theorem \ref{cjdf pairs invariance} together with Theorem \ref{thmTTP}.

\subsection{Proof of Theorem \ref{propTDef}}\label{subTg} 
Note that $T\Def(L)=T\Def(HL)$ and $T\Def^i_k(L,M)={T}\Def^i_k(HL,HM)$, where the cohomology pair is endowed with an $\Linf$ pair structure making it $\Linf$ weakly equivalent to $(L,M)$. The $\Linf$ pair structure $\{l_n,m_n\}_{n\ge 1}$ on $(HL,HM)$ satisfies $l_1=0$ and $m_1=0$.

It is well-known that $T\Def(L)=T\Def(HL)=H^1L$. We recall the proof. By definition, $T\Def(HL)=\Def(HL; A)$, where $A=\bK[\epsilon]/\epsilon^2$. This is the set of Maurer-Cartan elements in $HL\otimes \epsilon A$ modulo homotopy equivalence. The Maurer-Cartan condition $\sum_{n\ge 1}l_n^A(\omega^n)/n!=0$ is satisfied by all $\omega\in H^1L\otimes \epsilon A$. 

Now let $z(t,dt)\in MC_{HL}(A[t,dt])$. Then $z=z'+z''$ where $z'\in H^1L\otimes \epsilon A[t]$, $z''\in H^0L\otimes \epsilon A[t]dt$, and the Maurer-Cartan condition simplifies to  $(id_L\otimes d/dt)z'=0$. Hence $z'$ must be constant in $t$ and $z''$ is anything. In particular, if $z$ is a homotopy between $\omega_1$ and $\omega_2$, with $\omega_1,\omega_2\in MC_{L}(A)=H^1L\otimes \epsilon A$, then $\omega_1=\omega_2$. Thus, there is no homotopy equivalence to mod out by, and $T\Def(L)=H^1L\otimes \epsilon A\simeq H^1L$.

Now $T\Def^i_k(HL,HM)$ is $\Def^i_k(HL,HM; A)$, with $A$ as above. Since there is no homotopy equivalence to mod out by, this is the set of all $\omega\in H^1L\otimes \epsilon A$ such that the ideal $J^i_k(HM\otimes A,d_\omega)=0$, where $d_\omega(\_)=\sum_{n\ge 0}m_{n+1}^A(\omega^n\otimes \_)/n!$. Again, $m_1=0$ and multiplying more than one $\omega$ is zero. So, $d_\omega(\_)=m_2^A(\omega\otimes \_)$. By definition, $J^i_k(HM\otimes A,m_2^A(\omega\otimes \_))$ is the ideal in $A$ generated by the minors of size $\dim H^iM -k+1$  of 
\be\label{eqm2}
m_2^A(\omega\otimes \_):(H^{i-1}M\oplus H^iM)\otimes A\ra (H^{i}M\oplus H^{i+1}M)\otimes A.
\ee
Since $\omega=\omega'\otimes \epsilon$ for some $\omega'\in H^1L$, $m_2^A(\omega\otimes \_)=m_2(\omega,\_)\otimes \epsilon$. Thus multiplying any two entries of this matrix is zero. Hence, if $\dim H^iM -k+1>1$, all elements of $\Def^i_k(HL,HM; A)=H^1L\otimes \epsilon A= T\Def(HL)$. On the other hand, if $\dim H^1M-k+1=1$, we see that the condition $J^i_k(HM\otimes A,d_\omega)=0$ is the condition that the linear map $m_2(\omega'\otimes\_)$ is zero. That is equivalent to $\omega'$ being in $\ker \tau_i$, with $$
\tau_i:H^1L\ra \bigoplus_{j=i-1,i}\Hom(H^{j}M,H^{j+1}M)
$$ as in the statement of the theorem. \hfill $\Box$

\begin{rmk}
If $(C,M)$ is a dgl pair, then there is a canonical morphism of dgla's $\chi:C\ra \End (M)$. Manetti \cite{M-a} defined in this case a deformation functor $\Def_\chi$ associated to $\chi$. One can show that $\Def_\chi=\Def^i_{h^i}(C,M)$, where $h_i=\dim H^iM$. The Zariski tangent space $\Def_\chi$ as computed in \cite{M-a} agrees with $T\Def^i_{h_i}(C,M)$ as we have computed it in Theorem \ref{propTDef}.
\end{rmk}

\subsection{Proof of Theorem \ref{thrmGTC}}  Firstly, note that by \cite{BW}*{Corollary 2.5}, $R^i_k(A)$ is defined scheme-theoretically by the ideal $J^i_k(A\otimes \cO,d\otimes id_\cO+\omega_{univ})$, where $\cO$ is the affine coordinate ring of $H^1$, and $\omega_{univ}=\sum_je_j\otimes x_j\in H^1\otimes \cO$ is a universal element, that is $e_j$ form a basis of $H^1$ and $x_j$ form the dual basis. The tangent cone at $0$ of $R^i_k(A)$ is an analytic invariant of the germ at $0$ of $R^i_k(A)$. This germ pro-represents $\Def^i_k(A,A)$ up to gauge equivalence, where $(A,A)$ is the associated dgl pair, by the definition of $\Def^i_k(A,A)$. Note that the dgla structure on $A$ is trivial and $A$ is connected, so the there is no gauge equivalence to mod out by. The dgl module structure of $A$ over itself is equivalent to the cdga structure. By Theorem \ref{thmMainLDef}, we can replace $\Def^i_k(A,A)$ by $\Def^i_k(H,H)_\infty$, where $(H,H)_\infty$ is the associated $\Linf$ pair structure on the cohomology pair. By the connectedness assumption and a similar argument as in \ref{subTg}, there is no homotopy equivalence to mod out by. Hence $\Def^i_k(H,H)_\infty$ is given by cohomology jump ideals $J^i_k(H\otimes \cO,d_{univ})$ in the completion $\widehat{\cO}$ at the origin of $\cO$, where $$d_{univ}(\_)=\sum_{n\ge 0} \frac{1}{n!}(\mu_{n+1}\otimes id_\cO)(\omega_{univ}^{\otimes n}\otimes \_\,),$$  and $\mu_n$ are the $\Linf$ module multiplication maps of $H$ on itself. Thus the tangent cone at $0$ of $R^i_k(A)$ is given by the ideal $I^i_k$ generated by the initial forms of elements of $J^i_k(H\otimes \cO,d_{univ})$. By definition, the generators of $J^i_k(H\otimes \cO,d_{univ})$ are the  minors of size $s^i_k=\dim H^i -k+1$ of the matrix of formal power series defined by $d_{univ}^{i-1}\oplus d_{univ}^{i}$. The summands of the entries corresponding to $(\mu_{n+1}\otimes id_\cO)(\omega_{univ}^{\otimes n}\otimes \_\,)$ are homogeneous of degree $n$. If we linearize all the entries of $d_{univ}$, that is, if we replace $d_{univ}(\_)$ by $\mu_{2,univ}(\_):=(\mu_2\otimes id_\cO)(\omega_{univ}\otimes \_\,)$, each minor of size  $s^i_k$ is an initial form (of degree $s^i_k$) of a generator of $J^i_k(H\otimes \cO,d_{univ})$. Hence $J^i_k(H\otimes \cO,\mu_{2,univ})$ is contained in $I^i_k$. However, $R^i_k(H)$ is scheme-theoretically defined by $J^i_k(H\otimes \cO,\mu_{2,univ})$, by \cite{BW}*{Corollary 2.5}. Hence $R^i_k(H)$ contains the tangent cone at $0$ of $R^i_k(A)$.\hfill$\Box$

\section{Cohomology jump loci of rank one local systems}\label{sec4}

In this section we apply the theory of $\Linf$ pairs  to local systems of rank one and prove Theorems \ref{corW}, \ref{thrmLink}, \ref{thrmMFiber}, and \ref{thrmLoc}. Throughout this section the field $\bK$ is $\bC$.

\subsection{} Let $X$ be a connected topological space which has the homotopy type of a finite CW-complex. We consider as in the introduction the complex affine torus $\mb (X)=\Hom (\pi_1(X),\bC^*)$, the moduli space of rank one $\bC$-local systems on $X$, and the cohomology jump loci
$$
\Sigma^i_k(X)=\{\sL\in \mb (X)\mid \dim_\bC H^i(X,\sL)\ge k\}.
$$

Let $\Omega_{{\rm DR}}(X)$ be Sullivan's cdga of piecewise smooth $\bC$-forms on  $X$.  $\Omega_{{\rm DR}}(X)$ can be replaced by the de Rham complex of smooth $\bC$-forms on $X$ if $X$ is a  manifold. For a local system $\sL$, denote by $\Omega_{{\rm DR}}(\sL)$ the module over $\Omega_{{\rm DR}}(X)$  obtained from the forms with values in  $\sL$. 

By Kadeishvili's theorem, there is an $A_\infty$ algebra structure on $H(X,\bC)$ making it $A_\infty$ quasi-isomorphic to $\Omega_{{\rm DR}}(X)$. As we have seen, this extends to modules and passes to  $L_\infty$ pairs. That is, for a local system $\sL$ there is an $L_\infty$ pair structure on $$(H(X,\bC),H(X,\sL))$$ making it weakly $L_\infty$ equivalent to the dgl pair \[(\Omega_{{\rm DR}}(X),\Omega_{{\rm DR}}(\sL)).\]
Since $\Omega_{{\rm DR}}(X)$ is a cdga, itself as a dgla has zero Lie bracket. Hence the $\Linf$ algebra structure on $H(X,\bC)$ is trivial, that is, all products are zero, by Corollary \ref{minimal model of dgla}. The dgl module structure of $\Omega_{{\rm DR}}(X)$ over itself as a dgla, is however nontrivial. Indeed, the module products are the associative products, that is, the wedging of forms in this case. Thus, the only non-trivial part of the $\Linf$ pair structure on $(H(X,\bC),H(X,\sL))$ is the $\Linf$ module multiplication map $m=(m_n)_{n\ge 2}$. We will call this multiplication map $m=(m_n)_{n\ge 2}$ a {\it canonical $\Linf$ module multiplication map}. It depends only on the choices $(f,g,h)$ from homotopy transfer diagrams, as in the previous section. If $\sL$ is the constant sheaf, one has that a canonical $\Linf$ module multiplication map of $H(X,\bC)$ over itself is the same data as an $A_\infty$ algebra structure on $H(X,\bC)$, making it $A_\infty$ equivalent to $\Omega_{{\rm DR}}(X)$. Hence, part (1) of Theorem \ref{thrmLoc} follows from part (2), which we now prove:

\subsection{Proof of Theorem \ref{thrmLoc}} We focus on (2) since it implies (1). By \cite{BW}, the formal germ of $\Sigma^i_k(X)$ at $\sL$  pro-represents the cohomology jump deformation functor $\Def^i_k(\Omega_{{\rm DR}}(X),\Omega_{{\rm DR}}(\sL))$ of the dgl pair. Hence, by Theorem \ref{thmMainLDef}, the formal germ of $\Sigma^i_k(X)$ at $\sL$  pro-represents the cohomology jump deformation functor $$\Def^i_k(H(X,\bC),H(X,\sL))$$ of the $\Linf$ pair $(H(X,\bC),H(X,\sL))$. 

Note first that the deformation functor $\Def(H(X,\bC))$ of the $\Linf$ algebra $H(X,\bC)$ is pro-represented by the formal germ $H^1(X,\bC)_{(\mathbf{0})}$ at the origin of the affine space $H^1(X,\bC)$. This is nothing new, and it can be seen directly from the fact that $H^1(X,\bC)$ is the Lie algebra of the algebraic group $\mb(X)$, whose connected component through the identity is $(\bC^*)^b$, with $b=\dim H^1(X,\bC)$. However, we recall here the proof via $\Linf$ algebras.  Indeed, for $(A,\mathfrak{m}_A)\in \tart$, 
$\Def(H(X,\bC);A)$ is the set  of Maurer-Cartan elements 
$$MC_{H(X,\bC)}(A)=\left\{ \omega \in H^1(X,\bC)\otimes\mathfrak{m}_A \hspace{0.1cm}\middle|\hspace{0.1cm} \sum_{n\ge 2} \frac{1}{n!}\hspace{0.1cm}\mu_n^A (\omega^{\otimes n}) = 0 \right\}$$
modulo homotopy equivalence. Here $\mu_n^A=\mu_n\otimes \id_A$, and $\mu=(\mu_n)_n$ is the $\Linf$ algebra structure  on $H(X,\bC)$. As mentioned above $\mu_n=0$ for all $n$, since $\Omega_{{\rm DR}}(X)$ is a cdga. So
$$
MC_{H(X,\bC)}(A)=H^1(X,\bC)\otimes\mathfrak{m}_A.
$$
Moreover, in this set no two elements are homotopy equivalent. Indeed, for an element $z=(z',z'')$ in $(H^1(X)\otimes \mathfrak{m}_A[t])\oplus (H^0(X)\otimes \mathfrak[t]dt)$, the Maurer-Cartan condition simplifies to $(d/dt)(z')=0$. Hence $z'$ must be constant in $t$. Thus making $dt=0$ and $t=0,1$ in $z$, one obtains the same element in $H^1(X)\otimes \mathfrak{m}_A$.
Hence $\Def(H(X,\bC))=H^1(X,\bC)_{(\mathbf{0})}$ as functors.

Next consider the {\it $\Linf$ resonance} subset of $H^1(X,\bC)$ defined as
$$
\mathcal{R}^i_k (X,\sL) := \left\{ \omega \in H^1(X,\bC) \hspace{0.1cm}\middle|\hspace{0.1cm} \dim H^i(H^\ubul(X,\sL), d_\omega) \geq k\right\}, 
$$
where 
$$
d_\omega (\eta) :=  \sum_{n\geq 0}\frac{1}{n!} m_{n+1}(\omega^{\otimes n}\otimes\eta).
$$
Note that by assumption, $d_\omega(\eta)$ is a finite sum of at most $n_0$ non-zero summands. Since for all $\omega\in H^1(X,\bC)$ one has that $\mu_n(\omega^{\otimes n})=0$ for all $n>0$, $(H^\ubul(X,\sL), d_\omega)$ must be a complex. Indeed, this is classically known, see for example the full version of Proposition \ref{twisted L_infty is L_infty} in \cite{ChuLaz11} and \cite{Yal16}; or one can derive it directly from the compatibility of the $\mu_n$ with the $m_n$.  

One  endows $\mathcal{R}^i_k (X,\sL)$ with a natural  structure of subscheme of $H^1(X,\bC)$ as follows. Let $R$ be the affine coordinate ring of $H^1(X,\bC)$. If $e_1,\ldots ,e_b$ is a basis for the vector space $H^1(X,\bC)$, then  $R=\bC[x_1,\ldots, x_b]$ where $x_1,\ldots,x_b$ is the dual basis in the dual vector space $H^1(X,\bC)^\vee$. Define the universal element
$$
\omega^{univ}:=\sum_{1\le j\le b}e_i\otimes x_i \in H^1(X,\bC)\otimes R.
$$
For $\eta\in H^\ubul(X,\sL)\otimes R$, define an $R$-linear map
$$
d_{univ}(\eta):=\sum_{n\geq 0}\frac{1}{n!} (m_{n+1}\otimes\id_R)(\omega_{univ} ^{\otimes n},\eta).
$$
Again this is a finite sum with at most $n_0$ non-zero summands. Thus $$(H^\ubul(X,\sL)\otimes R, d_{univ})$$ is a well-defined complex interpolating all complexes $(H^\ubul(X,\sL), d_\omega)$. By construction,  $\mathcal{R}^i_k (X,\sL)$ is the zero locus in $\spec (R)$ of the cohomology jump ideal  $J^i_k((H^\ubul(X,\sL)\otimes R, d_{univ}))\in R$ of the universal twisted complex. This defines the scheme structure on $\mathcal{R}^i_k (X,\sL)$.

Moreover, from the definition of the  cohomology jump subfunctors  and the fact that there is no homotopy equivalence to mod out by, one has that the formal germ at the origin of $\mathcal{R}^i_k (X,\sL)$ pro-represents $\Def^i_k(H(X,\bC),H(X,\sL))$.

To summarize, we have obtained the following commutative diagram with vertical arrows isomorphisms of formal germs:
$$
\xymatrix{
\mathcal{R}^i_k (X,\sL)_{(\mathbf{0})} \ar@{^{(}->}[r]  \ar[d]^*[@]{\sim}& H^1(X,\bC)_{(\mathbf{0})} \ar[d]^*[@]{\sim} \\
\Sigma^i_k(X)_{(\sL)} \ar@{^{(}->}[r] & \mb(X)_{(\sL)}.
}
$$
Moreover, the right-most isomorphism is induced by the exponential map 
$$\exp_\sL:T_\sL\mb(X)\ra \mb(X)$$ from the tangent space at $\sL$ of the algebraic group $\mb(X)$, which in its turn is induced by translation from the usual exponential map
$$\exp: \bC^b=H^1(X,\bC)\ra (\bC^*)^b$$
for the connected component $(\bC^*)^b$ of $\mb(X)$ containing $\bone$.

From now on, the proof is the same as that of \cite{BW17}*{Theorem 1.3}. Namely, let $V'$ and $W'$ be two irreducible components of $\Sigma^i_k(X)$ and
$\mathcal{R}^i_k (X,\sL)$, respectively, passing through $\sL$ and $0$, respectively, and such that they correspond to each other under the isomorphism from the above diagram. By translation we obtain two subvarieties $V$ and $W$ of $(\bC^*)^b$ and $\bC^b$, respectively, isomorphic as varieties with $V'$ and $W'$, respectively, and such that the exponential map induces an isomorphism between the germs of $V$ and $W$ at $1$ and $0$, respectively. By  Theorem \ref{propAx}, this implies that $V$ is a subtorus. Hence $V'$ is a translated subtorus in $\mb(X)$. 
$\hfill\Box$

\begin{rmk} 
A statement with a similar conclusion under the assumption that the dgl pair $(\Omega_{{\rm DR}}(X),\Omega_{{\rm DR}}(\sL))$ admits a finite dimensional dgl pair model was proved in \cite{BW17}. Although one would like to say that this finite dimensionality assumption implies the assumption of the previous theorem, this is at the moment not clear to us.
\end{rmk}

\subsection{}
We will use the following result of Cirici-Horel \cite{CH}. As kindly pointed out to us by J. Cirici, this is a particular case of Theorem 8.7 in \cite{CH}, cf. Remark 8.8 from \cite{CH}. This was first proven by Morgan \cite{Mo} in the case of smooth varieties and extended by Cirici-Guill\'en \cite{CG} to possibly singular nilpotent varieties. The approach of \cite{CH} allows to remove the nilpotency conditions in the singular case. 

\begin{theorem}
\label{thrmCH} 
Let $X$ be a  complex algebraic variety, possibly reducible. The cdga $\Omega_{{\rm DR}}(X)$ is quasi-isomorphic to a cdga $A$ such that:

i) for every $n$ there is an extra grading $A^n=\bigoplus_p A_p^n$;

ii) the extra grading on $A$ is  compatible with the differential and with the multiplication on $A$, that is, for all integers $p, n, p', n'$,
$$
d(A_p^n)\subset A_p^{n+1}\quad\text{and}\quad A_p^n\cdot A_{p'}^{n'}\subset A_{p+p'}^{n+n'}.
$$

iii) the filtration $W_pA^n:=\bigoplus_{q\leq p} A^n_q$ induces Deligne's weight filtration on $H^n(A)\cong H^n(X,\bC)$.
\end{theorem}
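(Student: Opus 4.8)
The plan is to reduce the statement to the general machinery of Cirici--Horel, of which it is the special case \cite{CH}*{Theorem 8.7} (see also Remark~8.8 therein); what follows is how that argument is organized. The core input is the existence, functorially in the complex algebraic variety $X$, of a \emph{mixed Hodge cdga model}: a zig-zag of quasi-isomorphisms of cdga's connecting (a real form of) $\Omega_{{\rm DR}}(X)$ to a cdga $A$ equipped with an increasing, multiplicative weight filtration $W$ inducing Deligne's weight filtration on $H^{*}(X,\bC)$. For $X$ smooth this is Morgan's theorem \cite{Mo}: choose a smooth compactification $X\hookrightarrow\overline{X}$ with simple normal crossings boundary $D$, take the logarithmic de Rham complex $\Omega^{*}_{\overline X}(\log D)$ with its weight filtration $W$, and use that the associated weight spectral sequence is multiplicative and degenerates at $E_{2}$. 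For general $X$ --- possibly singular, reducible, and without any nilpotency hypothesis --- one replaces this by a cubical (or (co)simplicial) descent construction along a smooth proper hypercovering whose terms are complements of normal crossings divisors in smooth projective varieties, assembling the logarithmic models of the pieces; the contribution of \cite{CH} is to perform this assembly at the level of \emph{filtered $E_{\infty}$-algebras}, which removes the nilpotency assumption of \cite{CG}, and then to transfer back to cdga's, which is harmless since $\bK$ has characteristic zero.

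The second step is to promote the weight \emph{filtration} $W$ on $A$ to an honest extra \emph{grading} $A^{n}=\bigoplus_{p}A^{n}_{p}$ compatible with $d$ and with the product, which is exactly assertions (i)--(ii). Here Hodge theory enters: each $\gr^{W}_{p}$ of the model carries a pure Hodge structure, and a purity argument --- inductively, via the $dd^{c}$-type splitting available for pure objects, or equivalently by building a bigraded minimal model in Morgan's sense whose second grading is the weight --- produces a multiplicative splitting of $W$ at the cochain level. Over $\bC$ one may phrase this through Deligne's canonical functorial splitting of a mixed Hodge structure, arranged to be compatible with $d$ and the multiplication on a suitable cdga representative. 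With the grading in hand, setting $W_{p}A^{n}:=\bigoplus_{q\le p}A^{n}_{q}$ returns a multiplicative filtration whose induced filtration on cohomology is Deligne's weight filtration, which is (iii).

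The main obstacle, and the reason the statement is nontrivial beyond the smooth case, is to keep the weight filtration \emph{multiplicative} throughout the descent: the totalization of a diagram of multiplicatively filtered cdga's is in general only an $E_{\infty}$-algebra, not a cdga, with multiplicativity of the weights holding only up to coherent homotopy, and it is precisely the rectification of this structure --- without losing control of the weights and without invoking nilpotency --- that \cite{CH} carries out. A secondary and more routine point is to identify the resulting filtration with Deligne's, which follows from functoriality together with the known smooth and smooth-proper cases. Finally, the same scheme applies to the links $\cL'$ and Milnor fibers $F'$ of Theorems~\ref{thrmLink} and~\ref{thrmMFiber}, once one uses that their rational homotopy types carry functorial mixed Hodge structures compatible with a cdga model --- supplied by Durfee--Hain \cite{DH} for links and by Steenbrink \cite{St-F}, Navarro \cite{Na}, and Saito \cite{Sa-F} for Milnor fibers --- so that the analogue of Theorem~\ref{thrmCH} holds in those settings as well; see Remark~\ref{rmkLinkCH}.
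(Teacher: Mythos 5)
Your proposal takes essentially the same route as the paper: the paper does not prove Theorem \ref{thrmCH} independently but simply identifies it as a special case of \cite{CH}*{Theorem 8.7} (cf.\ Remark 8.8 there), noting the lineage through Morgan \cite{Mo} and Cirici--Guill\'en \cite{CG}, exactly as you do. Your additional sketch of the internal mechanism of \cite{CH} (descent along hypercoverings, rectification of the multiplicative weight filtration, splitting it into a grading) goes beyond what the paper records but is consistent with it, as is your remark on links and Milnor fibers, which matches Remark \ref{rmkLinkCH}.
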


\begin{rmk}
\label{rmkLinkCH} 
The only input from geometry needed in \cite{CH} to prove Theorem \ref{thrmCH} is the existence of a multiplicative mixed Hodge diagram, enhancing the mixed Hodge structures on cohomology to the rational homotopy setting. For varieties this is available by \cite{Mo}, \cite{Ha}, \cite{Na}. For links as defined in the introduction, this has been constructed by  \cite{DH}. For Milnor fibers of germs of holomorphic functions $f:(\bC^n,0)\ra (\bC,0)$, this has been constructed by  \cite{Na}. The multiplicative mixed Hodge diagrams needed in \cite{CH} must also be cohomologically connected, since the existence {of} Sullivan's minimal models is used. Hence Theorem \ref{thrmCH} also holds for connected links and connected Milnor fibers. Since the multiplicative mixed Hodge diagrams for varieties, links, and Milnor fibers, split into direct sums of diagrams for each connected component, Theorem \ref{thrmCH} also hold for each connected component separately. We thank J. Cirici for this remark.
\end{rmk}

Applying Corollary \ref{corKgr} to this result, we conclude:

\begin{theorem}
\label{thrmStrict2} Let $X$ be a complex algebraic variety. Then on $H^\ubul(X,\bC)$ there exists an extra grading inducing the weight filtration, and there exists an $A_\infty$ structure $\mu=(0,\mu_2,\mu_3,\ldots)$ in the canonical homotopy class of such structures such that all  multiplication maps $\mu_n$ 
are compatible with the multigrading given by the extra grading, as in (\ref{eqMulti}).
\end{theorem}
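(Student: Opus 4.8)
The plan is to combine the Cirici--Horel result with the graded minimality theorem proved above. First I would apply Theorem \ref{thrmCH} to obtain a cdga $A$, quasi-isomorphic to $\Omega_{{\rm DR}}(X)$, carrying an extra grading $A^n=\bigoplus_p A^n_p$ compatible with both the differential and the multiplication, and such that the associated filtration $W_pA^n=\bigoplus_{q\le p}A^n_q$ induces Deligne's weight filtration on $H^n(A)\cong H^n(X,\bC)$. Note that only the multiplicative structure of $A$ and its compatibility with the extra grading will be used; we do not need $\Omega_{{\rm DR}}(X)$ itself to carry such a grading.

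Second, regarding $A$ merely as a dga with an extra grading on each $A^i$ compatible with $d$ and with the product, I would apply Corollary \ref{corKgr}. This produces an $A_\infty$ algebra structure $(H,0,\nu_2,\nu_3,\ldots)$ on $H:=H(A)$, with the extra grading on each $H^n$ induced from that of $A$, together with an $A_\infty$ quasi-isomorphism $\psi:H\xrightarrow{\sim}A$, such that every $\nu_n$ and $\psi$ is compatible with the multigrading, i.e. $\nu_n:H^{i_1}_{p_1}\otimes\cdots\otimes H^{i_n}_{p_n}\to H^{i_1+\cdots+i_n+2-n}_{p_1+\cdots+p_n}$ as in (\ref{eqMulti}). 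Transporting everything through the canonical isomorphism $H(A)\cong H^\bullet(X,\bC)$, one obtains an extra grading on $H^\bullet(X,\bC)$ whose associated increasing filtration is, by item iii) of Theorem \ref{thrmCH}, exactly the weight filtration; and $\mu:=(0,\nu_2,\nu_3,\ldots)$ is an $A_\infty$ structure on $H^\bullet(X,\bC)$ all of whose operations respect this multigrading.

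Third, it remains to verify that $\mu$ lies in the \emph{canonical} homotopy class, i.e. that $(H^\bullet(X,\bC),\mu)$ is $A_\infty$ equivalent to $\Omega_{{\rm DR}}(X)$. Since $A$ is quasi-isomorphic (possibly through a zig-zag) to $\Omega_{{\rm DR}}(X)$ as cdga's, hence as dga's, hence as $A_\infty$ algebras, and since $\psi$ is an $A_\infty$ quasi-isomorphism $H\xrightarrow{\sim}A$, composition (or the evident zig-zag) exhibits $(H^\bullet(X,\bC),\mu)$ as a Kadeishvili minimal model of $\Omega_{{\rm DR}}(X)$. By homotopy uniqueness of minimal $A_\infty$ models --- two such models of quasi-isomorphic dga's are connected by an $A_\infty$ quasi-isomorphism, which is automatically an $A_\infty$ isomorphism because the underlying complexes have zero differential --- the structure $\mu$ is $A_\infty$ isomorphic to any representative of the canonical homotopy class, and the grading-compatibility survives this identification vacuously since it is a property of $\mu$ itself.

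The main obstacle is exactly this last step: making precise that transferring along the Cirici--Horel quasi-isomorphism does not move us out of the homotopy class obtained by transferring directly from $\Omega_{{\rm DR}}(X)$. What is needed is the functoriality up to $A_\infty$ homotopy of Kadeishvili's transfer along quasi-isomorphisms of dga's --- so that the homotopy class of the minimal model depends only on the quasi-isomorphism type of the source --- after which Corollary \ref{corKgr} supplies the grading-compatible representative for free.
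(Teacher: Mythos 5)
Your proposal is correct and follows essentially the same route as the paper: the paper's proof is precisely ``apply Corollary \ref{corKgr} to the Cirici--Horel cdga of Theorem \ref{thrmCH},'' which is your first two steps. Your third step, checking that the transferred structure lies in the canonical homotopy class via invertibility of $A_\infty$ quasi-isomorphisms between minimal models, is left implicit in the paper but is the standard fact needed to justify the word ``canonical,'' so your added care is welcome rather than a deviation.
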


\subsection{Proof of Theorem \ref{corW}.} Let $d$ be the dimension of $X$. For this proof will use the notation 
\[H^n=H^n(X,\bC)\text{ and }H=H^\ubul(X,\bC).\] 
Recall that $H^n=0$ for $n<0$ and $n>2d$, and the weight filtration satisfies 
\be\label{eqWr}
W_kH^n=\left\{
\begin{array}{rl}
0 & \text{ if }k<0,\\
H^n & \text{ if } k>2n.
\end{array}\right.
\ee

Theorem \ref{thrmStrict2} {provides an $A_\infty$ algebra structure $\mu$ on $H$, as well as an extra grading}
\[
H=\bigoplus_kH_k,
\]
which we call {\it weight decomposition}. Furthermore, this structure is compatible with $\mu$ such that
\[
W_kH^n=\bigoplus_{l\le k}H^n_l.
\]
Note that $H^n_k=0$ if $k<0$ or $k\ge 2n$, by (\ref{eqWr}).

{Although the $\Linf$ algebra structure is trivial} on $H$, the $\Linf$ module multiplication maps of $H$ on itself
$$\mu_n:H^{\otimes n-1}\otimes H\ra H$$
are given by the $A_\infty$ algebra multiplication maps, as one can see from Proposition \ref{Ainf to Linf}, Theorem \ref{L oplus M structure}, and Proposition \ref{module structure recovered}. By compatibility, the maps $\mu_n$ restrict to maps
\[
\mu^{i_1,\ldots,i_n}_{p_1,\ldots,p_n}:H^{i_1}_{p_1}\otimes\ldots\otimes H^{i_n}_{p_n}\ra H^{i_1+\ldots +i_n+2-n}_{p_1+\ldots +p_n}.
\] 

Let now $\omega\in H^1$, $m\in H^i$, and $n>2i+2$. Write
$
m=\sum_{k=0}^{2i} m_k
$
for with $m_k$ in $H_k^i$. Similarly, we have
$
\omega=\omega_1 +\omega_2
$
with $\omega_k$ in $H^1_k$. Note that $H^1_0=0$ since we assume that $W_0H^1=0$. Then every non-zero term in the weight decomposition of $\omega^{\otimes n}\otimes m$ has degree at least $n$, which is the degree of $\omega_1^{\otimes n}\otimes m_0$. Therefore the same is true for $\mu_n(\omega^{\otimes n}\otimes m)$, by compatibility of $\mu_n$ with the weight decomposition. However, $\mu_n(\omega^{\otimes n}\otimes m)$ is an element of $H^{i+1}$, and $H^{i+1}_n=0$ since $n>2(i+1)$. Hence, $\mu_n(\omega^{\otimes n}\otimes m)=0$.

In particular, $
\mu_{n+1}(\omega,\ldots,\omega,m)=0$
for  $\omega\in H^1$, $m\in H$, and $n>4\dim(X)+2$. The claim follows  from Theorem \ref{thrmLoc} (1).
$\hfill\Box$

\subsection{Proof of Theorems \ref{thrmLink} and \ref{thrmMFiber}.} By Remark \ref{rmkLinkCH},  Theorem \ref{thrmCH} is true for each connected component of a link $\cL$, or of a  Milnor fiber $F$, in place of $X$. The rest of the proof is the similar to that of Theorem \ref{corW}. \hfill $\Box$

\subsection{The condition $W_0H^1=0$.}

\begin{rmk}\label{rmkW} Let $X$ be a complex algebraic variety. If the singular locus of $X$ is an isolated point, then
$
W_0H^1(X,\bC)=\tilde{H}^0(\Delta(E),\bC),
$
where $\Delta(E)$ is the dual complex of the exceptional divisor in any resolution of singularities of $X$, see \cite{Pe}.  For a compact variety $X$, $W_0H^1(X,\bC)=H^1(\Delta(X^\bullet),\bC)$ where $\Delta(X^\bullet)$ is the dual complex (or nerve) of a simplicial resolution $X^\bullet$ of $X$ \cite{Ara}.

In general, $
W_0H^1(X,\bC)= H^1(X^{an},\bC), 
$
where $X^{an}$ is the Berkovich analytification of $X$, by \cite{Be}. A down-to-earth topological description for equidimensional varieties has been given by \cite{MS}, where it is also shown that  $W_0H^1(X,\bC)$ is the kernel of $H^1(X,\bC)\ra H^1(\tilde{X},\bC)$ given by the normalization map $\tilde{X}\ra X$.

\end{rmk}

\begin{example}
\label{exBK} Let $X\ra Y$ be a surjective morphism of complex varieties which is a fiber bundle with connected fiber $F$. {Suppose} that $H^1(Y,\bC)\ne 0$ has positive weights and the same for $H^0(Y, {\mathcal{H}^1(F)})$ if non-zero, where $\mathcal{H}^1(F)$ is the local system on $Y$ with fibers $H^1(F,\bC)$. Then $H^1(X,\bC)\ne 0$ but $W_0H^1(X,\bC)=0$, by the Leray-Serre spectral sequence, since the differentials in the latter are morphisms of mixed Hodge structures.

One can construct in this way examples of irreducible multi-branch complex varieties $X$ with  $H^1(X,\bC)\ne 0$ and $W_0H^1(X,\bC)=0$:

Let $C$ be the projectivization of the nodal cubic curve $\{y^2=x^2+x^3\}$ in $\bP^2$, or the union of the axes $\{xy=0\}$. Let $\tau_C$ be the involution given by multiplying the $y$ coordinate by $-1$ in the first case, and exchanging the two axes in the second case. Then $H^1(C/\tau_C,\bC)=0$. Let $E$ be an elliptic curve and fix a torsion point $P$ of order 2 on it. Denote by $\tau_E$ the involution $Q\mapsto Q+P$ of $E$. Then $H^1(E/\tau_E,\bC)=\bC^2$ is pure of weight 1 since $E/\tau_E$ is also an elliptic curve. Let $X$ be the quotient of $C\times E$ by the diagonal involution $(\tau_C,\tau_E)$. Then $X\ra E/\tau_E$ is a fiber bundle map with fiber $C$. Moreover, $H^0(E/\tau_E, \mathcal{H}^1(C))=H^1(C,\bC)^{\tau_C}=H^1(C/\tau_C,\bC)=0$, and thus $H^1(X,\bC)=H^1(E/\tau_E,\bC)=\bC^2$ has no weight-zero.

We thank J. Koll\'ar and B. Wang for these examples. See also \cite{MS}*{\S 2}.
\end{example}


\subsection{} The following three propositions on the Malcev Lie algebra of fundamental groups do not involve any deformation theory. We include them for completion.

\begin{prop}
\label{rmkADH}
Let $X$ be a connected complex variety with $W_0H^1(X,\bC)=0$.  Then the associated graded Lie algebra $Gr^W\mathfrak{g}_\bC$ with respect to the weight filtration of the complex Malcev Lie algebra of $\pi_1(X,x)$ is isomorphic to the quotient of a free Lie algebra with generators in degrees $-1$ and $-2$ by a Lie ideal generated in degrees $-2, -3,$ and $-4$. If $X$ is in addition projective, then the generators can be chosen only of degree $-1$ and the relations only of degree $-2$. 
\end{prop}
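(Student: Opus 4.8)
The strategy is to follow the argument of Arapura--Dimca--Hain \cite{ADH}*{Theorem 1.2} and observe that the only place normality is used is to guarantee $W_0H^1(X,\bC)=0$, which is now a hypothesis. The starting point is the mixed Hodge structure on the complex Malcev (pro-unipotent completion) Lie algebra $\mathfrak{g}_\bC$ of $\pi_1(X,x)$, due to Morgan in the smooth case and in general built from the multiplicative mixed Hodge diagram provided by Theorem \ref{thrmCH}. The weight filtration on $\mathfrak{g}_\bC$ is compatible with the bracket, so $\gr^W\mathfrak{g}_\bC$ is a graded Lie algebra, and by general principles (e.g. the bar construction / the 1-minimal model) its generators live in $H^1(X,\bC)^\vee$ and its relations inside $H^2(X,\bC)^\vee$ together with the cup-product map. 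Concretely, $\gr^W\mathfrak{g}_\bC$ is the quotient of the free Lie algebra on the graded dual of $H^1(X,\bC)$ by the ideal generated by the image of the dual of the cup product $H^1(X,\bC)\otimes H^1(X,\bC)\to H^2(X,\bC)$ — this is the statement that $\pi_1$ is ``1-formal up to the associated graded'' for the weight filtration, which holds for any variety.

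The point is then purely a matter of bounding weights. On $H^1(X,\bC)$ the weight filtration satisfies $W_0=0$ by hypothesis and $W_2=H^1$ in general, so $H^1$ has weights $1$ and $2$ only; dualizing, the generators of $\gr^W\mathfrak{g}_\bC$ sit in degrees $-1$ and $-2$. On $H^2(X,\bC)$ the weight filtration satisfies $W_1=0$ (varieties have no odd weights below the degree in weight $\le\deg$ in the relevant range — more precisely $W_0H^2=W_1H^2=0$ always, and in fact for a variety $W_jH^n=0$ for $j<0$, while the first possibly nonzero graded piece of $H^2$ is $\gr^W_2$) and $W_4=H^2$. Hence the relations, which are captured by the dual of $H^2$ intersected with the image of brackets of generators, live in degrees $-2,-3,-4$: a relation of weight $w$ on $H^2$ contributes in degree $-w$, and $w\in\{2,3,4\}$. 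This gives the first assertion. When $X$ is in addition projective, $H^1(X,\bC)$ is pure of weight $1$ and $H^2(X,\bC)$ is pure of weight $2$, so the generators are all in degree $-1$ and the relations all in degree $-2$, recovering the classical quadratic presentation; this last part is just the Deligne--Griffiths--Morgan--Sullivan formality of compact Kähler manifolds applied to the 1-minimal model, or equivalently the purity of the Hodge structures.

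The main obstacle — really the only nontrivial verification — is making precise the identification of $\gr^W\mathfrak{g}_\bC$ with a presented graded Lie algebra whose generators and relations are controlled by $H^1$ and $H^2$ with their weight filtrations, for a possibly singular and possibly reducible variety. For smooth varieties this is \cite{Mo}; for normal varieties it is \cite{ADH}; here one invokes the mixed Hodge diagram from \cite{CH} (Theorem \ref{thrmCH} and Remark \ref{rmkLinkCH}), whose associated 1-minimal model carries a weight filtration by construction, and one checks that the generators of the 1-minimal model are dual to $H^1$ and the relations are controlled by the (weight-filtered) map $H^1\otimes H^1\to H^2$. Since the 1-minimal model only sees $H^1$, $H^2$, and the cup product between them, and these all carry mixed Hodge structures with the stated weight bounds once $W_0H^1=0$ is imposed, the degree bounds on generators and relations follow immediately. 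I would cite \cite{ADH} for the structural statement and emphasize that the argument there uses normality only through the vanishing $W_0H^1=0$; hence it extends verbatim. By Remark \ref{rmkLinkCH}, the same reasoning applies to links and Milnor fibers, which is recorded separately in Proposition \ref{propMoLi}.
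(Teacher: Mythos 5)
Your overall route is the same as the paper's: reduce to \cite{ADH}*{Theorem 1.2} by observing that normality enters there only through the vanishing $W_0H^1(X,\bC)=0$, the key structural input being \cite{ADH}*{Lemma 4.1} --- recorded in the paper as Proposition \ref{propLL}, which presents $Gr^W\mathfrak{g}_\bC$ as $\bL(Gr^WH_1)$ modulo the ideal generated by the image of a graded map $\delta\colon Gr^WH_2\to\bL(Gr^WH_1)$. (Note that this presentation carries the hypothesis $W_0H^1=0$; it is not a statement valid ``for any variety'' as you assert in passing.) So the skeleton matches.

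There is, however, a concrete error in the weight bookkeeping you use to bound the degrees of the relations. It is \emph{not} true that $W_0H^2(X,\bC)=W_1H^2(X,\bC)=0$ for every complex variety: for instance, the union of four general planes in $\bP^3$ has dual complex homeomorphic to $S^2$, hence $W_0H^2\neq 0$, and taking the product with an elliptic curve produces an example that also satisfies the hypothesis $W_0H^1=0$. Similarly, in the projective case $H^2$ need not be pure of weight $2$ (it can have weights $0,1,2$), $H^1$ is pure of weight $1$ only because properness gives weights $\le 1$ \emph{and} the hypothesis kills weight $0$, and the Deligne--Griffiths--Morgan--Sullivan formality theorem does not apply to singular projective varieties. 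The correct reason the relations sit in degrees $-2,-3,-4$ (resp.\ $-2$) is the combination of: (i) the image of $\delta$ lies in the part of $\bL(Gr^WH_1)$ spanned by brackets of length $\ge 2$, hence in degrees $\le -2$ since the generators have degrees $-1,-2$ (resp.\ $-1$); and (ii) $\delta$ is graded and $Gr^WH_2$ is supported in degrees $\ge -4$ (resp.\ $\ge -2$ for $X$ proper), because $H^2$ has weights $\le 4$ (resp.\ $\le 2$). You do gesture at (i) when you say the relations are captured by ``the image of brackets of generators'', but as written the argument leans on the false vanishing $W_1H^2=0$; the pieces $Gr^W_0H_2$ and $Gr^W_{-1}H_2$ are killed by $\delta$ because of (i), not because they are zero. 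With that step repaired the proof goes through and agrees with the paper's.
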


The proof is the same as \cite{ADH}*{Theorem 1.2} which was stated for normal varieties. Checking this involves a chase of the arguments in {\it loc. cit.} back to a series of results of R. Hain. The main point is that \cite{ADH}*{Lemma 4.1} also holds for our setup:

\begin{prop}
\label{propLL} Let $X$ be a connected complex  variety with $W_0H^1(X,\bC)=0$. Then there is a morphism of graded vector spaces 
$$
\delta: Gr^WH_2(X,\bC)\ra \bL(Gr^WH_1(X,\bC))
$$
such that
$$
Gr^W\mathfrak{g}_\bC=\bL(Gr^WH_1(X,\bC))/\delta(Gr^WH_2(X,\bC)),
$$
where $\bL(E)$ denotes the free Lie algebra spanned by the vector space $E$.
\end{prop}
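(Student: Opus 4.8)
The plan is to follow the proof of \cite{ADH}*{Lemma 4.1} and to check that the normality hypothesis enters there only through the vanishing $W_0H^1(X,\bC)=0$ together with the existence of a cohomologically connected multiplicative mixed Hodge diagram modelling $\Omega_{{\rm DR}}(X)$; the latter is available for every complex algebraic variety by Theorem \ref{thrmCH}, and for links and Milnor fibers by Remark \ref{rmkLinkCH}. In detail, the argument runs as follows.

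First I would use Theorem \ref{thrmCH} to replace $\Omega_{{\rm DR}}(X)$ by a quasi-isomorphic cdga $A$ equipped with an extra grading $A^n=\bigoplus_pA^n_p$ compatible with the differential and the product and inducing Deligne's weight filtration on $H^\ubul(X,\bC)$; since $X$ is connected, $A^0=\bC$ sits in weight $0$. Next I would pass to the bigraded $1$-minimal model $\mathcal{M}=(\Wedge V,d)$ of $A$ in the sense of Morgan: $V$ is concentrated in degree $1$ and carries an internal weight grading, $d:V\to\Wedge^2V$ preserves weight, and $\mathcal{M}\to A$ is a bigraded $1$-quasi-isomorphism. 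The key observation is that all of $V$ lies in strictly positive weights: the initial generators form $H^1(A)\cong H^1(X,\bC)$, which has weights $\ge 1$ by the hypothesis $W_0H^1(X,\bC)=0$, while each later Hirsch generator $v$ satisfies $dv\in\Wedge^2V$, hence has weight $\ge 2$ because $d$ is weight-preserving. Consequently, writing $\mathfrak{g}_\bC$ for the complex Malcev Lie algebra of $\pi_1(X,x)$ with its weight filtration $W$ (Hain's mixed Hodge structure, computed from $\mathcal{M}$ by Morgan's theorem in the smooth case and its singular extension), the associated graded $Gr^W\mathfrak{g}_\bC$ is the honest graded Lie algebra $\Hom(V,\bC)$ with bracket dual to $d|_V$, and positivity of the weights of $V$ ensures that its graded pieces are finite-dimensional and that there is no weight-zero part.

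Then I would extract the presentation. Because $\mathcal{M}$ is the $1$-minimal model of $A\simeq\Omega_{{\rm DR}}(X)$, the structure map induces an isomorphism $H^1(\mathcal{M})\xrightarrow{\sim}H^1(X,\bC)$ and an injection $H^2(\mathcal{M})\hookrightarrow H^2(X,\bC)$ of mixed Hodge structures. Dualizing the first yields $Gr^W\mathfrak{g}_\bC/[Gr^W\mathfrak{g}_\bC,Gr^W\mathfrak{g}_\bC]\cong Gr^WH_1(X,\bC)$, so $Gr^W\mathfrak{g}_\bC$ is a quotient of the free Lie algebra $\bL(Gr^WH_1(X,\bC))$. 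Since $Gr^W\mathfrak{g}_\bC$ is a graded Lie algebra generated in weights $-1,-2$ with finite-dimensional pieces, its space of minimal defining relations is canonically $H_2(Gr^W\mathfrak{g}_\bC)=H^2(\mathcal{M})^\vee$, and the injection above makes this a weight-homogeneous quotient of $Gr^WH_2(X,\bC)$. Choosing a weight-homogeneous lift of $H_2(Gr^W\mathfrak{g}_\bC)$ to a minimal generating set of the relation ideal inside $\bL(Gr^WH_1(X,\bC))$, and precomposing it with the surjection $Gr^WH_2(X,\bC)\twoheadrightarrow H_2(Gr^W\mathfrak{g}_\bC)$, produces the sought weight-homogeneous map $\delta$, whose image generates the ideal of relations; hence $Gr^W\mathfrak{g}_\bC=\bL(Gr^WH_1(X,\bC))/\delta(Gr^WH_2(X,\bC))$.

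The main obstacle is the middle step: one must be sure that Hain's construction of the mixed Hodge structure on $\mathfrak{g}_\bC$, and the identification of $Gr^W\mathfrak{g}_\bC$ with the graded dual of the bigraded $1$-minimal model, uses only the existence of a cohomologically connected multiplicative mixed Hodge diagram for $X$ --- now guaranteed in full generality by Cirici--Horel, via Theorem \ref{thrmCH} and Remark \ref{rmkLinkCH} --- together with $W_0H^1(X,\bC)=0$, which is the sole role of normality in \cite{ADH}. Tracing \cite{ADH}*{Lemma 4.1} and the results of Hain it invokes, to confirm that no step tacitly uses unibranchness or the normalization $\widetilde X\to X$, is the delicate part; once this is checked, the same verification (via Remark \ref{rmkLinkCH}) also gives Proposition \ref{propMoLi} for connected links and Milnor fibers.
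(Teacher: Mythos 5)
Your proposal matches the paper's treatment: the paper gives no independent argument for Proposition \ref{propLL}, but simply observes that the proof of \cite{ADH}*{Lemma 4.1} goes through verbatim once one checks that normality enters only via $W_0H^1(X,\bC)=0$ and the existence of a cohomologically connected multiplicative mixed Hodge diagram (available for all varieties, links, and Milnor fibers by Morgan, Hain, Navarro, Durfee--Hain, and Cirici--Horel, cf.\ Remark \ref{rmkLinkCH}). Your unwinding of the ADH argument via the bigraded $1$-minimal model, the positivity of weights on its generators, and the presentation by $Gr^WH_1$ and $Gr^WH_2$ is a correct and more explicit rendering of exactly that chase.
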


R. Hain informed us that Proposition \ref{propLL} also holds for other spaces  such as links and Milnor fibers, due to a more general result of his on ``multiplicative mixed Hodge complexes'' \cite{Ha-red}*{Theorem 11.6}, see also Remark \ref{rmkLinkCH} above. We thank him for pointing this out. In particular, one obtains the following statement for links:

\begin{prop}
\label{propMoLi} Let $\cL$ be a  link as defined in the introduction, with $Z'=0$. Let $\cL'$ be a connected component of $\cL$ such that $W_0H^1(\cL',\bC)=0$. Assume that Proposition \ref{propLL} holds for $\cL'$ replacing $X$. Then the associated graded Lie algebra $Gr^W\mathfrak{g}_\bC$ with respect to the weight filtration of the complex Malcev Lie algebra of $\pi_1(\cL')$ is isomorphic to the quotient of a free Lie algebra with generators in degrees $-1$ and $-2$ by a Lie ideal generated in degrees $-2$ and $-3$. 
\end{prop}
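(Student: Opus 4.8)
The plan is to import the argument behind Proposition \ref{rmkADH} (that is, \cite{ADH}*{proof of Theorem 1.2}) and feed it the sharper weight bounds available for a link with $Z'=0$; it is the bound on $H^2$ that collapses the relation degrees from $\{-2,-3,-4\}$ to $\{-2,-3\}$. Write $H^k=H^k(\cL',\bC)$ and $H_k=H_k(\cL',\bC)$, and recall that $\cL'$ carries a multiplicative mixed Hodge structure on its rational homotopy type by \cite{DH} (cf.\ Remark \ref{rmkLinkCH}), so that $Gr^W\mathfrak{g}_\bC$ is a genuine Lie algebra graded by the weight. By hypothesis Proposition \ref{propLL} applies to $\cL'$, giving a weight-preserving linear map
\[
\delta\colon Gr^W H_2 \longrightarrow \bL(Gr^W H_1)
\]
together with an isomorphism of graded Lie algebras $Gr^W\mathfrak{g}_\bC \cong \bL(Gr^W H_1)/I$, where $I$ is the Lie ideal generated by $\im\delta$. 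So the generating space is $Gr^W H_1$, the relations are $\im\delta$, and the task is to locate both in the weight grading.

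First I would pin down the generators. As $Gr^W H_1=(Gr^W H^1)^\vee$, the hypothesis $W_0H^1=0$ forces $H^1$ to have weights $\ge 1$, while $W_2H^1=H^1$ holds for links by \cite{DH} (and is automatic for the $H^1$ of any space equipped with a multiplicative mixed Hodge diagram). Hence $Gr^W H_1$ lives in degrees $-1$ and $-2$, and so does the generating space of $\bL(Gr^W H_1)$: generators in degrees $-1$ and $-2$.

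Next I would bound the relations, using two facts. First, $\delta$ is \emph{decomposable}: this is part of its construction in \cite{ADH}, where it records the decomposable part of the differential of a minimal model, and it remains valid for $\cL'$ through Hain's multiplicative mixed Hodge complexes \cite{Ha-red}*{Theorem 11.6} that underlie Proposition \ref{propLL} here; concretely, $\im\delta$ lies in $[\bL(Gr^W H_1),\bL(Gr^W H_1)]$, spanned by brackets of length at least two, which — the generators being in degrees $-1$ and $-2$ — sits in degrees $\le -2$. Second, dually, $Gr^W H_2=(Gr^W H^2)^\vee$ lives in degrees $\ge -3$, because $H^2$ of such a link has weights $\le 3$; this is the genuinely link-specific input and is where $Z'=0$ enters, via the weight bounds of \cite{DH} (for complex dimension $\ge 3$ one even gets $H^2$ of weights $\le 2$, and in the surface case the bound follows from Poincar\'e duality on the compact $3$-manifold $\cL'$ and the bound on $H^1$). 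Since $\delta$ preserves weights, $\im\delta$ lands in degrees $-2$ and $-3$; hence $I$ is generated in degrees $-2$ and $-3$, as claimed. For an arbitrary complex variety only $W_4H^2=H^2$ is available, which is exactly why Proposition \ref{rmkADH} must also allow relations of degree $-4$.

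The step I expect to be the main obstacle is this last geometric one: extracting from \cite{DH} the precise statement that $H^2(\cL',\bC)$ has no weight-$4$ part once $Z'=0$, and, relatedly, confirming that the decomposability of $\delta$ genuinely transfers from \cite{ADH} to the link setting, since Proposition \ref{propLL} is being invoked here as a black box rather than reproved. Everything else is bookkeeping with the weight grading.
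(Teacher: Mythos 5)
Your proposal is correct and follows essentially the same route as the paper: run the argument of \cite{ADH}*{Theorem 1.2} through Proposition \ref{propLL}, with the generators in degrees $-1,-2$ coming from $W_0H^1=0$ and the relation degrees collapsing to $-2,-3$ because $H^2(\cL',\bC)$ has no weight-$4$ part. The only difference is sourcing: the paper extracts that weight bound from \cite{PSt}*{Theorem 6.14} rather than from \cite{DH} directly (your side remarks about the dimension $\ge 3$ case and Poincar\'e duality on a compact $3$-manifold are not needed, and the latter only applies when $Z$ is a point).
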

\begin{proof}
The proof is the same as \cite{ADH}*{Theorem 1.2}, with the only difference that $H^2(\cL',\bC)$ has weights $0$, $1$, $2$, $3$, but not $4$, see \cite{PSt}*{Theorem 6.14}.
\end{proof}

\begin{bibdiv}
\begin{biblist}

\bib{Ara}{article}{
   author={Arapura, D.},
   author={Bakhtary, P.},
   author={W\l odarczyk, J.},
   title={Weights on cohomology, invariants of singularities, and dual
   complexes},
   journal={Math. Ann.},
   volume={357},
   date={2013},
   number={2},
   pages={513--550},
}

\bib{ADH}{article}{
   author={Arapura, D.},
   author={Dimca, A.},
   author={Hain, R.},
   title={On the fundamental groups of normal varieties},
   journal={Commun. Contemp. Math.},
   volume={18},
   date={2016},
   number={4},
   pages={1550065, 17},
   issn={0219-1997},

}

\bib{Be}{article}{
   author={Berkovich, V.},
   title={An analog of Tate's conjecture over local and finitely generated
   fields},
   journal={Internat. Math. Res. Notices},
   date={2000},
   number={13},
   pages={665--680},
   issn={1073-7928},
}

\bib{BW-qproj}{article}{
   author={Budur, N.},
   author={Wang, B.},
   title={Cohomology jump loci of quasi-projective varieties},
   journal={Ann. Sci. \'Ec. Norm. Sup\'er. (4)},
   volume={48},
   date={2015},
   number={1},
   pages={227--236},
   issn={0012-9593},
}
	
\bib{BW}{article}{
   author={Budur, N.},
   author={Wang, B.},
   title={Cohomology jump loci of differential graded Lie algebras},
   journal={Compos. Math.},
   volume={151},
   date={2015},
   number={8},
   pages={1499--1528},
   issn={0010-437X},
}

\bib{BW-small-ball}{article}{
   author={Budur, N.},
   author={Wang, B.},
   title={Local systems on analytic germ complements},
   journal={Adv. Math.},
   volume={306},
   date={2017},
   pages={905--928},
   issn={0001-8708},
}

\bib{BW17}{article}{
   author={Budur, N.},
   author={Wang, B.},
   title={Cohomology jump loci of quasi-compact K\"ahler manifolds},
   journal={arXiv:1702.02186},
   date={2017},
}

\bib{BW-survey}{article}{
   author={Budur, N.},
   author={Wang, B.},
   title={Recent results on cohomology jump loci},
   conference={
      title={Hodge theory and $L^2$-analysis},
   },
   book={
      series={Adv. Lect. Math. (ALM)},
      volume={39},
      publisher={Int. Press, Somerville, MA},
   },
   date={2017},
   pages={207--243},
}

\bib{BW-abs}{article}{
   author={Budur, N.},
   author={Wang, B.},
   title={Absolute sets and the Decomposition Theorem},
   journal={arXiv:1702.06267, to appear in Ann. Sci. \'Ecole Norm. Sup.},
   date={2017},
}

\bib{CG}{article}{
   author={Cirici, J.},
   author={Guill\'en, F.},
   title={$E_1$-formality of complex algebraic varieties},
   journal={Algebr. Geom. Topol.},
   volume={14},
   date={2014},
   number={5},
   pages={3049--3079},
   issn={1472-2747},
}

\bib{CH}{article}{
   author={Cirici, J.},
   author={Horel, G.},
   title={Mixed Hodge structures and formality of symmetric monoidal functors},
   journal={arXiv:1703.06816, to appear in Ann. Sci. \'Ecole Norm. Sup.},
   date={2017},
}

\bib{ChuLaz11}{article}{
   author={Chuang, J.},
   author={Lazarev, A.},
   title={$L$-infinity maps and twistings},
   journal={Homology Homotopy Appl.},
   volume={13},
   date={2011},
   number={2},
   pages={175--195},
   issn={1532-0073},
}

\bib{Del86}{webpage}{
   author={Deligne, P.},
   title={Letter to Millson},
   url={http://www.math.umd.edu/~millson/papers/deligne.pdf},
   date={1986},
}

\bib{DimPap14}{article}{
   author={Dimca, A.},
   author={Papadima, \c S.},
   title={Non-abelian cohomology jump loci from an analytic viewpoint},
   journal={Commun. Contemp. Math.},
   volume={16},
   date={2014},
   number={4},
   pages={1350025, 47},
   issn={0219-1997},
}

\bib{DPS}{article}{
   author={Dimca, A.},
   author={Papadima, \c S.},
   author={Suciu, A.},
   title={Topology and geometry of cohomology jump loci},
   journal={Duke Math. J.},
   volume={148},
   date={2009},
   number={3},
   pages={405--457},
   issn={0012-7094},
}

\bib{DH}{article}{
   author={Durfee, A.},
   author={Hain, R.},
   title={Mixed Hodge structures on the homotopy of links},
   journal={Math. Ann.},
   volume={280},
   date={1988},
   number={1},
   pages={69--83},
   issn={0025-5831},
}

\bib{FM}{article}{
   author={Fiorenza, D.},
   author={Manetti, M.},
   title={$L_\infty$ structures on mapping cones},
   journal={Algebra Number Theory},
   volume={1},
   date={2007},
   number={3},
   pages={301--330},
   issn={1937-0652},
}

\bib{Ge}{article}{
   author={Getzler, E.},
   title={Lie theory for nilpotent $L_\infty$-algebras},
   journal={Ann. of Math. (2)},
   volume={170},
   date={2009},
   number={1},
   pages={271--301},
   issn={0003-486X},
  
}

\bib{GM88}{article}{
   author={Goldman, W. M.},
   author={Millson, J. J.},
   title={The deformation theory of representations of fundamental groups of
   compact K\"ahler manifolds},
   journal={Inst. Hautes \'Etudes Sci. Publ. Math.},
   number={67},
   date={1988},
   pages={43--96},
}

\bib{Ha-red}{webpage}{
   author={Hain, R.},
   title={The de Rham homotopy theory of complex algebraic varieties (unpublished version)},
   url={https://services.math.duke.edu/~hain/papers/big_red.pdf},
   date={1984},
 
}

\bib{Ha}{article}{
   author={Hain, R.},
   title={The de Rham homotopy theory of complex algebraic varieties. II},
   journal={$K$-Theory},
   volume={1},
   date={1987},
   number={5},
   pages={481--497},
   issn={0920-3036},
 
}

\bib{Hu}{article}{
   author={Huebschmann, J.},
   title={The Lie algebra perturbation lemma},
   conference={
      title={Higher structures in geometry and physics},
   },
   book={
      series={Progr. Math.},
      volume={287},
      publisher={Birkh\"auser/Springer, New York},
   },
   date={2011},
   pages={159--179},
}

\bib{Kad80}{article}{
   author={Kadeishvili, T. V.},
   title={The algebraic structure in the homology of an $A(\infty
   )$-algebra},
   
   journal={Soobshch. Akad. Nauk Gruzin. SSR},
   volume={108},
   date={1982},
   number={2},
   pages={249--252 (1983)},
   issn={0132-1447},
}

\bib{Kapo}{article}{
   author={Kapovich, M.},
   title={Dirichlet fundamental domains and topology of projective
   varieties},
   journal={Invent. Math.},
   volume={194},
   date={2013},
   number={3},
   pages={631--672},
   issn={0020-9910},
}
		
\bib{KaKo}{article}{
   author={Kapovich, M.},
   author={Koll\'ar, J.},
   title={Fundamental groups of links of isolated singularities},
   journal={J. Amer. Math. Soc.},
   volume={27},
   date={2014},
   number={4},
   pages={929--952},
   issn={0894-0347},
}
		
\bib{Keller}{article}{
   author={Keller, B.},
   title={Introduction to $A$-infinity algebras and modules},
   journal={Homology Homotopy Appl.},
   volume={3},
   date={2001},
   number={1},
   pages={1--35},
   issn={1532-0081},
}

\bib{Kon03}{article}{
   author={Kontsevich, M.},
   title={Deformation quantization of Poisson manifolds},
   journal={Lett. Math. Phys.},
   volume={66},
   date={2003},
   number={3},
   pages={157--216},
   issn={0377-9017},
}

\bib{Kopriva}{article}{
   author={Kop\v{r}iva, J.},
   title={On the homotopy transfer of $ A_\infty $ structures},
   journal={arXiv:1704.01857},
   date={2017},
}

\bib{Lad04}{article}{
   author={Lada, T.},
   title={$L_\infty$ algebra representations},
   note={Homotopy theory},
   journal={Appl. Categ. Structures},
   volume={12},
   date={2004},
   number={1},
   pages={29--34},
   issn={0927-2852},
}
\bib{LaMa94}{article}{
   author={Lada, T.},
   author={Markl, M.},
   title={Strongly homotopy Lie algebras},
   journal={Comm. Algebra},
   volume={23},
   date={1995},
   number={6},
   pages={2147--2161},
   issn={0092-7872},
}

\bib{Laz}{article}{
    author = {Lazarev, A.},
     title = {Maurer-{C}artan moduli and models for function spaces},
   journal = {Adv. Math.},
      volume = {235},
      date = {2013},
     pages = {296--320},
      issn = {0001-8708},
}

\bib{LodVall12}{book}{
   author={Loday, J.-L.},
   author={Vallette, B.},
   title={Algebraic operads},
   series={Grundlehren der Mathematischen Wissenschaften},
   volume={346},
   publisher={Springer, Heidelberg},
   date={2012},
   pages={xxiv+634},
   isbn={978-3-642-30361-6},
}

\bib{Man04}{article}{
   author={Manetti, M.},
   title={Lectures on deformations of complex manifolds (deformations from
   differential graded viewpoint)},
   journal={Rend. Mat. Appl. (7)},
   volume={24},
   date={2004},
   number={1},
   pages={1--183},
   issn={1120-7183},
}

\bib{M-a}{article}{
   author={Manetti, M.},
   title={Lie description of higher obstructions to deforming submanifolds},
   journal={Ann. Sc. Norm. Super. Pisa Cl. Sci. (5)},
   volume={6},
   date={2007},
   number={4},
   pages={631--659},
   issn={0391-173X},
}

\bib{Mark}{article}{
   author={Markl, M.},
   title={Transferring $A_\infty$ (strongly homotopy associative)
   structures},
   journal={Rend. Circ. Mat. Palermo (2) Suppl.},
   number={79},
   date={2006},
   pages={139--151},
}

\bib{Mo}{article}{
   author={Morgan, J.},
   title={The algebraic topology of smooth algebraic varieties},
   journal={Inst. Hautes \'Etudes Sci. Publ. Math.},
   number={48},
   date={1978},
   pages={137--204},
   issn={0073-8301},
}

\bib{Na}{article}{
   author={Navarro Aznar, V.},
   title={Sur la th\'eorie de Hodge-Deligne},

   journal={Invent. Math.},
   volume={90},
   date={1987},
   number={1},
   pages={11--76},
   issn={0020-9910},

}

\bib{Pe}{article}{
   author={Payne, S.},
   title={Boundary complexes and weight filtrations},
   journal={Michigan Math. J.},
   volume={62},
   date={2013},
   number={2},
   pages={293--322},
   issn={0026-2285},
}

\bib{PSt}{book}{
   author={Peters, C. A. M.},
   author={Steenbrink, J. H. M.},
   title={Mixed Hodge structures},
   series={Ergebnisse der Mathematik und ihrer Grenzgebiete. 3. Folge. A
   Series of Modern Surveys in Mathematics},
   volume={52},
   publisher={Springer-Verlag, Berlin},
   date={2008},
   pages={xiv+470},
 
}

 	
\bib{Sa-F}{article}{
   author={Saito, M.},
   title={Mixed Hodge modules},
   journal={Publ. Res. Inst. Math. Sci.},
   volume={26},
   date={1990},
   number={2},
   pages={221--333},
   issn={0034-5318},

}	
	
\bib{MS}{article}{
   author={Saito, M.},
   title={Weight zero part of the first cohomology of complex algebraic varieties},
   journal={arXiv:1804.03632},
   date={2018},
}

\bib{ScS}{article}{
   author={Schelssinger, M.},
   author={Stasheff, J.},
   title={Deformation theory and rational homotopy type},
   journal={arXiv:1211.1647},
   date={2012},
}

\bib{Simp-sing}{article}{
   author={Simpson, C.},
   title={Local systems on proper algebraic $V$-manifolds},
   journal={Pure Appl. Math. Q.},
   volume={7},
   date={2011},
   number={4, Special Issue: In memory of Eckart Viehweg},
   pages={1675--1759},
   issn={1558-8599},
}

\bib{St-F}{article}{
   author={Steenbrink, J. H. M.},
   title={Mixed Hodge structure on the vanishing cohomology},
   conference={
      title={Real and complex singularities},
      address={Proc. Ninth Nordic Summer School/NAVF Sympos. Math., Oslo},
      date={1976},
   },
   book={
      publisher={Sijthoff and Noordhoff, Alphen aan den Rijn},
   },
   date={1977},
   pages={525--563},

}
		
\bib{SS}{article}{
   author={Steenbrink, J. H. M.},
   author={Stevens, J.},
   title={Topological invariance of the weight filtration},
   journal={Nederl. Akad. Wetensch. Indag. Math.},
   volume={46},
   date={1984},
   number={1},
   pages={63--76},
   issn={0019-3577},
}

\bib{Su}{article}{
   author={Suciu, A.},
   title={Around the tangent cone theorem},
   conference={
      title={Configuration spaces},
   },
   book={
      series={Springer INdAM Ser.},
      volume={14},
      publisher={Springer},
   },
   date={2016},
   pages={1--39},
}

\bib{Yal16}{article}{
   author={Yalin, S.},
   title={Maurer-Cartan spaces of filtered $L_\infty$-algebras},
   journal={J. Homotopy Relat. Struct.},
   volume={11},
   date={2016},
   number={3},
   pages={375--407},
   issn={2193-8407},
}

\bib{W-cKm}{article}{
   author={Wang, B.},
   title={Torsion points on the cohomology jump loci of compact K\"ahler
   manifolds},
   journal={Math. Res. Lett.},
   volume={23},
   date={2016},
   number={2},
   pages={545--563},
   issn={1073-2780},
}

\bib{We}{article}{
   author={Weber, A.},
   title={Pure homology of algebraic varieties},
   journal={Topology},
   volume={43},
   date={2004},
   number={3},
   pages={635--644},
   issn={0040-9383},
   
}

\end{biblist}

\end{bibdiv}

\end{document}